\documentclass[11pt]{article}

\usepackage{graphicx,epsfig,amssymb,amsbsy,amsmath,amsthm,cleveref,color,epstopdf,mathtools,amsfonts,amstext,subfig,pstricks,extarrows}

\textwidth=383pt

\newcommand{\bb}[1]{\left({#1}\right)}					
\newcommand{\cc}[1]{\left\{#1\right\}}					


\newcommand{\sfrac}[2]{\mbox{$\frac{#1}{#2}$}}	
\newcommand{\hf}{\mbox{$\frac12$}}




\newcommand{\eps}{\varepsilon}

\newtheorem{theorem}{Theorem}
\newtheorem{lemma}[theorem]{Lemma}

\newtheorem{proposition}[theorem]{Proposition}
\newtheorem{definition}{Definition}

\newtheorem{remark}{Remark}

\newcommand{\sign}{\operatorname{sign}}

\newcommand{\cosec}{\operatorname{cosec}}

\newcommand{\crefs}[2]{\cref{#1} to \cref{#2}}

\crefname{theorem}{Theorem}{Theorems}
\crefname{lemma}{Lemma}{Lemmas}
\crefname{proposition}{Proposition}{Propositions}
\crefname{section}{Section}{Sections}
\crefname{figure}{Figure}{Figures}
\crefname{equation}{}{}
\Crefname{equation}{}{}

\def\eps{\varepsilon}


\begin{document}


\title{Ageing of an oscillator due to frequency switching}

\author{Carles~Bonet$^*$, Mike R. Jeffrey$^\dagger$, Pau~Mart\'in$^*$, Josep~M.~Olm$^\#$\\\\
\small $^*$Department of Mathematics, Universitat Polit\`{e}cnica de Catalunya, Spain \\
\small $^\#$Department of Mathematics \& Institute of Industrial and Control Engineering, \\
\small Universitat Polit\`{e}cnica de Catalunya, Spain\\
\small $^\dagger$Department of Engineering Mathematics, University of Bristol, UK
}



\maketitle

\begin{abstract}
If an oscillator is driven by a force that switches between two frequencies, the dynamics it exhibits depends on the precise manner of switching. Here we take a one-dimensional oscillator and consider scenarios in which switching occurs: (i) between two driving forces which have different frequencies, or (ii) as a single forcing whose frequency switches between two values. The difference is subtle, but entirely changes the long term behaviour, and concerns whether the switch can be expressed linearly or nonlinearly in terms of a discontinuous quantity (such as a sign or Heaviside step function that represents the switch between frequencies). In scenario (i) the oscillator has a stable periodic orbit, and the system 
can be described as a {\it Filippov system}. In scenario (ii) the oscillator exhibits {\it hidden dynamics}, which lies outside the theory of Filippov's systems, and causes the system to be increasingly (as time passes) dominated by sliding along the frequency-switching threshold, and in particular if periodic orbits do exist, they too exhibit sliding. We show that the behaviour persists, at least asymptotically, if the systems are regularized (i.e. if the switch is modelled as a smooth transition in the manner of (i) or (ii)). 
\end{abstract}



\section{Introduction}

The theory of piecewise-smooth dynamical systems enables us to study how systems behave at `switching thresholds' where they suffer discontinuities. Filippov showed in \cite{f88} how to study such systems by forming a differential inclusion across the discontinuity, creating a set-valued problem from which one may select a range of possible solutions. Interest has grown in whether these different possible solutions have practical relevance, whether they behave differently, and what theoretical or practical criteria can be drawn to choose between them, particularly in light of their growing range of applications (see e.g. \cite{bc08,j18book,physDspecial} for broad overviews).

To illustrate the issues that arise in looking beyond so-called {\it Filippov systems}, a simple oscillator was proposed in \cite{j15hidden,j18book} in which the appropriate choice of solutions was unclear. The model takes the form of a second order oscillator switching between two frequencies,
\begin{align}\label{3sys}
\dot y=-a y-z-\sin(\pi\omega t)\;,\qquad \dot z=y\;,
\end{align}
where the dot denotes the derivative with respect to $t$. The sinusoidal forcing switches between two frequencies, some $\omega=\omega_+$ for $ y>0$ and $\omega=\omega_-$ for $ y<0$. Solving this system either numerically or analytically proves challenging, and centers around how the discontinuity is handled.  
In particular we may express the forcing as switching between two sinusoids with different frequencies,
\begin{subequations}\label{3}
\begin{align}\label{3lin}
\sin(\pi\omega t)=\hf(1+\lambda)\sin(\pi\omega_+t)+\hf(1-\lambda)\sin(\pi\omega_-t)\;,
\end{align}
or as one sinusoid that switches between two frequencies,
\begin{align}\label{3non}
\sin(\pi\omega t)=\sin\bb{\cc{(1+\lambda)\omega_++(1-\lambda)\omega_- }\frac{\pi t}2}\;,\qquad
\end{align}
\end{subequations}
both in terms of a discontinuous quantity $\lambda=\sign( y)$, called a {\it switching multiplier}. The $\sign$ function takes values $+1$ for $ y>0$, $-1$ for $y<0$, and $\lambda\in(-1,+1)$ for $ y=0$. The two models \cref{3lin} and \cref{3non} are therefore equivalent for $y\neq0$, but, as observed in \cite{j15hidden}, they differ crucially in the dynamics they generate at $y=0$, and this utterly changes their global behaviour. 

Why this happens was left as an open challenge in \cite{j15hidden}. 
The dynamics of the oscillator is complex and highly sensitive, and is easily mis-calculated in numerical simulations. This means that the difference between the models \cref{3lin} and \cref{3non} may not easily reveal itself. Our aim here is to begin investigating the qualitative dynamical features that organize the true behaviour of the system, and to see why they make simulating it so challenging. 

Oscillators with discontinuities have been an important application in the general development of piecewise-smooth dynamics, principally in dry-friction oscillators \cite{shaw85,popp98,kp08,awrej05}, impact oscillators \cite{sh83,popp97,nord2000,wiercigroch10}, and industrial applications abound in problems such as valves \cite{valve12}, drills \cite{wiercigroch17}, and braking \cite{singh08,hoffman12}. Approaches to study these involve either hybrid systems or complimentarity constraints \cite{l04,b99,bc08}, or Filippov's convex approach \cite{bc08,kp08,krg03}. 

Filippov's approach consists of forming a differential inclusion across a discontinuity, thus creating a convex set of possible trajectories across it, yielding solutions that can cross through or slide along switching thresholds in a largely unique manner (up to certain singularities, see e.g. \cite{jc12}), and an extensive theory of their existence, uniqueness, and nonlinear dynamics now exists (see e.g. \cite{f88,bc08,krg03}). 
For systems with multiple switches, or involving nonlinear functions of a discontinuous term, something beyond Filippov's analysis is required to define a system and its solutions, as developed in \cite{j13error,j18book}. 

Solutions through a discontinuity are inescapably non-unique. Although this  is a standard result of dynamical systems theory (see e.g. \cite{f88,jm82,m07,j15hidden}), little is currently understood about what physical meaning, if indeed any, the infinity of possible solutions might have. The system \cref{3sys} was conceived to illustrate a simple situation in which the Filippov's convex method might not be the most natural, and in which this would have non-trivial consequences for the dynamics.  

To gain insight into the complex dyamics of the system \cref{3sys}-\cref{3}, we propose here to first study a simplified first order oscillator, 
\begin{align}\label{1d} 
\dot y&=-ay-\sin(\pi\omega t)\;,
\end{align}
which exhibits most of the key features that make \cref{3sys} so challenging. The original system \cref{3sys} will then be analysed in follow-up work. 

As well as being lower dimensional, the first order system \cref{1d} has the advantage of representing a simple electronic RL circuit, as shown in \cref{fig:RL}. A circuit with current $i$, driven through a resistance $R$ and inductance $L$ by an alternating current voltage source $V(t)$, satisfies the equation $V(t)=L\frac{di}{dt}+Ri$, or 
\begin{align}\label{RL}
\frac{di}{dt}=-\frac{R}{L}i+\frac{V(t)}{L}\;.
\end{align}
By letting $\frac{R}{L}=a$ and $V(t)=-L\sin (\pi\omega t)$, we obtain \cref{1d}. 
\begin{figure}[t]\centering
\includegraphics[width=0.4\textwidth]{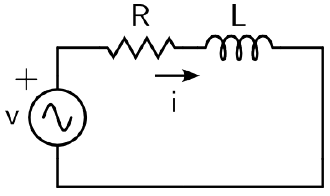}
\caption{A resistor-inductor circuit with an alternating current voltage source. }\label{fig:RL}
\end{figure}
If the frequency of the alternating current source is then switched between values $\omega_\pm$ whenever the current reverses direction (when $i$ passes through zero), the question of whether this is modelled by \cref{3lin} or \cref{3non} becomes one of the precise control feedback between the current and the frequency switch. A linear controller switching between $\omega=\omega_\pm$ with a small hysteresis will be well modelled by \cref{3lin}, while a variable frequency voltage source should be well modelled by \cref{3non}; the theoretical distinction was proven in \cite{j16hyst}. We shall see that \cref{3lin} leads to stable oscillations, while \cref{3non} causes the system to age and become unable to alternate its current, i.e. $i$ becomes fixed in $i\le0$. 

In this paper we shall show that the `linear switching' system, where the discontinuity is governed by \cref{3lin}, has relatively simple dynamics dominated by a stable period 4 orbit. For small damping parameter $a$ the period 4 orbit crosses transversally between the two frequency modes and is asymptotically stable (\cref{teo1} in \cref{sec:idnsps}). For large $a$ the period 4 orbit has a segment of sliding along the discontinuity threshold $y=0$, and nearby orbits collapse onto it in finite time (\cref{teo2} in \cref{sec:idsps}). 

The `nonlinear switching' system, where the discontinuity is governed by \cref{3non}, exhibits more complex dynamics. There are no periodic orbits that switch transversally between frequencies (\cref{inlnsps} in \cref{sec:idnl}), and indeed it becomes impossible for solutions of \cref{1d} to switch frequencies at late times (\cref{nls} in \cref{sec:idnl}). A period 4 orbit emerges (\cref{nlps} in \cref{sec:idnl}) that exists only in the lower frequency state and on the switching threshold, but under regularization is revealed to penetrate less and less deeply into the `switching layer' on each period. We can summarize these by saying that the nonlinear system exhibits:
\begin{itemize}
\item ageing -- the multiplier $\lambda$ interacts in a non-trivial way with the independent variable $t$ in \cref{3non}, such that the dynamics at $y=0$ changes qualitatively, and irreversibly, as $t$ increases. 
\item multiple timescale phenomena -- the nonlinear dependence on $\lambda$ in \cref{3non} creates intricate layering of slow manifolds governing the behaviour at $y=0$. 
\end{itemize} 
In \cref{1d} these have two main effects, meaning that the later a trajectory reaches $y =0$, the longer it may remain there, and also meaning the system asymptotes to but never exactly achieves periodicity. For the full system \cref{3sys} our preliminary work suggests that this is further complicated by relaxation oscillations and mixed mode oscillations, which will be explored in follow-up work.

Multiple timescales arise in realizing that to study solutions of a piecewise-smooth system in general, we must {regularize} the discontinuity in some way. 
In regularization, the {\it switching threshold} is blown up in some way into a {\it switching layer}, by means of which solutions transition through the discontinuity in a well-defined manner. We shall show that the distinction between the linear and nonlinear systems from \cref{3lin} and \cref{3non} remains under regularization, with the periodic orbits of the linear switching system remaining periodic (\cref{teo3} and \cref{teo4} in \cref{sec:rdl}), while in the nonlinear system there persists an aperiodic oscillation lying close to one of the original periodic orbit (\cref{lasteo} and \cref{last} in \cref{sec:rdnl}). 

The most studied method of regularization in recent years has been that of Sotomayor-Teixeira \cite{st96}, which involves smoothing out the discontinuity by replacing $\lambda$ in \cref{3} with a smooth {\it transition} function $\lambda\mapsto\phi(y/\eps)$, where $\phi$ is monotonic and differentiable for small $\eps>0$, such that $\phi(y/\eps)\rightarrow\sign(y)$ as $\eps\rightarrow0$. A more direct approach discussed at length in \cite{j15hidden} is to let $\lambda$ constitute a blow up of the variable $y$ itself, by letting $y=\eps\lambda$ for small $\eps>0$. The dynamics of the multiplier $\lambda$ can be studied on the well-defined layer interval $[-1,+1]$, while $y$ collapses to $y\in\eps[-1,+1]\rightarrow0$ as $\eps\rightarrow0$. In either situation we obtain a differentiable problem that is a singular perturbation of the discontinuous system for small $\eps>0$ (and the two methods yield topologically equivalent systems, see \cite{j18book}). To emphasize that `standard' regularization methods do not resolve the contradiction between the linear and nonlinear switching systems we shall use the more common Sotomayor-Teixeira method.

Our approach will therefore be as follows. In \cref{sec:id} we study the dynamics of the linear system obtained using \cref{3lin}, and then the nonlinear system obtained using \cref{3non}. Then in \cref{sec:rd} we study the regularization of these two systems. Some concluding remarks are made in \cref{sec:conc}.

\section{Preliminaries: dynamics of the piecewise-smooth system} \label{sec:id}

We begin by re-writing \cref{1d} as an autonomous piecewise-smooth system, 
\begin{subequations} \label{s1d}
\begin{align}
\label{s1da} \dot x&=1,
\\
\label{s1db} \dot y&=-ay-f_i(x,\lambda),
\end{align}
\end{subequations}
where $a>0$. The forcing $f_i$ takes one of the two forms from \cref{3}, in which we now set $\omega=\omega_+=3/2$ for $y>0$ and $\omega=\omega_-=1/2$ for $y<0$. The functions $f_L(x,\lambda)$ given by \cref{3lin}, and $f_R(x,\lambda)$ given by \cref{3non}, can then be more concisely written as
\begin{subequations}\label{1f}
\begin{align}
f_L(x,\lambda)&=\left[1+\left(1+\lambda\right)\cos \pi x \right]\sin \sfrac{\pi x}{2}
\;,\label{1flin}\\
f_N(x,\lambda)&=\sin\bb{\pi x(1+\hf\lambda)}
\;,\qquad\;\;\label{1fnon}
\end{align}
\end{subequations}
where $\lambda=\sign(y)$, and where $i=L$ or $N$ indices the linear or nonlinear models of switching, respectively. For $y\neq0$ these both give $f_i(x,\lambda)=\sin(\pi\omega x)$, with frequency $\omega=\omega_+$ for $y>0$ and $\omega=\omega_-$ for $y<0$. We will mainly work in terms of the constants $\omega_\pm$ rather than their numerical values for convenience. Again we specify the $\sign$ function as taking values $+1$ for $ y>0$, $-1$ for $y<0$, and $\lambda\in(-1,+1)$ for $ y=0$; the value on $y=0$ will be considered more closely later.

Let us first sketch out the key regions and features separating different modes of dynamics. We do this in \cref{sec:S}, then give expressions for solutions in the two frequency modes $\omega=\omega_\pm$ in \cref{sec:prelim}, before seeking periodic orbits of the two alternative systems described by \cref{1flin} and \cref{1fnon} in \cref{sec:idl} and \cref{sec:idnl}, respectively. 

\subsection{Regions and key features}\label{sec:S}

Let  $S_\pm$ denote the upper and lower half-planes of $(x,y)\in\mathbb{R}^2$, i.e.
\begin{equation} \label{ulhp} 
S_\pm=\left\{(x,y) \in \mathbb{R}^2\;:\; \pm y > 0 \right\}\;,\;
\end{equation}
with the boundary or {\it switching threshold} between them denoted
\begin{equation}
S_0=\left\{(x,y) \in \mathbb{R}^2\;:\; y= 0 \right\} \;.\quad
\end{equation}
We denote the closure of $S_\pm\cup S_0$ as $\overline S_\pm$. 
The switching threshold $S_0$ itself can be divided into regions where the vector fields in $S_\pm$ point towards or away from $S_0$. Since the subsystems on $S_+$ and $S_-$ are periodic, repeating every $\Delta x={4}/{3}$ and $\Delta x=4$ respectively, it is enough to define these regions on the interval $0\le x\le 4$. We define
\begin{subequations}
\begin{align}
S_{0A}=&\cc{(x,y)\in S_0\;:\;x\in(\sfrac{8}{3},\sfrac{10}{3})}\;,\\
S_{0R}=&\cc{(x,y)\in S_0\;:\;x\in(\sfrac{2}{3},\sfrac{4}{3})}\;,\\
S_{0C}=&\cc{(x,y)\in S_0\;:\;x\in(0,\sfrac{2}{3}) \cup (\sfrac{4}{3},2) \cup (2,\sfrac{8}{3}) \cup (\sfrac{10}{3},4)}\;.
\end{align}
\end{subequations}
As illustrated in \cref{fig:sign}, these portion the switching threshold into:
\begin{itemize}
\item {\it attracting regions} $S_{0A}$, where the fields in $S_\pm$ point towards $S_0$, 
\item {\it repelling regions} $S_{0R}$, where the fields in $S_\pm$ point away from $S_0$, and
\item {\it crossing regions} $S_{0C}$ everywhere else. 
\end{itemize} 
The boundaries between these are places where the vector fields in $S_\pm$ are tangent to $S_0$, i.e. where $\dot y=y=0$. 
This occurs on two sets (associated with the two frequency values $\omega=\omega_\pm$), given by
\begin{subequations}
\begin{align}
T_+&=\cc{(x,y)\in S_0\;:\;x=2n/3}\;,\\ T_-&=\cc{(x,y)\in S_0\;:\;x=2n}\;,
\end{align}
\end{subequations}
such that $\dot y=\sin(\pi\omega_+x)=0$ on $T_+$ and $\dot y=\sin(\pi\omega_-x)=0$ on $T_-$. 


\begin{figure}[t]\centering
\includegraphics[width=0.7\textwidth]{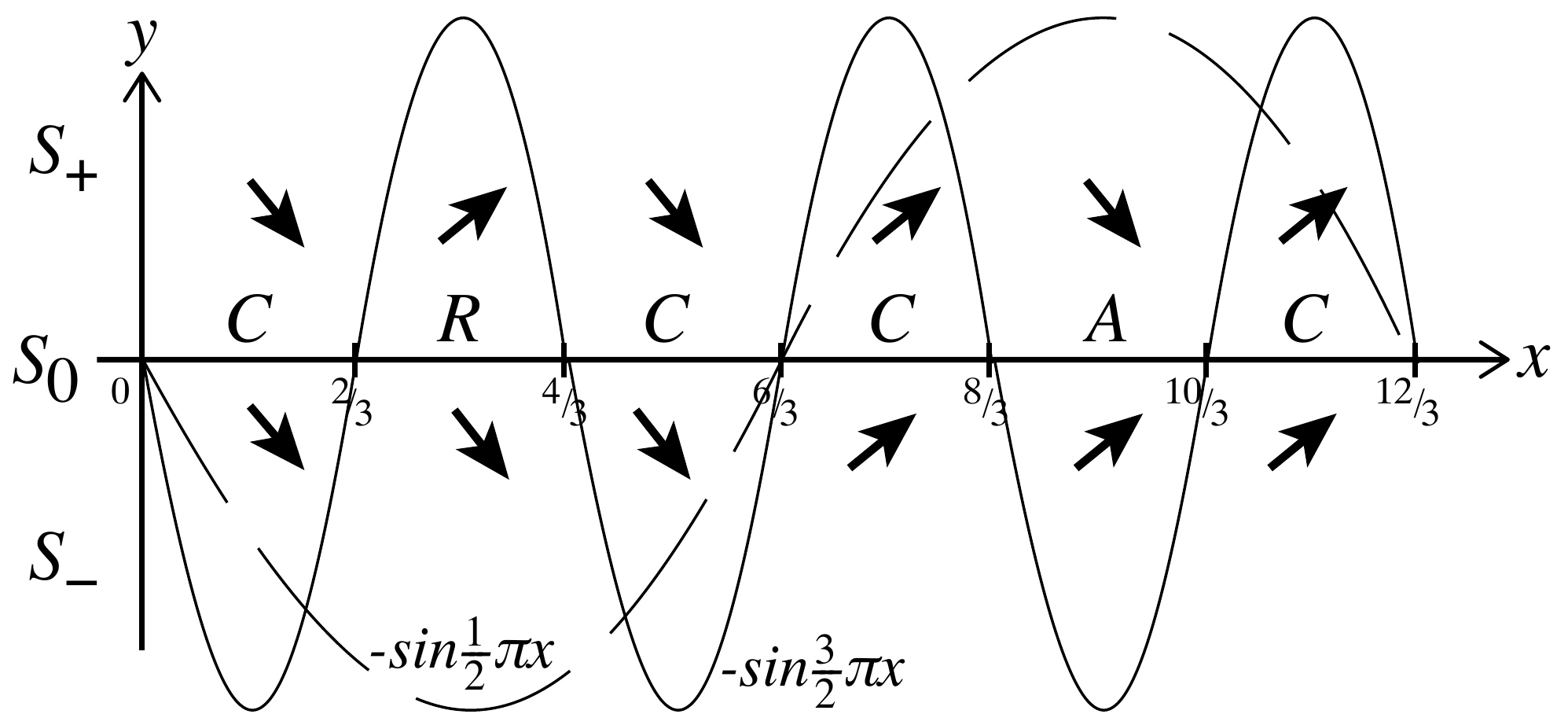}
\caption{The half-planes $S_\pm$ and the switching threshold $S_0$, including the regions of attracting ($A$) and repelling ($R$) sliding, and crossing ($C$). The function $-\sin (\omega\pi x)$ for $\omega=\hf,\sfrac32$, are also shown, along with the directions of the vector fields in $S_\pm$. }\label{fig:sign}
\end{figure} 


In the region $S_{0C}$ we can see that a solution may {\it cross} $S_0$ by entering and exiting $S_0$ from a single point. Alternatively a solution may {\it slide} along any part of $S_0$ where it can satisfy $\dot y=0$ on $y=0$, hence having dynamics
\begin{align}\label{inv}
\dot x&=1\;,\qquad y(t)=0\;,
\end{align}
We define a {\it sliding manifold} as a subset of $S_0$ satisfying
\begin{equation} \label{smi}
\Lambda^{i}:=\left\{\;(x,y) \in S_0,\; \lambda=s(x)\in(-1,+1)\;:\; f_i\left(x,s(x)\right)=0\   \right\}\;,
\end{equation}
such that at any point on $\Lambda^{i}$ there exist solutions to \cref{inv}. The superscript `$i$' indicates that the manifold is associated with either the linear or nonlinear combinations in \cref{1f}. 

In the linear switching system the sliding manifolds $\Lambda^{L}$ simply consist of the regions $S_{0A}$ and $S_{0R}$ (this is a standard result of such {\it Filippov systems}, see e.g. \cite{f88,krg03}, and can be seen by a simple application of the intermediate value theorem as we show at the start of \cref{sec:idl}). In the nonlinear system, $\Lambda^{N}$ may have multiple solution branches not only on $S_{0A}$ and $S_{0R}$, but also on $S_{0C}$.

Periodic solutions will play an important role in the oscillator so we shall make the following distinction. 

\begin{definition} \label{s_ns_ps} A periodic solution of \cref{s1d} is a {\emph {sliding periodic orbit}} if part of it lies on the sliding manifold $\Lambda^{i}$, otherwise it is a {\emph {non-sliding periodic orbit}}.
\end{definition}

These behaviours are illustrated in \cref{fig:a=2} for the linear switching system. 

\begin{figure}[t]\centering
\includegraphics[width=0.8\textwidth]{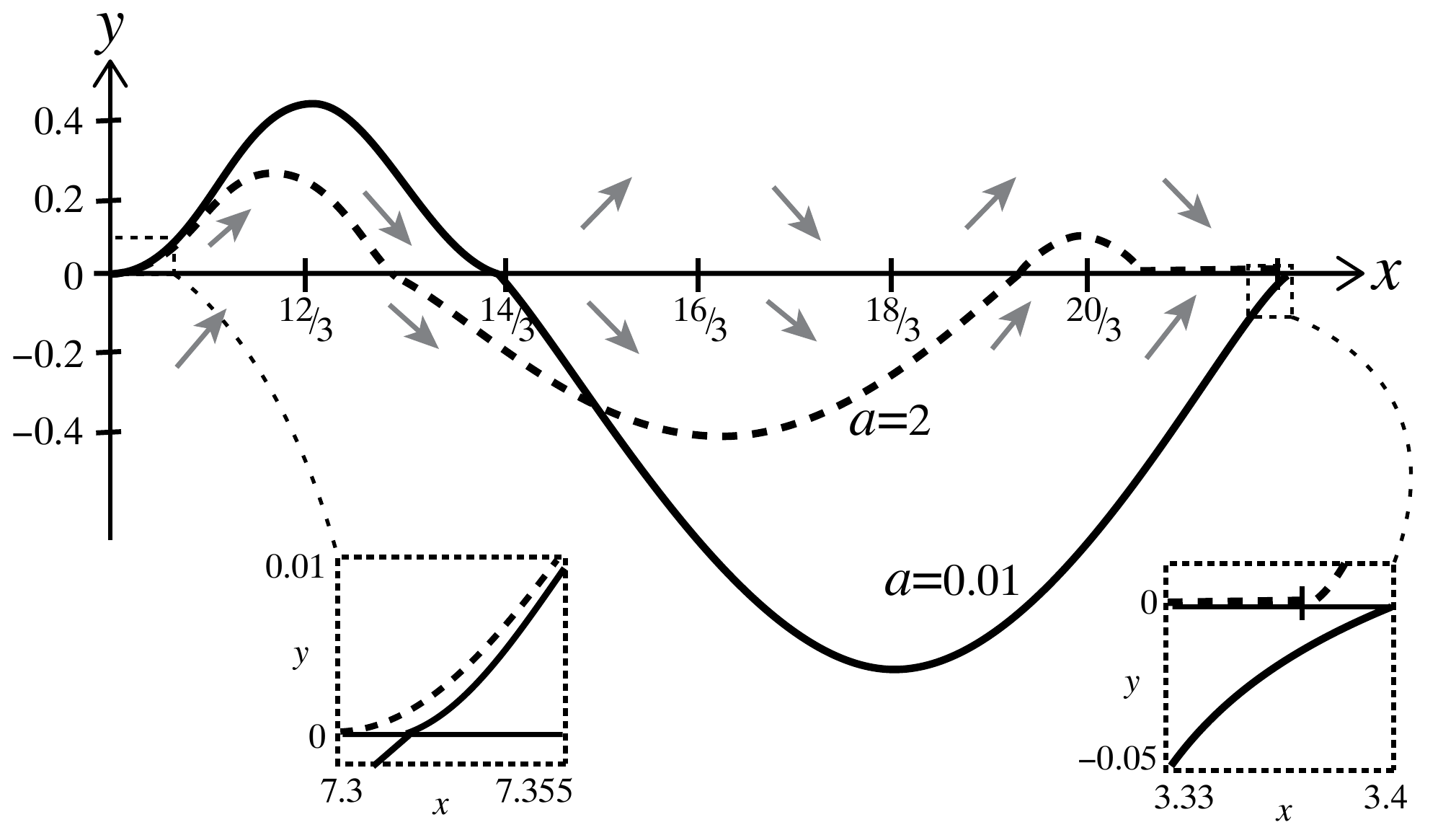}
\caption{Examples of periodic orbits for the linear switching system. A sliding $4$-periodic solution (dotted) is shown passing through point $\left(x,y\right)=\left(\sfrac{10}{3},0\right)$ which lies on the right hand side extremum of the first region $S_{0A}$ in $x>0$, and returning to the next region $S_{0A}$ after two crossings. A non-sliding $4$-periodic solution (full curve) is also shown starting at $\left(x,y\right)=\left(3.355
,0\right)$ and returns to $\left(x,y\right)=\left(7.35
,0\right)$ after one crossing (both in regions $S_{0C}$). Magnifications of the solutions at the start and end of this interval are shown. Arrows indicate the directions of the vector fields in $S_\pm$. (Values given to 4 sig. figs.)}\label{fig:a=2}
\end{figure} 

Explicit expressions can be found for the partial solutions of \cref{s1d} in the two separate half planes $S_\pm$, and we begin by finding these in \cref{sec:prelim}. These will be the same whether we use \cref{1flin} or \cref{1fnon} (and will still apply in \cref{sec:rd} to the regularized system), and we use them to find periodic orbits in \cref{sec:idl} and \cref{sec:idnl}. 
We must then add to this the dynamics on $S_0$, which we do for the linear and nonlinear switching systems respectively in \cref{sec:idl} and \cref{sec:idnl}. 

\subsection{Orbits in the two frequency modes, $S_\pm$} \label{sec:prelim}

Let $y=Y_\pm\left(x,x_i\right)$ denote a solution of \cref{s1d} that starts from a point $(x_i,0)\in S_0$, so that $Y_\pm\left(x_i,x_i\right)=0$, and the `$\pm$' denotes whether the orbit departs $S_0$ into $S_+$ or $S_-$ (orbits remaining on $S_0$ will be considered in \crefs{sec:idl}{sec:idnl}). 

The evolution in either half plane $S_\pm$ is governed simply by \cref{s1d} with $\lambda=\pm1$ in $S_\pm$. Solving in each subsystem separately gives
\begin{equation} \label{sol1}
\begin{aligned}
 Y_\pm\left(x,x_i\right)=& \frac{1}{\omega_\pm^2\pi^2+a^2}\Big[
 \;\;\omega_\pm\pi\cos(\omega_\pm\pi x)-a\sin(\omega_\pm\pi x)\;\;+\\
& \qquad 
e^{-a(x-x_i)}\cc{a\sin(\omega_\pm\pi x_i)-\omega_\pm\pi\cos(\omega_\pm\pi x_i)}\;\Big], 
\end{aligned}
\end{equation}
with $x \in \left[x_i,x_{i+1}\right]$, where $x_{i+1}$ is the next point instersection point with $S_0$, i.e. at which $Y_\pm(x_{i+1},x_i)=0$ and $x_{i+1}>x_i$. 

Setting $\bar{x}=x-x_i$ in \cref{sol1} and grouping trigonometric terms yields $Y_\pm(x_i+\bar x,x_i)=y_\pm(\bar x,x_i)$, where
\begin{equation} \label{sol3} 
y_\pm\left(\bar{x},x_i\right)=\frac{e^{-a\bar{x}}\sin \varphi_{\pm}\left(x_i\right)- \sin \left(\omega_\pm\pi\bar x+\varphi_{\pm}\left(x_i\right)\right)}{\sqrt{\omega_\pm^2\pi^2+a^2}}\;,
\end{equation}
in terms of a function
\begin{equation}\label{varphi} 
\varphi_{\pm}\left(x_i\right)=\omega_\pm\pi x_i-\phi_\pm\;, \qquad\qquad\quad
\end{equation}
and constants $\phi_\pm$ defined by
\begin{equation} \label{fik}
\tan \phi_\pm=\omega_\pm\pi/a\;, \qquad \phi_\pm \in (0,\sfrac{\pi}{2})\;.
\end{equation}

More useful than the solutions themselves is the map through $S_+$ or $S_-$ between successive contact points with $S_0$. 
Let $P_\pm^a : \ \mathbb{R} \rightarrow \mathbb{R}$
be defined as
\begin{equation} \label{ppm} 
P_\pm^a \left(x_i\right)=x_i+ \bar{x}_{i+1}\;,
\end{equation}
(then  $x_{i+1}$ satisfies $x_{i+1}=P_\pm^a \left(x_i\right)$). 
Since $a>0$, any trajectory will eventually hit $S_0$ in finite time, so there always exists a next intersection point $x_{i+1}$ following any $x_i$, and for a certain $y_\pm\left(\bar{x},x_i\right)$ it is given by
\begin{align} \label{bart}  
\bar{x}_{i+1} &= \min_{\bar{x}}\left\{\bar{x} \in \mathbb{R}^+; \ y_\pm\left(\bar{x},x_i\right)=0\right\}\nonumber\\
&=\min_{\bar{x}}\left\{\bar{x} \in \mathbb{R}^+; \ h_{\pm}\left(\bar{x},x_i\right)=0\right\}\;,
\end{align}
in terms of a function
\begin{equation} \label{h}
h_{\pm}\left(\bar{x},x_i\right)=e^{-a\bar{x}}\sin \varphi_{\pm}\left(x_i\right)- \sin \left(\omega_\pm\pi\bar x+\varphi_{\pm}\left(x_i\right)\right)\;.
\end{equation}

Although $h_{\pm}(\bar{x},x_i)=0$ is a transcendental equation and explicit solutions cannot be computed, the zeros of these functions will be useful to bound such contact points.
Let
\begin{align} \label{ha} h_{\pm}^0\left(\bar{x},x_i\right):=&\sin \varphi_{\pm}\left(x_i\right)- \sin \left(\omega_\pm\pi\bar x+\varphi_{\pm}\left(x_i\right)\right), \\
\label{hb} h_{\pm}^{\infty}\left(\bar{x},x_i\right):=& - \sin \left(\omega_\pm\pi\bar x+\varphi_{\pm}\left(x_i\right)\right).
\end{align}
Solving for $\bar x$, the zeroes of $h_{\pm}^0\left(\bar{x},x_i\right)$ lie at
\begin{align} \label{tha}
 \bar{x}= \frac{2n}{\omega_\pm} \quad{\rm or}\quad \frac{2n+1}{\omega_\pm}+\frac{2\phi_{\pm}}{\omega_\pm\pi}-2x_i\;, \qquad  n=0,1,2\ldots,
\end{align}
and the zeroes of $h_{\pm}^\infty\left(\bar{x},x_i\right)$ lie at
\begin{align} \label{thb} 
\bar{x}=\frac{n}{\omega_\pm}-\frac{\varphi_{\pm}\left(x_i\right)}{\omega_\pm\pi}=\frac{n}{\omega_\pm}+\frac{\phi_{\pm}}{\omega_\pm\pi}-x_i, \ \ n=0,1,2\ldots
\end{align}

Although we are interested only in the case $a>0$, we note that for $a=0$ the equation $h_{\pm}\left(\bar{x},x_i\right)=0$ is solvable, and implies $\phi_\pm=\sfrac{\pi}{2}$, so the map reduces to
\begin{equation} \label{P0}
P_\pm^0\left(x_i\right)=\sfrac{2}{\omega_\pm}\left(1+\left \lfloor{\omega_\pm x_i}\right \rfloor\right)-x_i
\end{equation}
where $\lfloor u\rfloor$ denotes the largest integer such that $\lfloor u\rfloor\le u$. 

\section{Linear switching} \label{sec:idl}

Let us assume that the forcing is defined by \cref{1flin}. The system \cref{s1d} can then be re-written as
\begin{subequations} \label{invlin}
\begin{align}
\label{invlina} \dot x&=1,
\\
\label{invlinb} \dot y&=-ay-\left[1+\left(1+\lambda\right)\cos \pi x \right]\sin \sfrac{\pi x}{2},
\end{align}
\end{subequations}
with $\lambda = \sign(y)$ in $S_\pm$, and $\lambda \in \left(-1,1\right)$ in $S_0$.

\begin{proposition} The sliding manifolds of \cref{invlin} are given by
\begin{equation} \label{slin} \Lambda^{L}=\left\{\left(x,0\right) \in \mathbb{R}^2\;:\; x \in \ \left(\sfrac{2}{3}+2n,\sfrac{4}{3}+2n\right), \ n \in \mathbb{N} \right\}.
\end{equation}
\end{proposition}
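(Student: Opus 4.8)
The plan is to read $\Lambda^{L}$ straight off the definition \cref{smi} using the fact that $f_L(x,\cdot)$ is \emph{affine} in $\lambda$. Fix $x$ and regard $\lambda\mapsto f_L(x,\lambda)=\big[1+(1+\lambda)\cos\pi x\big]\sin\tfrac{\pi x}{2}$ as a first‑degree polynomial in $\lambda$. Its endpoint values are $f_L(x,-1)=\sin\tfrac{\pi x}{2}$ and $f_L(x,1)=(1+2\cos\pi x)\sin\tfrac{\pi x}{2}$, and by the triple‑angle identity $\sin 3\theta=(1+2\cos 2\theta)\sin\theta$ with $\theta=\tfrac{\pi x}{2}$ these are exactly $f_L(x,\mp1)=\sin(\omega_{\mp}\pi x)$, consistent with \cref{1flin} reducing to $\sin(\omega_\pm\pi x)$ when $\lambda=\pm1$.

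Next I would characterise solvability of $f_L(x,s(x))=0$ with $s(x)\in(-1,+1)$. Because $f_L(x,\cdot)$ is affine it is either constant in $\lambda$ — precisely when $\cos\pi x=0$, and then $f_L(x,\lambda)=\sin\tfrac{\pi x}{2}\neq0$, so no sliding — or strictly monotone in $\lambda$, in which case there is a unique root $s(x)$, and it lies in the open interval $(-1,+1)$ if and only if $0$ lies strictly between $f_L(x,-1)$ and $f_L(x,1)$, i.e. $\sin(\omega_{-}\pi x)\,\sin(\omega_{+}\pi x)<0$. This is the intermediate value theorem plus monotonicity; equivalently one can just solve $1+(1+\lambda)\cos\pi x=0$ for $\lambda=-1-\sec\pi x$ and impose $-1-\sec\pi x\in(-1,+1)$, which collapses to $\cos\pi x<-\tfrac12$. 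With $\omega_{-}=\tfrac12$, $\omega_{+}=\tfrac32$ this inequality holds exactly for $x \bmod 2\in(\tfrac23,\tfrac43)$, i.e. on $\bigcup_n(\tfrac23+2n,\tfrac43+2n)$, which is \cref{slin}; a one‑period sign table on $[0,4)$ — on which $S_{0R}=(\tfrac23,\tfrac43)$ and $S_{0A}=(\tfrac83,\tfrac{10}{3})$ get picked out — makes this transparent and simultaneously shows $\Lambda^{L}=S_{0A}\cup S_{0R}$ up to the period‑$2$ (in $x$) structure.

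The one delicate point I would flag is the tangency set $x=2n$, where $\sin\tfrac{\pi x}{2}=0$ and hence $f_L(x,\lambda)\equiv0$ for \emph{every} $\lambda$: there $s(x)$ is not single‑valued. These are isolated points of $S_0$ at which both fields in $S_\pm$ are tangent to $S_0$ (they lie on $T_{-}\subset T_{+}$), so they carry no genuine sliding dynamics and belong to no one‑dimensional branch of the manifold; discarding them is exactly what the open intervals in \cref{slin} do. Apart from this bookkeeping the argument is routine — the real content is the affine dependence of $f_L$ on $\lambda$, which is what makes Filippov's convex construction (and hence the intermediate value theorem) directly applicable here, in contrast to the nonlinear case $f_N$.
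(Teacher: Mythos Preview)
Your proof is correct and follows essentially the same route as the paper: solve $1+(1+\lambda)\cos\pi x=0$ for $\lambda=-1-\sec\pi x$, impose $\lambda\in(-1,1)$ to obtain $\cos\pi x<-\tfrac12$, and discard the isolated tangency points $x=2n$ where $\sin\tfrac{\pi x}{2}=0$. Your version is slightly more careful in explicitly treating the degenerate case $\cos\pi x=0$ and in framing the argument via monotonicity and the intermediate value theorem, but the substance is identical.
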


\begin{proof} It follows from \cref{smi} and \cref{invlinb} that sliding occurs in intervals on $S_0$ where $x$ can satisfy
\begin{align}
  \left[1+\left(1+\lambda\right)\cos \pi x\right]\sin\sfrac{\pi x}{2}=0\quad{\rm for}\quad \lambda \in (-1,1)\;.
\end{align}
There are isolated solution points when $\sin\sfrac{\pi x}2=0$, i.e. at $x=2n$ for $n\in\mathbb N$, but these do not give motion along $S_0$. Assuming $\sin\sfrac{\pi x}2\neq0$, then $x$ must satisfy
\begin{align}
 -1 < \lambda=-1-{\sec \pi x} < 1
\end{align}
hence $0 < -{\sec \pi x} < 2$, implying $\cos \pi x < -\sfrac{1}{2}$, so
\begin{align}
x \in \left(\sfrac{2}{3}+2n,\sfrac{4}{3}+2n\right), \ \ n \in \mathbb{N}\;.
\end{align}
\end{proof}
Notice that the sets $\Lambda^{L}$ coincide with the regions $S_{0A}$ and $S_{0R}$ obtained in \cref{sec:id} and illustrated in \cref{fig:sign}.



Let us now seek periodic orbits that exist with or without sliding in this system, making use of these sliding manifolds on $S_0$, and the function $P$ on $S_\pm$ from \cref{ppm}. 


\subsection{Non-sliding periodic solutions} \label{sec:idnsps}


Using the functions $P_\pm^a$ from \cref{ppm}, let us define
\begin{equation} \label{Pdisc}
P(x,a):=\left(P_+^a \circ P_-^a\right)(x)\;,
\end{equation}
and also define intervals $I_-:=\left(0,\sfrac{2}{3}\right)$ and $I_+:=\left(\sfrac{10}{3},4\right)$.

\begin{lemma} \label{lem1} The mapping  $P(x,0)$ is $4$-periodic for every $x \in I_-$.
\end{lemma}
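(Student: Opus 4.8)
The plan is to compute the map $P(x,0) = (P_+^0 \circ P_-^0)(x)$ explicitly using the closed form \cref{P0}, and to show that iterating it four times returns the identity on $I_- = (0,\tfrac23)$. First I would specialize \cref{P0} to the two frequencies $\omega_- = \tfrac12$ and $\omega_+ = \tfrac32$: for a starting point $x_i \in S_0$ departing into $S_-$ the undamped return map is $P_-^0(x_i) = 4\left(1 + \lfloor x_i/2\rfloor\right) - x_i$, and for a point departing into $S_+$ it is $P_+^0(x_i) = \tfrac43\left(1 + \lfloor 3x_i/2\rfloor\right) - x_i$. The idea is purely arithmetic: track the image of a point $x\in I_-$ through the successive floors, checking at each stage which unit interval (for $\omega_+ x$ or $\omega_- x$) the current point falls into, so that each floor evaluates to a definite integer.

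Concretely, I would start with $x \in (0,\tfrac23)$, so $x/2 \in (0,\tfrac13)$, giving $\lfloor x/2\rfloor = 0$ and $x_1 := P_-^0(x) = 4 - x \in (\tfrac{10}{3},4)$. Then $3x_1/2 \in (5,6)$, so $\lfloor 3x_1/2\rfloor = 5$ and $x_2 := P_+^0(x_1) = 8 - x_1 = 4 + x \in (\tfrac{14}{3},\tfrac{16}{3})$. Next, $x_2/2 \in (\tfrac73,\tfrac83)$, so $\lfloor x_2/2\rfloor = 2$ and $x_3 := P_-^0(x_2) = 12 - x_2 = 8 - x \in (\tfrac{22}{3},8)$. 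Finally $3x_3/2 \in (11,12)$, so $\lfloor 3x_3/2\rfloor = 11$ and $x_4 := P_+^0(x_3) = 16 - x_3 = 8 + x$. Thus $P(x,0) = P(P(x,0),0)\cdots$; more precisely $P^{2}(x,0) = x_4 = x + 8$, and since both $P_\pm^0$ are by construction equivariant under $x \mapsto x + 8$ (shifting the argument by $8$ shifts the output by $8$, as $8$ is a common period: $8 = 6 \cdot \tfrac43 = 2\cdot 4$), iterating once more gives $P^{4}(x,0) = x + 8 + 8$ — wait, that would not be periodic, so I would instead phrase the conclusion correctly: $P$ itself satisfies $P(x,0) = x_2 = x + 4$... let me recheck the intended statement. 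Since the lemma asserts $P(x,0)$ is $4$-periodic, meaning $P^4 = \mathrm{id}$ on the relevant quotient, I would track the interval memberships carefully and present the orbit $x \mapsto 4-x \mapsto 4+x \mapsto 8-x \mapsto 8+x \equiv x \pmod{8}$, so that after composing $P_+^0\circ P_-^0$ twice one returns to the starting value modulo the spatial period $8$, which is exactly $4$ periods of the $x$-dynamics; I would state this as: the composed map $P(\cdot,0)$ restricted to $I_-$ produces a cycle of length two in the variable $x$ modulo $8$, equivalently a $4$-periodic orbit of the time-$1$ flow, and verify the claim by direct substitution.

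The only real obstacle is bookkeeping: one must confirm at every step that the image point lands strictly inside the open interval for which the chosen floor value is valid, so that \cref{P0} applies with that integer and no endpoint ambiguity arises; this is where I would be careful to use the open interval $I_- = (0,\tfrac23)$ rather than its closure. I would also note that $x\in I_-\subset S_{0C}$ means the trajectory genuinely crosses $S_0$ rather than sliding, so the composition $P_+^0\circ P_-^0$ is the correct return description. Once the four interval memberships are verified, the periodicity is immediate from the explicit affine formulas, and no transcendental analysis is needed since $a=0$.
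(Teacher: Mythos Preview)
Your core computation is correct and matches the paper's proof exactly: using \cref{P0} with $\omega_-=\tfrac12$ and $\omega_+=\tfrac32$, one gets $P_-^0(x)=4-x\in I_+$ for $x\in I_-$, and then $P_+^0(4-x)=4+x$, hence $P(x,0)=x+4$. That is the whole lemma.

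The confusion is only over what ``$4$-periodic'' means here. In this paper it does \emph{not} mean $P^4=\mathrm{id}$; it means that every orbit through $x\in I_-$ is a $4$-periodic orbit of the flow, i.e.\ $P(x,0)=x+4$ (since $\dot x=1$, a shift of $4$ in $x$ is one period). This is confirmed by the very next lemma, which sets $\Delta(x,a):=P(x,a)-(x+4)$ and invokes \cref{lem1} to conclude $\Delta(x,0)=0$. So your second and third iterations ($x_3,x_4$, the mod-$8$ discussion, etc.) are unnecessary and stem from a misreading of the statement; drop them and stop at $P(x,0)=4+x$. The interval-membership bookkeeping you describe for the first two steps is exactly what the paper does.
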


\begin{proof} As can be seen from \cref{fig:sign}, solutions of \cref{invlin} with initial condition $y(x_i)=0$, $x_i \in I_-$, can evolve through the half-plane $S_-$. Using \cref{P0} it follows that
\begin{equation} \label{p-} P_-^0(x)=4\left(1+\left \lfloor{\sfrac{x}{2}}\right \rfloor\right)-x=4-x \in I_+\;, \quad \forall x \in I_-\;.\end{equation}
Similarly \cref{fig:sign} indicates that solutions with initial condition $y(x_i)=0$, $x_i \in I_+$, can evolve through the half-plane $S_+$. Using \cref{P0} again it follows that
\begin{align} \nonumber P(x,0)&=P_+^0(4-x,0)=\sfrac{4}{3}\left(1+\left \lfloor{\sfrac{3(4-x)}{2}}\right \rfloor\right)-(4-x)\\ \label{p+} &=\sfrac{4}{3}\left(1+5\right)-(4-x)=4+x\;.
\end{align} 
\end{proof}

\begin{lemma} \label{lem2} There exists $a_l \in \mathbb{R}^+$, $a_l \ll 1$, such that the roots of
\begin{align}
\Delta (x,a):=P(x,a)-(x+4)=0
\end{align} define a function $x=x(a)$ for all $a \in \left(0,a_l\right)$, which satisfies $x(0)=x_0$, with $x_0$ in $I_-$ being a solution of
\begin{equation} \label{partialP}  \sfrac{32}{9\pi}+\sfrac{2x_0}{3}{\cot\frac{3\pi x_0}{2}}+\left(4-2x_0\right){\cot\frac{\pi x_0}{2}}=0\;.\end{equation}
\end{lemma}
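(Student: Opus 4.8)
The plan is to use the implicit function theorem applied to the map $\Delta(x,a) = P(x,a) - (x+4)$ at the point $(x_0,0)$, where $x_0 \in I_-$ is a nondegenerate root of $\Delta(x,0) = 0$. By \cref{lem1}, $P(x,0) = x+4$ identically on $I_-$, which would make $\Delta(x,0) \equiv 0$ and defeat a naive application; so the first and most important step is to recognize that one must differentiate $\Delta$ with respect to $a$ at $a=0$ and study the first-order-in-$a$ correction. Concretely, I would write $P(x,a) = x + 4 + a\,Q(x) + \ord{a^2}$ for a function $Q$ obtained by perturbing the exit-time equations $h_\pm(\bar x, x_i) = 0$ in the small parameter $a$, and then the equation $\Delta(x,a)=0$ for $a \neq 0$ is equivalent (after dividing by $a$) to $Q(x) + \ord{a} = 0$. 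Applying the implicit function theorem to $\widetilde\Delta(x,a) := \Delta(x,a)/a$ (smooth through $a=0$) at a simple root $x_0$ of $Q$ then yields the branch $x = x(a)$ with $x(0) = x_0$, valid for $a \in (0,a_l)$ with $a_l$ small.

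The second block of work is to compute $Q(x)$ explicitly and identify it with the left-hand side of \cref{partialP}. For this I would start from the solution formula \cref{sol1} (or equivalently \cref{sol3}) and the defining relation $Y_\pm(x_{i+1},x_i)=0$, expand $x_{i+1} = P_\pm^a(x_i)$ around its $a=0$ value $P_\pm^0(x_i)$ given by \cref{P0}, and collect the $\ord{a}$ term. Because at $a=0$ the exit point is a zero of a pure sine, the derivative of that sine (a cosine, hence a cotangent after dividing by the transversality factor) appears naturally; tracking the two legs $P_-^a$ (through $S_-$, frequency $\omega_-=1/2$, from $x$ to $4-x$) and then $P_+^a$ (through $S_+$, frequency $\omega_+=3/2$, from $4-x$ to $4+x$) and adding the two first-order contributions should produce exactly the combination $\tfrac{32}{9\pi} + \tfrac{2x_0}{3}\cot\tfrac{3\pi x_0}{2} + (4-2x_0)\cot\tfrac{\pi x_0}{2}$. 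The constant $\tfrac{32}{9\pi}$ will come from the $\ord{a}$ expansion of the amplitude/phase constants $\phi_\pm$ via \cref{fik}, since $\phi_\pm = \tfrac{\pi}{2} - a/(\omega_\pm\pi) + \ord{a^2}$.

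The third step is to check that the root $x_0$ of \cref{partialP} in $I_- = (0,2/3)$ is simple, i.e. that $Q'(x_0) \neq 0$; this is what licenses the implicit function theorem and delivers the differentiable function $x=x(a)$. I would verify this by analyzing $Q$ on $I_-$: as $x \to 0^+$ and $x \to (2/3)^-$ the cotangent terms blow up with opposite signs (note $\cot\tfrac{3\pi x}{2} \to +\infty$ as $x\to 0^+$ but $\to -\infty$ as $x \to (2/3)^-$ where $\tfrac{3\pi x}{2}\to\pi$), forcing a sign change and hence at least one root, and monotonicity of the relevant combination on $I_-$ (or a direct derivative estimate) gives uniqueness and nondegeneracy. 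One must also confirm that for small $a$ the perturbed exit points $P_\pm^a$ stay in the expected regions so that the same branch structure \cref{p-}--\cref{p+} of \cref{lem1} persists (no bifurcation of which sine-zero is selected); continuity in $a$ of the transversal crossings handles this and fixes how small $a_l$ must be.

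The main obstacle I anticipate is the bookkeeping in the perturbation expansion of the two composed return maps: one has to differentiate an implicitly defined exit time with respect to $a$, account for the $a$-dependence entering both explicitly in \cref{sol1} and implicitly through $\phi_\pm$, and do this for the composition $P_+^a \circ P_-^a$ so that the chain rule mixes the two legs. Keeping the transversality denominators (the nonzero values of $\dot y = -\sin(\omega_\pm\pi x)$ at the unperturbed exit points) straight, and confirming the constant term collapses to $\tfrac{32}{9\pi}$, is where an error is most likely to creep in; everything else is a fairly standard implicit-function-theorem argument once $Q$ is in hand.
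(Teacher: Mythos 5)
Your proposal is correct and follows essentially the same route as the paper: the paper also divides out the degenerate factor of $a$ (formalized there via the mean value theorem rather than by defining $\widetilde\Delta=\Delta/a$ directly), identifies your $Q(x)$ with $\frac{\partial P}{\partial a}(x,0)$ computed by implicit differentiation of the exit-time equations \cref{h-}--\cref{h+}, locates a root by a sign change on $I_-$ (evaluating at $x_0=\tfrac12$ and letting $x_0\to\tfrac23^-$), and verifies nondegeneracy by showing $\frac{\partial^2 P}{\partial x\partial a}(x_0,0)<0$ throughout $I_-$, which is exactly your condition $Q'(x_0)\neq0$.
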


\begin{proof} The mean value theorem implies that
\begin{equation} \label{del}  \Delta (x,a)-\Delta (x,0)=(a-0)\frac{\partial \Delta}{\partial a}\left(x,\xi(a)\right)\;, \quad 0<\xi(a)<a\;.
\end{equation}
However, as $\Delta (x,0)=P(x,0)-(x+4)=0$ by \cref{lem1}, the equation \cref{del} implies
\begin{equation} \label{del2} \Delta (x,a)=a\frac{\partial P}{\partial a}\left(x,\xi(a)\right), \quad 0<\xi(a)<a.\end{equation}
Notice also that $\xi(0)=0$, so again by the mean value theorem,
\begin{equation} \label{inmind}  \frac{\partial P}{\partial a}\left(x,\xi(a)\right)=\frac{\partial P}{\partial a}\left(x,0\right)+a\frac{\partial^2 P}{\partial a^2}\left(x,\bar \xi(a)\right), \quad 0<\bar \xi(a)<a.\end{equation}
Now, by the implicit function theorem, if there exists $x_0 \in I_-$ such that
\begin{align}\label{c12}
 \frac{\partial P}{\partial a}\left(x_0,0\right)\;=\;0\;\neq\;
\frac{\partial^2 P}{\partial x \partial a}\left(x_0,0\right)\;,
\end{align}
then there exists $a_l \in \mathbb{R}^+$, $a_l \ll 1$, such that a function $x=x(a)$ is defined in $(0,a_l)$ satisfying $x(0)=x_0$ and with $\frac{\partial P}{\partial a}\left(x(a),\xi(a)\right)=0$. Returning to \cref{del2}, the condition $ \Delta (x,a)=P(x,a)-(x+4)=0$ is thus equivalent to
\begin{align}
 \frac{\partial P}{\partial a}\left(x,\xi(a)\right)=0\quad{\rm for}\quad a > 0\;.
\end{align}
Taking into account \cref{h} and \cref{varphi}, $\frac{\partial P}{\partial a}\left(x_0,0\right)$ in \cref{c12} is to be obtained from
\begin{align} \label{h-} & h_{-}\left(P_-^a(x)-x,x\right)=0,\\
              \label{h+} & h_{+}\left(P(x,a)-P_-^a(x),P_-^a(x)\right)=0.
\end{align}
Using implicit derivation, with the relations \cref{p-} and \cref{p+}, and substituting $\omega_+=3/2$ and $\omega_-=1/2$ to simplify, some lengthy but straightforward algebra yields
\begin{align}
 \sfrac{\partial P}{\partial a}\left(x_0,0\right)=\sfrac{2}{\pi}\left(\sfrac{32}{9\pi}+\sfrac{2x_0}{3}{\cot\sfrac{3\pi x_0}{2}}+\left(4-2x_0\right){\cot\sfrac{\pi x_0}{2}}\right)\;,
\end{align}
which exists and is continuous for all $x_0 \in I_-$, and $\frac{\partial P}{\partial a}\left(x_0,0\right)=0$, $x_0 \in I_-$, if and only if \cref{partialP} is fulfilled. Moreover,
\begin{align}
\frac{2}{\pi}\frac{\partial P}{\partial a}\left(\sfrac{1}{2},0\right)=\frac{8\left(4+3\pi\right)}{9\pi}>0\;,
\end{align}
and 
\begin{align}
\lim_{x_0 \rightarrow \sfrac{2}{3}^-}\frac{2}{\pi}\frac{\partial P}{\partial a}\left(x_0,0\right)=-\infty< 0\;,
\end{align}
and Bolzano's theorem guarantees that \cref{partialP} has a solution in $\left(\sfrac{1}{2},\sfrac{2}{3}\right) \subset I_-$. 
Finally, for all $x_0 \in I_-$,
\begin{align} \sfrac{\partial^2 P}{\partial x \partial a}\left(x_0,0\right)= &
\sfrac{4}{\pi}\bb{\sfrac13\cot\sfrac{3\pi x_0}{2}
-\cot\sfrac{\pi x_0}{2}}\\&
-2x_0\cosec^2\sfrac{3\pi x_0}{2}
-(4-2x_0)\cosec^2\sfrac{\pi x_0}{2}<0\;,\end{align}
and the result follows. 
\end{proof}

\begin{lemma} \label{lem3} For all $a \in \left(0,a_l\right) \subset \mathbb{R}^+$, there exists $\mu(a) \in \mathbb{R}^+$, $0<\mu(a)\ll 1$, such that
\begin{equation} \label{partialPx} 0 <   \frac{\partial P}{\partial x} (x(a),a)  < 1, \quad \forall x \in \left(x(a)-\mu(a),x(a)+\mu(a)\right),  \end{equation}
where $x=x(a)$ denotes the function defined by $P(x,a)-(x+4)=0$.
\end{lemma}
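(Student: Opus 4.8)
The plan is to show that \cref{partialPx} follows almost directly from the computations already performed in the proof of \cref{lem2}, once one observes that $\partial P/\partial x$ sits exactly at the critical value $1$ at $a=0$, so that the statement is an $O(a)$ perturbation result.

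First I would record the consequences of \cref{lem1}. Since $P(x,0)=x+4$ is affine in $x$ on $I_-$,
\begin{equation*}
\frac{\partial P}{\partial x}(x,0)=1\;,\qquad\frac{\partial^2P}{\partial x^2}(x,0)=0\;,\qquad\forall\,x\in I_-\;.
\end{equation*}
Hence the strict inequality $\partial P/\partial x<1$ fails at $a=0$ and can hold for $a>0$ only through the first-order term in $a$. As in \cref{lem2}, $P=P_+^a\circ P_-^a$ is $C^2$ on a neighbourhood of $I_-\times\{0\}$, because each factor $P_\pm^a$ is defined implicitly by $h_\pm(\bar x,x_i)=0$ with $\partial h_\pm/\partial\bar x\neq0$ at the transversal crossing points involved (which lie in the crossing region of \cref{sec:S}); so the implicit function theorem applies, $\partial P/\partial x$, $\partial^2P/\partial x^2$ and $\partial^2P/\partial x\,\partial a$ are continuous there and computable by implicit differentiation, and $x=x(a)$ is $C^1$ on $[0,a_l)$ with $x(0)=x_0\in I_-$.

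Next I would treat the two bounds. Positivity is immediate: $\partial P/\partial x$ is continuous and equals $1>0$ at $(x_0,0)$, so there are $\mu_0>0$ and (shrinking $a_l$ if needed) a neighbourhood on which $\partial P/\partial x>0$ whenever $|x-x(a)|\le\mu_0$ and $0\le a<a_l$. For the upper bound, set $G(a):=\frac{\partial P}{\partial x}(x(a),a)$, which is $C^1$ on $[0,a_l)$ with $G(0)=1$ and
\begin{equation*}
G'(0)=\frac{\partial^2P}{\partial x^2}(x_0,0)\,x'(0)+\frac{\partial^2P}{\partial x\,\partial a}(x_0,0)=\frac{\partial^2P}{\partial x\,\partial a}(x_0,0)<0\;,
\end{equation*}
the first term vanishing by the display above and the second being precisely the negative quantity computed at the end of the proof of \cref{lem2}. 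So $G$ is strictly decreasing near $0$, and after possibly reducing $a_l$ we obtain $G(a)<1$ for all $a\in(0,a_l)$. Combining with positivity gives $0<\partial P/\partial x(x(a),a)<1$ on $(0,a_l)$, and then continuity of $(x,a)\mapsto\partial P/\partial x(x,a)$ yields, for each such $a$, a $\mu(a)\in(0,\mu_0]$ with $0<\partial P/\partial x(x,a)<1$ for all $x\in(x(a)-\mu(a),x(a)+\mu(a))$, as required.

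There is no substantial obstacle: the genuinely hard input — the explicit negative sign of $\partial^2P/\partial x\,\partial a(x_0,0)$ and the implicit-differentiation/smoothness machinery — is already in hand from \cref{lem2}. The one point needing care is that $\partial P/\partial x\equiv1$ (and so $\partial^2P/\partial x^2(x_0,0)=0$) at $a=0$, which makes the first-order-in-$a$ coefficient decisive and forces one to accept equality, rather than strict inequality, at $a=0$.
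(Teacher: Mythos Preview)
Your argument is correct and takes a genuinely different route from the paper. The paper computes $\partial P/\partial x$ explicitly via implicit differentiation of $h_\pm$, obtaining
\[
\frac{\partial P}{\partial x}(x(a),a)=\frac{3-4\sin^2\!\frac{\pi P_-^a(x(a))}{2}}{3-4\sin^2\!\frac{\pi x(a)}{2}}\,e^{-4a},
\]
then writes $P_-^a(x(a))=4-x(a)+\mu(a)$, expands the numerator to first order in this perturbation $\mu(a)$, and finally computes $\mu(a)$ itself (again from $h_-$) to show $\mu(a)\sin\pi x(a)\to0^-$ as $a\to0^+$. Your approach bypasses all of this: you observe directly from \cref{lem1} that $\partial P/\partial x(\cdot,0)\equiv 1$ on $I_-$, so that $\partial^2P/\partial x^2(x_0,0)=0$, and then the chain rule on $G(a)=\partial P/\partial x(x(a),a)$ reduces the question to the sign of $\partial^2P/\partial x\,\partial a(x_0,0)$, which was already shown negative in \cref{lem2}. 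This is cleaner and makes the logical dependence on \cref{lem2} transparent; the paper's version, on the other hand, gives quantitative control (the explicit form of $\mu(a)$ and of $\partial P/\partial x$) that could be useful if one wanted sharper estimates. One small point: your claim that $x(a)$ is $C^1$ on $[0,a_l)$ is correct, but since $\partial_x(P(x,a)-x-4)$ vanishes at $a=0$ it does not follow from the naive implicit function theorem; it follows after factoring $P(x,a)-x-4=a\,\tilde F(x,a)$ (Hadamard) and applying the theorem to $\tilde F$, exactly as is implicit in the proof of \cref{lem2}.
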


\begin{proof} Taking implicit derivation with respect to $x$ in \cref{h-}-\cref{h+} gives
\begin{align} \frac{\partial P_-^a(x)}{\partial x}&=\frac{\sin \sfrac{\pi x}{2}}{\sin \frac{\pi P_-^a(x)}{2}}e^{-a\left(P_-^a(x)-x\right)}\;, \\
\frac{\partial P(x,a)}{\partial x}&=\frac{\sin \frac{3\pi P_-^a(x)}{2}}{\sin \frac{3\pi P(x,a)}{2}}\cdot\frac{\partial P_-^a(x)}{\partial x}\cdot e^{-a\left(P(x,a)-P_-^a(x)\right)}\;.
\end{align}
Combining both expressions for $x=x(a)$, noting $P(x(a),a)=4+x(a)$, and using the sine triple-angle identity, it follows that
\begin{equation} \label{partialPxx} \frac{\partial P}{\partial x}(x(a),a)=\frac{3-4\sin^2 \frac{\pi P_-^a\left((x(a)\right)}{2}}{3-4\sin^2 \sfrac{\pi x(a)}{2}}e^{-4a}. \end{equation}
Moreover, from \cref{p-} we have that $P^0_-(x)=4-x$, hence by continuity there exists $\mu(a) \in \mathbb{R}$, $|\mu(a)|\ll 1$, such that $P^a_-(x)=4-x(a)+\mu(a)$, and \cref{partialPxx} becomes
\begin{equation}\label{onemore} \frac{\partial P}{\partial x}(x(a),a)=\frac{3-4\sin^2 \sfrac{\pi x(a)}{2}+\pi \mu(a) \sin \pi x(a)}{3-4\sin^2 \sfrac{\pi x(a)}{2}}e^{-4a}+O\left(\mu^2(a)\right).\end{equation}
Substituting $x=x(a)$ and $P^a_-(x)=4-x(a)+ \mu(a)$ into \cref{h-} results in
\begin{align} 
&e^{-a\left(4-2x(a)+ \mu(a)\right)}\left(2a\sin\sfrac{\pi x(a)}{2}-\pi\cos\sfrac{\pi x(a)}{2}\right)\nonumber\\
&=2a\sin\sfrac{\pi \left(\mu(a)-x(a)\right)}{2}-\pi\cos\sfrac{\pi \left(\mu(a)-x(a)\right)}{2}\;,
\end{align}
which can also be written as
\begin{align} 
&e^{-a\left(4-2x(a)+\mu(a)\right)}\left(2a\tan\sfrac{\pi x(a)}{2}-\pi\right)\nonumber\\
&=\left(2a-\pi \tan\sfrac{\pi x(a)}{2}\right)\tan\sfrac{\pi \mu(a)}{2}-\left(2a\tan\sfrac{\pi x(a)}{2}+\pi \right).
\end{align}
Then expanding $e^{-a\left(4-2x+\mu(a)\right)}$ and $\tan\sfrac{\pi \mu(a)}{2}$ up to first order yields
\begin{align}
  \mu(a)=-4a\frac{2+\left(2-x(a)\right)\left(\pi  \cot \sfrac{\pi x(a)}{2} -2a\right)}{\pi^2-4a^2}\;,
\end{align}
and using \cref{onemore} we obtain
\begin{align}
 \mu(a) \sin \pi x(a)=-8a\sfrac{\sin \pi x(a)+\left(2-x(a)\right)\left(\pi  \cos^2 \sfrac{\pi x(a)}{2} -a\sin \pi x(a)\right)}{\pi^2-4a^2}\;.
\end{align}
Since $x(a) \in I_-=\left(0,\sfrac{2}{3}\right)$ this therefore implies that
\begin{align}
 \mu(a) \sin \pi x(a) \rightarrow 0^- \quad \mbox{when} \ \ a \rightarrow 0^+\;,
 \end{align}
and \cref{partialPx} follows directly. 
\end{proof}


An immediate consequence of \cref{lem2} and \cref{lem3} is then the following.

\begin{theorem} \label{teo1} There exists $a_l \in \mathbb{R}^+$, $a_l \ll 1$, such that for all $a \in (0,a_l)$ system \cref{invlin} has a non-sliding periodic solution, which is $4$-periodic and locally asymptotically stable. \qed
\end{theorem}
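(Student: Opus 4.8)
The plan is to assemble \cref{teo1} from the three lemmas already in hand, treating the Poincar\'e map $P(\cdot,a)$ on the switching threshold as the central object. By \cref{lem1} the map $P(x,0)$ sends $I_-$ to $x\mapsto x+4$, so on $I_-$ the unforced-damping map is exactly a period-$4$ translation and every point of $I_-$ is (neutrally) periodic. \cref{lem2} then promotes one of these points to a genuine fixed point of the \emph{shifted} map $x\mapsto P(x,a)-4$ for small $a>0$: it produces, via the implicit function theorem applied to $\Delta(x,a)=P(x,a)-(x+4)$, a $C^1$ branch $x=x(a)$ with $x(0)=x_0\in(\tfrac12,\tfrac23)\subset I_-$ solving \cref{partialP}, and crucially the sign conditions $\partial_aP(x_0,0)=0\neq\partial^2_{xa}P(x_0,0)$ needed for that theorem have been verified. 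So for each $a\in(0,a_l)$ the point $x(a)$ is a fixed point of $P(\cdot,a)$ composed with the $-4$ shift, i.e. it generates a $4$-periodic solution of \cref{invlin}; since $x(a)\in I_-$ and $I_-$ is a crossing region $S_{0C}$, and its image $4-x(a)\in I_+$ is also a crossing region, the orbit crosses $S_0$ transversally twice and never slides, hence it is non-sliding in the sense of \cref{s_ns_ps}.

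It then remains only to read off asymptotic stability, and this is exactly what \cref{lem3} delivers: $0<\partial_xP(x(a),a)<1$ on a neighbourhood of $x(a)$, so the multiplier of the fixed point of the return map has modulus strictly less than $1$, and the standard linearization/contraction argument for maps gives local asymptotic stability of the fixed point, hence of the associated periodic orbit of the flow (the flow being smooth away from $S_0$ and the crossing being transversal, so the flow-to-map reduction is legitimate). The write-up here is genuinely short: state that $x(a)$ from \cref{lem2} yields the orbit, note $x(a)\in I_-$ so it is non-sliding and $4$-periodic by \cref{lem1}-style bookkeeping, invoke \cref{lem3} for $|\partial_xP|<1$, and conclude stability.

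The only real content to be careful about — and the place I would expect a referee to push — is the passage from ``$x(a)$ is a fixed point of the composed return map with a contracting derivative'' to ``the periodic orbit of the full Filippov system \cref{invlin} is locally asymptotically stable.'' Two points need a sentence each. First, one must know the orbit really does return to $I_-$ under iteration of $P(\cdot,a)$ and not wander into a sliding region; this follows because $x(a)$ and $4-x(a)$ lie in the open crossing intervals $I_-$ and $I_+$, and by continuity of $P$ in $x$ a whole neighbourhood of $x(a)$ is mapped through $S_-$ then $S_+$ back near $x(a)$, staying in crossing regions, so the return map is well-defined and smooth there. Second, transversality of the crossing at $S_0$ is what makes the first-return map a faithful Poincar\'e map whose hyperbolicity is equivalent to that of the periodic orbit; this is immediate since $x(a)\in I_-\subset S_{0C}$ where $\dot y\neq0$. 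With those two remarks the proof is complete, and indeed the excerpt already signals this by calling the theorem ``an immediate consequence of \cref{lem2} and \cref{lem3}'', so I would keep the argument to a few lines and not re-derive anything.
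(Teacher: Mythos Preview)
Your proposal is correct and matches the paper's approach exactly: the paper states the theorem as ``an immediate consequence of \cref{lem2} and \cref{lem3}'' with no further argument, and you have simply spelled out the natural reading of that sentence (fixed point from \cref{lem2}, non-sliding because $x(a)\in I_-\subset S_{0C}$, stability from the derivative bound in \cref{lem3}). Your additional remarks on well-definedness and transversality of the return map are sound and go slightly beyond what the paper bothers to write, but they do not change the route.
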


Numerical computations show that a solution of \cref{partialP} in $\left(0,\sfrac{2}{3}\right)$ is given by $x_0=0.6357545163..$, and that, for example, the non-sliding periodic solution for $a=0.01$ crosses $S_0$ at $x=0.6261249968..$ (to 10 significant figures, simulated in \cref{fig:a=2}).


\subsection{Sliding periodic solutions} \label{sec:idsps}

We can prove the existence of sliding periodic solutions for large values of $a$ by showing that a solution exiting the sliding interval $x\in\left(\frac{8}{3},\sfrac{10}{3}\right)$ through its right hand side extremum, arrives in the next sliding interval, $x\in\left(\frac{20}{3},\frac{22}{3}\right)$ after several crossings of $S_0$.

According to \cref{fig:sign}, a solution starting at $\left(x_0,y_0\right)=\left(\sfrac{10}{3},0\right)$ evolves on $S_+$. Using the function $P_+^a$ defined in \cref{ppm}, let
\begin{align}
 P_+^a\left(x_0\right)=P_+^a\left(\sfrac{10}{3}\right)
\end{align}
be the next crossing point of $S_0$.

\begin{lemma} \label{lems1} For all $a \in \mathbb{R}^+$ we have that
\begin{align}
 P_+^a\left(\sfrac{10}{3}\right) \in \left(4,\sfrac{14}{3}\right)\;.
\end{align}
Moreover there exist $a_1,\delta_1 \in \mathbb{R}^+$, $a_1\gg 1$, $0<\delta_1\ll 1$, such that for all $a \in \left(a_1,+\infty\right)$, we have $P_+^a\left(\sfrac{10}{3}\right) \in \left(4,\delta_1\right)$.
\end{lemma}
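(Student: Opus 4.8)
The plan is to analyze the map $P_+^a$ explicitly at the base point $x_0=10/3$ using the solution formula \cref{sol3} and the zero-finding characterization \cref{bart}, and to extract both a uniform bound and the large-$a$ asymptotics from the transcendental equation $h_+(\bar x,10/3)=0$. First I would compute $\varphi_+(10/3)=\omega_+\pi\cdot\tfrac{10}{3}-\phi_+=5\pi-\phi_+$, so $\sin\varphi_+(10/3)=\sin(5\pi-\phi_+)=\sin\phi_+>0$ and $\cos\varphi_+(10/3)=-\cos\phi_+<0$. Thus the solution departing $(10/3,0)$ enters $S_+$ (one checks $\dot y>0$ there, consistent with \cref{fig:sign}), and the return equation becomes
\begin{equation*}
h_+\left(\bar x,\tfrac{10}{3}\right)=e^{-a\bar x}\sin\phi_+-\sin\!\left(\omega_+\pi\bar x+5\pi-\phi_+\right)=0\;.
\end{equation*}
Using the bounding functions $h_+^0$ and $h_+^\infty$ from \cref{ha}-\cref{hb}, whose zeros are listed in \cref{tha}-\cref{thb}, I would sandwich the first positive root $\bar x_1$ of $h_+$. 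The zeros of $h_+^\infty(\cdot,10/3)$ are at $\bar x=\tfrac{n}{\omega_+}+\tfrac{\phi_+}{\omega_+\pi}-\tfrac{10}{3}$, and for suitable $n$ (here $n=5$) the relevant zero lies in $(0,\tfrac{2}{3\omega_+})=(0,\tfrac49)$; combined with $P_+^a(10/3)=10/3+\bar x_1$ this gives the inclusion $P_+^a(10/3)\in(4,14/3)$ for every $a>0$. The key monotonicity/sign facts needed are that on the interval in question $\sin(\omega_+\pi\bar x+\varphi_+(10/3))$ is negative just after $\bar x=0$ and the first sign change that can balance the positive decaying term $e^{-a\bar x}\sin\phi_+$ occurs before $\bar x=14/3-10/3=4/3$; a careful look at the graph (\cref{fig:sign}) confirms the first crossing is into the region $x\in(4,14/3)$.

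For the second, large-$a$ assertion, the mechanism is that as $a\to\infty$ one has $\phi_+\to\pi/2$ (from \cref{fik}), so $\sin\phi_+\to1$, while $e^{-a\bar x}$ collapses to a boundary layer of width $O(1/a)$ near $\bar x=0$. In that layer write $\bar x=s/a$; then $h_+(s/a,10/3)=e^{-s}\sin\phi_+-\sin(\tfrac{\omega_+\pi s}{a}+5\pi-\phi_+)$, and expanding the second sine for large $a$ gives $e^{-s}\sin\phi_+-\sin(5\pi-\phi_+)-\tfrac{\omega_+\pi s}{a}\cos(5\pi-\phi_+)+O(a^{-2})=\sin\phi_+\,(e^{-s}-1)+\tfrac{\omega_+\pi s}{a}\cos\phi_+ + O(a^{-2})$. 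The leading balance forces $s\to 0$, i.e. $\bar x_1=O(1/a^2)$ more precisely, and in any case $\bar x_1\to 0$ as $a\to\infty$. Hence $P_+^a(10/3)=10/3+\bar x_1$, which does NOT tend to $10/3$ but — wait — the claim $P_+^a(10/3)\in(4,\delta_1)$ must be read as $P_+^a(10/3)-4\in(0,\delta_1)$, i.e. $\bar x_1\to 2/3$ from below is wrong; rather the intended statement is that $P_+^a(10/3)$ lies just above $4$, so $\bar x_1\to 2/3$. I would reconcile this by checking which zero of $h_+^\infty$ is actually the \emph{first} positive one: since immediately after $\bar x=0$ we have $h_+(\bar x,10/3)\approx\sin\phi_+-\sin(5\pi-\phi_+)=0$ to leading order with $h_+^0$ vanishing there, the true first root is pushed to the next zero of $h_+^\infty$, which sits near $\bar x=2/3$; thus $P_+^a(10/3)\to 4^+$ and the bound $P_+^a(10/3)\in(4,\delta_1+4)$ — equivalently within $\delta_1$ of the left endpoint $4$ — follows. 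Concretely I would show $\partial_a$ of the root is negative (or directly estimate) so that for $a>a_1$ large enough the root has decreased into $(4,4+\delta_1)$.

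I would assemble the argument in this order: (1) evaluate $\varphi_+$, $\sin\varphi_+$, $\cos\varphi_+$ at $x_0=10/3$ and record the sign pattern of $h_+(\cdot,10/3)$ near $\bar x=0$; (2) use \cref{tha}-\cref{thb} to locate the zeros of $h_+^0$ and $h_+^\infty$, identify the candidate first root of $h_+$, and use a squeezing/intermediate-value argument (as in the proof of \cref{lem2}) to confine it to the interval giving $P_+^a(10/3)\in(4,14/3)$ for all $a>0$; (3) perform the $a\to\infty$ boundary-layer analysis, using $\phi_+\to\pi/2$, to show the first root converges to the value that places $P_+^a(10/3)$ within $\delta_1$ of $4$, and fix $a_1$ accordingly. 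The main obstacle I expect is step (2): correctly pinning down \emph{which} zero of the auxiliary functions is the genuine first intersection with $S_0$ (the degeneracy $h_+^0(0,10/3)=0$ means the naive first zero is spurious), and making the squeeze rigorous uniformly in $a\in(0,\infty)$ rather than just for extreme values — this requires carefully tracking the competition between the exponential term $e^{-a\bar x}\sin\phi_+$ and the oscillatory term over one period $2/\omega_+=4/3$ of the $S_+$ subsystem.
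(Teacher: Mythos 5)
Your overall strategy — sandwiching the first root $\bar x_1$ of $h_+(\cdot,10/3)$ between the roots of $h_+^\infty$ and $h_+^0$ via \cref{tha}--\cref{thb}, then taking $a\to\infty$ — is the same as the paper's, and your eventual conclusion (the genuine first root sits near $\bar x = 2/3$, not in a boundary layer near $0$) is correct. However, there are two concrete errors in how you get there.

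First, the claim that $\phi_+\to\pi/2$ as $a\to\infty$ is backwards. From \cref{fik}, $\tan\phi_+ = \omega_+\pi/a$ with $\phi_+\in(0,\pi/2)$, so as $a\to\infty$ we have $\tan\phi_+\to 0$ and therefore $\phi_+\to 0^+$; correspondingly $\sin\phi_+\to 0$, not $1$. Your entire boundary-layer analysis near $\bar x=0$ is built on $\sin\phi_+\to 1$, so the balance $\sin\phi_+(e^{-s}-1)+\tfrac{\omega_+\pi s}{a}\cos\phi_+=0$ you write down does not have the leading-order structure you describe. (It is actually doubly degenerate there, since $\sin\phi_+=O(1/a)$ makes the two terms comparable and forces $e^{-s}-1+s=O(1/a)$, whose only root is $s=0$ — so there is no additional root in the layer, which is the real reason the first genuine root is pushed out to $\bar x\approx 2/3$.) The paper instead simply notes that $e^{-a\bar x}\to 0$ drives $h_+\to h_+^\infty$, so $\bar x_1\to\bar x_\infty^+$.

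Second, the step where you identify the $n=5$ zero of $h_+^\infty(\cdot,10/3)$, $\bar x = 2\phi_+/(3\pi)\in(0,4/9)$, and then assert this gives $P_+^a(10/3)\in(4,14/3)$, is a non sequitur: if $\bar x_1$ were anywhere near $(0,4/9)$, then $P_+^a(10/3)=10/3+\bar x_1<10/3+4/9<4$, contradicting the conclusion you want. The zero of $h_+^\infty$ that actually delivers the lower bound $\bar x_1>2/3$ is the \emph{next} one, $\bar x = 2/3+2\phi_+/(3\pi)$ ($n=6$ in \cref{thb}); the small root at $2\phi_+/(3\pi)$ is where $h_+^\infty$ passes from negative to positive and therefore gives no constraint on $h_+$. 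Relatedly, your assertion that ``$\sin(\omega_+\pi\bar x+\varphi_+(10/3))$ is negative just after $\bar x=0$'' is also false: since $\sin(\omega_+\pi\bar x+5\pi-\phi_+)=\sin(\phi_+-\omega_+\pi\bar x)$ and $\phi_+>0$, this term is positive for $\bar x\in(0,2\phi_+/(3\pi))$ and only then turns negative. You do later notice the inconsistency and correctly land on ``the true first root is near $\bar x=2/3$'', which is what the paper establishes by quoting the worst-case bounds $\bar x_\infty=2/3$ and $\bar x_0=4/3$, but the intermediate claims above would need to be repaired for the argument to be rigorous (and in particular the spurious-root issue caused by the tangency at $x=10/3$, i.e. $h_+(0)=\partial_{\bar x}h_+(0)=0$, needs to be addressed directly, since it is precisely why $h_+>0$ on a neighbourhood of $0$ even though $h_+^\infty<0$ there).
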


\begin{proof} Following \cref{ppm} and \cref{ha}, let
\begin{align}
  x_1=P_+^a\left(\sfrac{10}{3}\right)=\sfrac{10}{3}+\bar{x}_1\;,
\end{align}
be the first positive $x$ at which
\begin{equation} \label{h3103} h_{+}\left(\bar{x},\sfrac{10}{3}\right)=e^{-a\bar{x}}\sin \varphi_{+}\left(\sfrac{10}{3}\right)- \sin \left(\sfrac{3\pi \bar{x}}{2}+\varphi_{+}\left(\sfrac{10}{3}\right)\right)=0\;, \end{equation}
where
\begin{equation} \label{h3103_2} \varphi_{+}\left(\sfrac{10}{3}\right)=\sfrac{3\pi \sfrac{10}{3}}{2}-\phi_+=5\pi-\phi_+ \equiv \pi-\phi_+\;.\end{equation}
We know from \cref{fik} that $\phi_+ \in \left(0,\sfrac{\pi}{2}\right)$, for all $a \in \mathbb{R}^+$, so
\begin{align}
 \varphi_{+}\left(\sfrac{10}{3}\right) \equiv \pi-\phi_+ \in \left(\sfrac{\pi}{2},\pi\right) \quad\Rightarrow \quad  \sin \varphi_{+}\left(\sfrac{10}{3}\right)> 0, \ \forall a \in \mathbb{R}^+\;.
\end{align}
Therefore
\begin{align}
  0<h^{\infty}_{+}\left(\bar{x},\sfrac{10}{3}\right) < h_{+}\left(\bar{x},\sfrac{10}{3}\right) < h_{+}^0\left(\bar{x},\sfrac{10}{3}\right)
\end{align}
in appropriate intervals, yielding $\bar{x}_\infty < \bar{x}_1 < \bar{x}_0$, where $\bar{x}_0$ and $\bar{x}_\infty$ denote the first positive zeros of $h_{+}^0\left(\bar{x},\sfrac{10}{3}\right)$ and $h_{+}^{\infty}\left(\bar{x},\sfrac{10}{3}\right)$, respectively.

Using \cref{tha} we have that
\begin{align}
 \left. \begin{array}{l} \bar{x}=\sfrac{4}{3} \ \ (n=1), \\
                           \bar{x}= \frac{2(2n+1)}{3}+\frac{4\phi_{+}}{3\pi}-2\sfrac{10}{3} \in \left(\sfrac{2}{3},\sfrac{4}{3}\right) \ \ (n=5) \end{array}
   \right\} \quad\Rightarrow \quad \bar{x}_0=\sfrac{4}{3},
\end{align}
while \cref{thb} yields
\begin{align}
 \bar{x}=\sfrac{2n}{3}+\sfrac{2\phi_{+}}{3\pi}-\sfrac{10}{3} \in \left(\sfrac{2}{3},\sfrac{4}{3}\right) \ \ (n=5) \quad\Rightarrow \quad \bar{x}_\infty=\sfrac{2}{3}\;.
\end{align}
Notice that the worst cases for the lower limit (lowest value for $\bar{x}_\infty$) and the upper limit (highest value for $\bar{x}_0$) have been selected. Then
\begin{align}
 \sfrac{2}{3} < \bar{x}_1 < \sfrac{4}{3} \quad\Rightarrow \quad 4 < x_1 < \sfrac{14}{3} \quad\Rightarrow \quad x_1 \in \left(4,\sfrac{14}{3}\right), \quad \forall a \in \mathbb{R}^+\;.
\end{align}
This also implies that
\begin{equation}                \label{ainf}  a \rightarrow +\infty \quad\Rightarrow \quad \bar{x}_1  \rightarrow \bar{x}_\infty^+=\sfrac{2}{3}^+ \quad\Rightarrow \quad x_1  \rightarrow 4^+.
\end{equation}
Then the existence of $a_1$ and $\delta_1$ are guaranteed by continuity.
\end{proof}
%

As can be seen from \cref{fig:sign}, a solution starting at $\left(x_1,0\right)$, with $x_1=P_+^a\left(\sfrac{10}{3}\right)$ $\in$ $\left(4,\sfrac{14}{3}\right)$, will evolve via $S_-$. Hence, the next crossing point of $S_0$ is
\begin{align}
 P_-^a\left(x_1\right)=\left(P_-^a \circ P_+^a \right) \left(\sfrac{10}{3}\right)\;.
\end{align}

\begin{lemma} \label{lems2} There exist $a_2,\delta_2 \in \mathbb{R}^+$, $a_2\gg 1$, $0<\delta_2\ll 1$, such that for all $a \in \left(a_2,+\infty\right)$, we have that $P_-^a\left(x\right) \in \left(6,6+\delta_2\right)$, for all $x \in \left(4,\delta_1\right)$.
\end{lemma}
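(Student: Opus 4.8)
The plan is to mirror the structure of the proof of Lemma \ref{lems1}, but now tracking a solution that departs $S_0$ into $S_-$ from a point $x_1 \in (4,\delta_1)$, i.e. just to the right of $x=4$, and showing its next return to $S_0$ lands just to the right of $x=6$. The governing object is again the transcendental equation $h_-(\bar x, x_1)=0$ with $\omega_-=1/2$, so $h_-(\bar x,x_1)=e^{-a\bar x}\sin\varphi_-(x_1)-\sin(\tfrac{\pi\bar x}{2}+\varphi_-(x_1))$, where $\varphi_-(x_1)=\tfrac{\pi x_1}{2}-\phi_-$. First I would evaluate $\varphi_-$ at $x_1\approx 4$: since $\tfrac{\pi\cdot 4}{2}=2\pi\equiv 0$ and $\phi_-\in(0,\tfrac{\pi}{2})$, we get $\varphi_-(x_1)\equiv \tfrac{\pi x_1}{2}-\phi_-$, which for $x_1$ slightly above $4$ lies slightly above $-\phi_-$, hence in $(-\tfrac{\pi}{2},0)$ for $a$ large (where $\phi_-\to\tfrac{\pi}{2}^-$ as $a\to\infty$ only for $\omega_+$; for $\omega_-$, $\tan\phi_-=\tfrac{\pi}{2a}\to 0$, so in fact $\phi_-\to 0^+$). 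Thus $\sin\varphi_-(x_1)$ is small and, crucially, I must check its sign carefully — this determines whether the sandwiching $h_-^\infty<h_-<h_-^0$ or the reverse holds, and hence whether $\bar x_1$ lies between the relevant zeros of $h_-^0$ and $h_-^\infty$.

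Next I would locate the first positive zeros $\bar x_0$ of $h_-^0(\bar x,x_1)$ and $\bar x_\infty$ of $h_-^\infty(\bar x,x_1)$ using the explicit formulas \cref{tha} and \cref{thb} with $\omega_-=\tfrac12$: the zeros of $h_-^0$ sit at $\bar x=4n$ or $\bar x=2(2n+1)+\tfrac{4\phi_-}{\pi}-2x_1$, and the zeros of $h_-^\infty$ at $\bar x=2n+\tfrac{2\phi_-}{\pi}-x_1$. With $x_1\in(4,\delta_1)$ and $\phi_-$ small, I expect the controlling zero near $\bar x=2$, giving $P_-^a(x_1)=x_1+\bar x_1\approx 6$, and a one-sided bound forcing $\bar x_1 > \bar x_\infty$ (or $<\bar x_0$) so that $x_1+\bar x_1 \in (6,6+\delta_2)$ once $a$ is large enough; the bound $\delta_2$ then comes, as in Lemma \ref{lems1}, from continuity and the limit $a\to+\infty$ driving $\bar x_1\to\bar x_\infty$ (or $\bar x_0$) exactly. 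I would also need to confirm from \cref{fig:sign} that the segment $x\in(4,6)$ on $S_0$ is crossing-type for the relevant field so that no intermediate grazing or sliding interrupts the passage through $S_-$, and that the solution genuinely stays in $S_-<0$ throughout (i.e. the first zero of $h_-$ is the first return, not a tangency).

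The main obstacle I anticipate is pinning down the sign of $\sin\varphi_-(x_1)$ and the correct chain of inequalities between $h_-,h_-^0,h_-^\infty$ uniformly for $x_1$ in the small interval $(4,\delta_1)$ and $a$ large: unlike Lemma \ref{lems1}, here $x_1$ is itself only known to lie in a shrinking neighbourhood of $4$ (it depends on $a$ via Lemma \ref{lems1}), so the estimates must be robust to that variation, and the "worst case" choices of $n$ in \cref{tha}–\cref{thb} must be made consistently to get both a lower and an upper bound on $\bar x_1$ that close up to an $O(\delta_2)$ window. Once the sign bookkeeping is settled, the remainder is the same continuity-and-limit argument used to produce $a_1,\delta_1$ in Lemma \ref{lems1}, and I would take $a_2=\max(a_1,\text{the threshold just derived})$ so that the hypothesis $x\in(4,\delta_1)$ is available.
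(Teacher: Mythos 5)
Your plan reproduces the paper's strategy faithfully — reduce to the first positive zero of $h_-(\bar x,x_1)=0$, sandwich $h_-$ between $h_-^0$ and $h_-^\infty$ according to the sign of $\sin\varphi_-(x_1)$, read off the limiting zero $\bar x_0=2$ from \cref{tha}, and finish by continuity — and you correctly identify that everything hinges on the sign of $\sin\varphi_-(x_1)$ as $a\to+\infty$, since both $\phi_-$ and $x_1-4$ tend to $0$ and the crude bound $\varphi_-\in(-\tfrac\pi2,0)$ is unavailable. But you then leave exactly that point unresolved, listing it as ``the main obstacle I anticipate.'' That obstacle is the entire content of the lemma, so as written this is an outline with a gap at its centre, not a proof.

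The paper closes the gap by feeding the defining equation of $x_1$ from \cref{lems1} back in. Writing $x_1=4+\epsilon$, so $\bar x_1=\tfrac23+\epsilon$, the equation $h_+(\bar x_1,\sfrac{10}{3})=0$ rearranges to $\tan\tfrac{3\pi\epsilon}{2}=\tfrac{3\pi}{2a}\bigl[1+e^{-a(2/3+\epsilon)}\sec\tfrac{3\pi\epsilon}{2}\bigr]\approx\tfrac{3\pi}{2a}$ up to exponentially small corrections. One then computes
$\sin\varphi_-(4+\epsilon)=\tfrac{2a\cos(\pi\epsilon/2)}{\sqrt{\pi^2+4a^2}}\bigl(\tan\tfrac{\pi\epsilon}{2}-\tfrac{\pi}{2a}\bigr)$, and substituting $\tfrac{\pi}{2a}\approx\tfrac13\tan\tfrac{3\pi\epsilon}{2}$ and using the tangent triple-angle identity gives
$\tan\tfrac{\pi\epsilon}{2}-\tfrac13\tan\tfrac{3\pi\epsilon}{2}=-\tfrac{8\tan^3(\pi\epsilon/2)}{3(1-3\tan^2(\pi\epsilon/2))}<0$.
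The sign is therefore decided only at third order in $\epsilon$ — a cancellation your ``continuity and worst-case $n$'' bookkeeping cannot detect, because to first order $\tan\tfrac{\pi\epsilon}{2}$ and $\tfrac{\pi}{2a}$ coincide. Note also that this argument is tied to the specific point $x_1=P_+^a(\sfrac{10}{3})$ (whose $\epsilon$ satisfies the relation above); the extension to the whole entry interval is then by continuity, rather than by proving a sign uniform over an arbitrary $x_1$ near $4$ as your proposal suggests. With $\sin\varphi_-(4^+)\to0^-$ established, the inequality $h_-^0<h_-$ forces $\bar x_2\to 2^+$ and hence $x_2\to6^+$, which is what places the return point just inside $(6,6+\delta_2)$.
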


\begin{proof} In accordance with \cref{ppm} and \cref{ha}, let
\begin{align}
  x_2=P_-^a\left(x_1\right)=x_1+\bar{x}_2=P_+^a\left(\sfrac{10}{3}\right)+\bar{x}_2\;,
\end{align}
with $\bar{x}_2$ being the first positive zero of
\begin{equation} \label{h1x1} h_{-}\left(\bar{x},x_1\right)=e^{-a\bar{x}}\sin \varphi_{-}\left(x_1\right)- \sin \left(\sfrac{\pi \bar{x}}{2}+\varphi_{-}\left(x_1\right)\right)=0\;, \end{equation}
where
\begin{equation} \label{h1x1_2} \varphi_{-}\left(x_1\right)=\sfrac{\pi x_1}{2}-\phi_-\;.\end{equation}
It follows from \cref{fik} and \cref{ainf} that
\begin{align}
 &a \rightarrow +\infty \quad\Rightarrow \quad \left\{ \begin{array}{l} x_1  \rightarrow 4^+ \\ \phi_- \rightarrow 0^+ \end{array} \right\} \nonumber\\
 &\Rightarrow \quad \varphi_{-}\left(4^+\right) \rightarrow 0 \quad\Rightarrow \quad \sin \varphi_{-}\left(4^+\right) \rightarrow 0\;.
 \end{align}
To bound the zeros of \cref{h1x1} for $a \rightarrow +\infty$ requires knowledge of the sign of $\sin \varphi_{-}\left(4^+\right)$. Notice that, using \cref{h3103_2}, equation \cref{h3103} applied for $\bar{x}_1$ can be equivalently written as
\begin{align}
e^{-a\bar{x}_1}\sin \phi_{+}+ \sin \left(\sfrac{3\pi \bar{x}_1}{2}-\phi_{+}\right)=0\;.
\end{align}
Now, obtaining $\sin \phi_{+}$ and $\cos \phi_{+}$ from \cref{fik}, the preceding equation is equivalent to
\begin{equation} \label{unaeq} 3\pi e^{-a\bar{x}_1}+2a \sin \sfrac{3\pi \bar{x}_1}{2}-3\pi \cos \sfrac{3\pi \bar{x}_1}{2}=0\;.\end{equation}
Letting 
\begin{align}
\bar{x}_1=x_1-\sfrac{10}{3}=4+\epsilon-\sfrac{10}{3}=\sfrac{2}{3}+\epsilon 
\end{align}
in \cref{unaeq} for $\epsilon \rightarrow 0^+$ yields
\begin{align}
  3\pi e^{-a\left(\sfrac{2}{3}+\epsilon\right)}-2a \sin \sfrac{3\pi \epsilon}{2}+3\pi \cos \sfrac{3\pi \epsilon}{2}=0\;,
\end{align}
which can be re-arranged to
\begin{align}
 \tan \sfrac{3\pi \epsilon}{2} =\sfrac{3\pi}{2a} \left[1+ {e^{-a\left(\sfrac{2}{3}+\epsilon\right)}}{\sec \sfrac{3\pi \epsilon}{2}}\right]\;.
 \end{align}
Then taking into account that
\begin{align}
 \left. \begin{array}{c} \epsilon \rightarrow 0^+ \\ a \rightarrow + \infty  \end{array} \right\} \quad\Rightarrow \quad {e^{-a\left(\sfrac{2}{3}+\epsilon\right)}}{\sec \sfrac{3\pi \epsilon}{2}} \rightarrow 0 
\end{align}
which shrinks exponentially fast with increasing $a$, we have
\begin{equation} \label{aprox} \tan \sfrac{3\pi \epsilon}{2} \approx \sfrac{3\pi}{2a} \end{equation}
at any order of $\epsilon$. Furthermore,
\begin{align} \sin \varphi_{-}\left(4^+\right) &= \sin \varphi_{-}\left(4+\epsilon\right)\nonumber\\&=\sin \left(\sfrac{\pi \left(4+\epsilon\right)}{2}-\phi_-\right) = \sin \left(\sfrac{\pi \epsilon}{2}-\phi_-\right)\nonumber\\
 &= \frac{1}{\sqrt{\pi^2+4a^2}}\left( 2a \sin \sfrac{\pi \epsilon}{2}-\pi \cos \sfrac{\pi \epsilon}{2} \right)\nonumber\\&=\frac{2a\cos \frac{\pi \epsilon}{2}}{\sqrt{\pi^2+4a^2}}\left(\tan \sfrac{\pi \epsilon}{2}-\sfrac{\pi}{2a} \right).
\end{align}
Now, \cref{aprox} implies
\begin{align}\label{ladabans}
  \tan \sfrac{\pi \epsilon}{2}-\sfrac{\pi}{2a} & \approx \tan \sfrac{\pi \epsilon}{2} - \sfrac{1}{3} \tan \sfrac{3\pi \epsilon}{2} \nonumber\\&= \tan \sfrac{\pi \epsilon}{2} - \frac{3\tan \frac{\pi \epsilon}{2}-\tan^3 \frac{\pi \epsilon}{2}}{3\left(1-3\tan^2 \frac{\pi \epsilon}{2}\right)}\nonumber\\  &=-\frac{8\tan^3 \frac{\pi \epsilon}{2}}{3\left(1-3\tan^2 \frac{\pi \epsilon}{2}\right)} < 0, \ \ \epsilon \rightarrow 0^+,
  \end{align}
yielding $\sin \varphi_{-}\left(4^+\right) \rightarrow 0^-$. 
Therefore,
\begin{align}
  h_{-}^{0} \left(\bar{x}_0,4^+\right) < h_{-}\left(\bar{x}_2,4^+\right)\;,
\end{align}
with $\bar{x}_2 \rightarrow \bar{x}_0^+$. Using again \cref{tha},
\begin{align}
&\left. \begin{array}{l} \bar{x}=4n=4 \ \ (n=1), \\
                           \bar{x}= \left. 2(2n+1)+\frac{4\phi_{1}}{\pi}-2\cdot 4 \right|_{\phi_- \rightarrow 0}=2 \ \ (n=2) \end{array}
   \right\} \nonumber\\
   &\Rightarrow \quad \bar{x}_0=2 \quad\Rightarrow \quad x_2 \rightarrow 6^+.
\end{align}
The results follows immediately by continuity.
\end{proof}

An equivalent procedure implies that for $a \rightarrow 0^+$ there do not exist sliding periodic solutions, as $\left(P_-^a \circ P_+^a \right) \left(\sfrac{10}{3}\right) \rightarrow \frac{22}{3}^+$, and $x=\frac{22}{3}^+$ belongs to the crossing region $S_{0C}$ beyond $\left(\frac{20}{3},\frac{22}{3}\right)$, which is the next sliding interval after $\left(\frac{8}{3},\sfrac{10}{3}\right)$. This result is consistent with \cref{teo1}.

According to \cref{fig:sign}, a solution starting at $\left(x_2,0\right)$ with
\begin{align}
x_2=\left(P_-^a \circ P_+^a\right)\left(\sfrac{10}{3}\right) \in \left(6,6+\delta_2\right)\;,
\end{align} will evolve via $S_+$. Hence the next crossing point of $S_0$ is
\begin{align}
 P_+^a\left(x_2\right)=\left(P_+^a \circ P_-^a \circ P_+^a \right) \left(\sfrac{10}{3}\right)\;.
\end{align}

\begin{lemma} \label{lems3} There exist $a_3,\delta_3 \in \mathbb{R}^+$, $a_3\gg 1$, $0<\delta_3\ll 1$, such that for all $a \in \left(a_3,+\infty\right)$, we have that $P_+^a\left(x\right) \in \left(\frac{20}{3},\frac{20}{3}+\delta_3\right)$ for all $x \in \left(6,6+\delta_2\right)$.
\end{lemma}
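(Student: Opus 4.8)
The plan is to run the same kind of argument as in the proofs of \cref{lems1} and \cref{lems2}. By \cref{fig:sign} the interval $(6,6+\delta_2)$ sits in a crossing region of $S_0$, so a solution issued from $(x_2,0)$ with $x_2\in(6,6+\delta_2)$ enters $S_+$, and $x_3:=P_+^a(x_2)=x_2+\bar x_3$, where $\bar x_3$ is the first positive zero of $h_+(\bar x,x_2)$ from \cref{h}, with $\varphi_+(x_2)=\tfrac{3\pi x_2}{2}-\phi_+$. The structural facts to exploit are that $6$ and $\tfrac{20}{3}$ are consecutive points of $T_+$, and that $6=\tfrac{10}{3}+2\cdot\tfrac{4}{3}$ differs from $\tfrac{10}{3}$ by two $\omega_+$-periods, so $\varphi_+(6)\equiv\varphi_+(\tfrac{10}{3})\equiv\pi-\phi_+\pmod{2\pi}$; writing $\delta:=x_2-6\in(0,\delta_2)$ this gives $\sin\varphi_+(x_2)=-\sin(\tfrac{3\pi\delta}{2}-\phi_+)$ and $\tfrac{20}{3}=x_2+(\tfrac23-\delta)$.

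First I would localise $\bar x_3$ coarsely, exactly as in the earlier lemmas. On $(6,\tfrac{20}{3})$ the $S_+$ forcing $-\sin(\tfrac{3\pi x}{2})$ is strictly positive (the interval maps into $(\pi,2\pi)$ mod $2\pi$), so the solution cannot return to $S_0$ on it; equivalently, the comparison functions $h_{+}^{0}(\cdot,x_2)$ and $h_{+}^{\infty}(\cdot,x_2)$ (see \cref{ha} and \cref{hb}) stay positive on an interval of length $\tfrac23-\delta+O(1/a)$ — whose endpoints come from \cref{tha} and \cref{thb} with $x_i=x_2$ and $\phi_+=\arctan\tfrac{3\pi}{2a}\to0^+$ as $a\to+\infty$ — and the sandwich $0<e^{-a\bar x}<1$ traps $\bar x_3$ there. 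In particular $\bar x_3\ge\tfrac23-\delta_2>0$ is bounded away from $0$, uniformly for $x_2\in(6,6+\delta_2)$ and $a>a_2$. (As in \cref{lems1} one must also check that $h_+>0$ on the \emph{whole} of $(0,\bar x_3)$, not just near $\bar x=0$, via a second-order expansion of $h_+$ at $\bar x=0$ together with the sandwich.) Since $e^{-a\bar x_3}$ is then uniformly exponentially small, the rest reduces to a neighbourhood of the tangency $\tfrac{20}{3}$: putting $\bar x=\tfrac23-\delta+\eta$ gives $\tfrac{3\pi\bar x}{2}+\varphi_+(x_2)\equiv\tfrac{3\pi\eta}{2}-\phi_+\pmod{2\pi}$, so there $h_+(\bar x,x_2)=e^{-a\bar x}\sin\varphi_+(x_2)-\sin(\tfrac{3\pi\eta}{2}-\phi_+)$ with negligible leading term, whence the zero sits at $\eta=\tfrac{2\phi_+}{3\pi}+O(e^{-ca})>0$, i.e. $x_3=\tfrac{20}{3}+\tfrac{2\phi_+}{3\pi}+O(e^{-ca})$. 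Thus $x_3-\tfrac{20}{3}\to0^+$ as $a\to+\infty$, uniformly in $x_2$, and $a_3,\delta_3$ then follow by continuity exactly as at the ends of \cref{lems1} and \cref{lems2}.

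The step I expect to be the main obstacle is pinning down that $x_3$ genuinely \emph{exceeds} $\tfrac{20}{3}$ rather than merely tends to it: to leading order in $1/a$ the solution tracks the quasi-static value $y\approx-\sin(\tfrac{3\pi x}{2})/a$, whose zero is exactly the tangency $\tfrac{20}{3}\in T_+$, so the side of $\tfrac{20}{3}$ on which the crossing lands is controlled by sub-leading terms. The positivity of the forcing gives the qualitative answer — $y(\tfrac{20}{3})=\int_{x_2}^{20/3}e^{-a(20/3-s)}\bigl(-\sin\tfrac{3\pi s}{2}\bigr)\,ds>0$ since the integrand is strictly positive on $(x_2,\tfrac{20}{3})$ — but turning this into the quantitative $x_3-\tfrac{20}{3}=\tfrac{2\phi_+}{3\pi}+O(e^{-ca})$ requires, as in \cref{lems2}, importing the fine $O(1/a)$ asymptotics of $\delta=x_2-6$ carried over from there (relations \cref{unaeq}--\cref{aprox}); the coarse localisation and the positivity-on-$(0,\bar x_3)$ check are routine but mildly case-dependent, splitting on $\sign\sin\varphi_+(x_2)$, i.e. on whether $\tfrac{3\pi\delta}{2}\lessgtr\phi_+$.
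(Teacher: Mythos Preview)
Your overall strategy matches the paper's: write $x_3=x_2+\bar x_3$, localise $\bar x_3$ via $h_+$, and show $x_3\to\tfrac{20}{3}^+$ as $a\to\infty$, then conclude by continuity. The substantive difference is in how you establish the lower bound $x_3>\tfrac{20}{3}$. The paper imports the fine asymptotics of $x_2=6+\epsilon$ from \cref{lems2} (specifically $\tan\tfrac{\pi\epsilon}{2}\approx\tfrac{\pi}{2a}$) and feeds them through the tangent triple-angle identity to pin down $\sin\varphi_+(x_2)\to0^-$; once this sign is known, the comparison $h_+^0<h_+$ gives $\bar x_3>\bar x_0=\tfrac23$ via \cref{tha}. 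Your integral argument $y(\tfrac{20}{3})=\int_{x_2}^{20/3}e^{-a(20/3-s)}(-\sin\tfrac{3\pi s}{2})\,ds>0$ achieves the same lower bound more directly, for \emph{any} $x_2\in(6,\tfrac{20}{3})$, without needing the sign of $\sin\varphi_+$ or any information carried over from \cref{lems2}. In fact this makes your last paragraph overly cautious: neither the quantitative estimate $x_3-\tfrac{20}{3}=\tfrac{2\phi_+}{3\pi}+O(e^{-ca})$ nor the positivity check on $(0,\bar x_3)$ actually requires importing the $O(1/a)$ asymptotics of $\delta$ or splitting on $\sign\sin\varphi_+(x_2)$ --- the integral positivity already gives $h_+>0$ on all of $(0,\tfrac23-\delta]$, and the exponential smallness of $e^{-a\bar x_3}$ needs only the uniform lower bound $\bar x_3\ge\tfrac23-\delta_2$. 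So your route is correct and somewhat more self-contained than the paper's; what the paper's sign computation buys is an explicit connection to the $h^0_+$/$h^\infty_+$ sandwich used throughout \crefs{lems1}{lems3}.
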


\begin{proof} Recalling \cref{ppm} and \cref{ha}, let
\begin{align}
  x_3=P_+^a\left(x_2\right)=x_2+\bar{x}_3=\left(P_-^a \circ P_+^a \right)\left(\sfrac{10}{3}\right)+\bar{x}_3\;,
\end{align}
with $x_3$ being the first positive zero of
\begin{equation} \label{h1x2} h_{+}\left(\bar{x}_3,x_3\right)=e^{-a\bar{x}_3}\sin \varphi_{+}\left(x_2\right)- \sin \left(\sfrac{3\pi \bar{x}_3}{2}+\varphi_{+}\left(x_2\right)\right)=0\;, \end{equation}
where
\begin{equation} \label{h1x2_3} \varphi_{+}\left(x_2\right)=\sfrac{3\pi x_2}{2}-\phi_+\;.\end{equation}

Again in this case, $a \rightarrow +\infty \Rightarrow \phi_+ \rightarrow 0^+$ which, combined with $x_2 \rightarrow 6^+$, yields $\varphi_{+}\left(6^+\right)=\pi^+-0^+$, thus preventing a straightforward assessment of the sign of $\sin \varphi_{+}\left(6^+\right)$. However, a procedure similar to the one used in the proof of \cref{lems2} yields $\sin \varphi_{+}\left(6^+\right) \rightarrow 0^-$. Let $\bar{x}_2=2+\epsilon$ in \cref{h1x1} with $\epsilon \rightarrow 0^+$, which implies
\begin{align}
 e^{-a\left(2+\epsilon\right)}\sin \varphi_{-}\left(4^+\right)+2a \sin \sfrac{\pi \epsilon}{2}-\pi \cos \sfrac{\pi \epsilon}{2}=0\;,
\end{align}
which re-arranges to
\begin{align}
 \tan \frac{\pi \epsilon}{2} =\frac{\pi}{2a}+ \frac{e^{-a\left(2+\epsilon\right)}}{a\cos \frac{\pi \epsilon}{2}}\left|\sin \varphi_{-}\left(4^+\right)\right|\;.
\end{align}
Then taking into account that
\begin{align}
 \left. \begin{array}{c} \epsilon \rightarrow 0^+ \\ a \rightarrow + \infty \\ \sin \varphi_{-}\left(4^+\right) \rightarrow 0  \end{array} \right\} \quad\Rightarrow \quad \frac{e^{-a\left(2+\epsilon\right)}}{a\cos \frac{\pi \epsilon}{2}}\left|\sin \varphi_{-}\left(4^+\right)\right| \rightarrow 0 
\end{align}
decreases exponentially fast with increasing $a$, we have
\begin{equation} \label{aprox2} \tan \sfrac{\pi \epsilon}{2} \approx \sfrac{\pi}{2a} \end{equation}
at any order of $\epsilon$. Furthermore,
\begin{align} \sin \varphi_{+}\left(6^+\right)  &= \sin \varphi_{+}\left(6+\epsilon\right)\nonumber\\&=\sin \left(\sfrac{3\pi \left(6+\epsilon\right)}{2}-\phi_3\right)= -\sin \left(\sfrac{3\pi \epsilon}{2}-\phi_3\right) \nonumber\\
 &= -\frac{1}{\sqrt{9\pi^2+4a^2}}\left( 2a \sin \sfrac{3\pi \epsilon}{2}-3\pi \cos \sfrac{3\pi \epsilon}{2} \right)\nonumber\\ &=\frac{6a\cos \frac{\pi \epsilon}{2}}{\sqrt{9\pi^2+4a^2}}\left(\sfrac{\pi}{2a}-\sfrac{1}{3}\tan \sfrac{3\pi \epsilon}{2} \right).
\end{align}
Now \cref{aprox2} and \cref{ladabans} imply
\begin{align} \sfrac{\pi}{2a}-\sfrac{1}{3}\tan \sfrac{3\pi \epsilon}{2} \approx \tan \sfrac{\pi \epsilon}{2} - \sfrac{1}{3} \tan \sfrac{3\pi \epsilon}{2}  < 0, \ \ \epsilon \rightarrow 0^+,\end{align}
yielding $\sin \varphi_{+}\left(6^+\right) \rightarrow 0^-$. Therefore,
\begin{align}
  h_{+}^0 \left(\bar{x}_0,6^+\right) < h_{+}\left(\bar{x}_3,6^+\right)\;,
\end{align}
with $x_3 \rightarrow x_0^+$. Using  \cref{tha},
\begin{align}
& \left. \begin{array}{l} \bar{x}=\frac{4n}{3}=\sfrac{4}{3} \ \ (n=1), \\
                           \bar{x}= \left. \frac{2(2n+1)}{3}+\frac{4\phi_{+}}{3\pi}-2\cdot 6 \right|_{\phi_+ \rightarrow 0}=\sfrac{2}{3} \ \ (n=6) \end{array}
   \right\} \quad\Rightarrow \quad \bar{x}_0=\sfrac{2}{3} \nonumber\\
 &  \Rightarrow \quad x_3 \rightarrow \sfrac{20}{3}^+\;,
\end{align}
and the result follows by continuity.
\end{proof}

\begin{theorem} \label{teo2} There exists $a_h \in \mathbb{R}^+$, $a_h \gg 1$, such that for all $a \in (a_h,+\infty)$, system \cref{invlin} has a sliding $4$-periodic solution. Moreover, for every $a \in (a_h,+\infty)$ there exists $\nu (a) \in \mathbb{R}^+$ such that solutions $y\left(x,x_i\right)$ of \cref{invlin}, with $x_i \in \left(\frac{8}{3},\sfrac{10}{3}\right) \cup \left(\sfrac{10}{3},\sfrac{10}{3}+\nu (a)\right)$, converge in finite time to the periodic solution $y\left(x,\sfrac{10}{3}\right)$.
\end{theorem}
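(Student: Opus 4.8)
The plan is to follow the forward orbit $y\!\left(x,\tfrac{10}{3}\right)$ of \cref{invlin} issued from $(\tfrac{10}{3},0)$, the right endpoint of the attracting sliding interval $\left(\tfrac83,\tfrac{10}{3}\right)$, and to show that for large $a$ it makes exactly three transversal crossings of $S_0$ before re-entering the next attracting sliding interval $\left(\tfrac{20}{3},\tfrac{22}{3}\right)$, on which it slides to the right endpoint $\tfrac{22}{3}$. Since $\tfrac{22}{3}=\tfrac{10}{3}+4$ and the vector field of \cref{invlin} is invariant under $x\mapsto x+4$ (because $\cos\pi x$ and $\sin\tfrac{\pi x}{2}$ are $4$-periodic, i.e.\ $3\cdot\tfrac43=4$ is a common period of the two half-plane subsystems), the forward orbit from $(\tfrac{22}{3},0)$ is the $(+4,0)$-translate of the one from $(\tfrac{10}{3},0)$; as $\dot x=1$ this closes the orbit with period $4$ in $t$, and it is a sliding orbit in the sense of \cref{s_ns_ps} because its last segment lies on $\Lambda^{L}$.

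Concretely, I would set $a_h=\max\{a_1,a_2,a_3\}$ from \cref{lems1,lems2,lems3}, enlarged if needed so that $\delta_1,\delta_2,\delta_3$ may be taken $<\tfrac23$. For $a\in(a_h,+\infty)$: \cref{lems1} gives the first contact $x_1=P_+^a(\tfrac{10}{3})\in(4,4+\delta_1)$, which by the sign structure around \cref{fig:sign} lies in the crossing region $S_{0C}$, so the orbit crosses transversally from $S_+$ into $S_-$; \cref{lems2} gives $x_2=P_-^a(x_1)\in(6,6+\delta_2)\subset S_{0C}$, so it crosses back from $S_-$ into $S_+$; \cref{lems3} gives $x_3=P_+^a(x_2)\in\left(\tfrac{20}{3},\tfrac{20}{3}+\delta_3\right)$, which by $\delta_3<\tfrac23$ lies in the attracting sliding interval $\left(\tfrac{20}{3},\tfrac{22}{3}\right)$. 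Arriving there from $S_+$, the only forward continuation is Filippov sliding, so with $\dot x=1,\ y\equiv0$ the orbit slides from $x_3$ to $\tfrac{22}{3}$, where the $S_+$ field has a visible fold and the unique forward continuation re-enters $S_+$; invoking the translation symmetry closes the orbit as above, giving a sliding $4$-periodic solution.

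For the convergence statement I would split the initial-condition set. If $x_i\in\left(\tfrac83,\tfrac{10}{3}\right)$ then $(x_i,0)$ is in the attracting sliding interval, so $y(x,x_i)$ slides to $(\tfrac{10}{3},0)$ in time $\tfrac{10}{3}-x_i$ and coincides with $y\!\left(x,\tfrac{10}{3}\right)$ thereafter. If $x_i\in\left(\tfrac{10}{3},\tfrac{10}{3}+\nu(a)\right)$ then $(x_i,0)\in S_{0C}$, so the orbit enters $S_+$; since the first-return map $P_+^a$ is continuous at $\tfrac{10}{3}$ from the right (the crossing at $x_1$ being transversal), one may take $\nu(a)$ so small that $P_+^a\!\left(\left(\tfrac{10}{3},\tfrac{10}{3}+\nu(a)\right)\right)\subset(4,4+\delta_1)$, and then \cref{lems2,lems3} — already uniform on the relevant intervals — give $(P_+^a\circ P_-^a\circ P_+^a)(x_i)\in\left(\tfrac{20}{3},\tfrac{20}{3}+\delta_3\right)$; the orbit slides to $(\tfrac{22}{3},0)$ and, by the translation symmetry, equals $y\!\left(x,\tfrac{10}{3}\right)$ from then on, hence converges (in fact becomes equal to the periodic solution) in finite time.

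The chaining of \cref{lems1,lems2,lems3} is routine bookkeeping; the real care lies in the Filippov part: verifying that the two intermediate contact points genuinely lie in crossing regions (so the orbit neither slides prematurely nor gets trapped), that the exit at $\tfrac{22}{3}$ is a visible fold with a unique forward continuation into $S_+$, and that the period-$4$ invariance together with forward uniqueness on attracting sliding sets truly both closes the orbit and delivers finite-time convergence of the nearby sliding-born solutions. Making these precise amounts to re-examining the signs of the half-plane fields near $x=4,\ 6,\ \tfrac{22}{3}$, i.e.\ re-using the computation that produced $S_{0A},S_{0R},S_{0C}$.
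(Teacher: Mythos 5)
Your proposal is correct and takes essentially the same route as the paper's proof: it chains \cref{lems1}, \cref{lems2} and \cref{lems3} with $a_h=\max\{a_1,a_2,a_3\}$ to show the orbit from $\left(\sfrac{10}{3},0\right)$ lands in the next attracting sliding interval $\left(\sfrac{20}{3},\sfrac{22}{3}\right)$, and then uses sliding plus continuous dependence of the composed return map for the finite-time convergence. The additional care you flag (checking the intermediate contacts lie in crossing regions, the exit fold at $\sfrac{22}{3}$, the period-$4$ translation symmetry) is left implicit in the paper but is the same underlying argument.
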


\begin{proof} The first part of the statement follows from the successive application of \cref{lems1} and \cref{lems3}, with $a_h=\max\{a_1,a_2,a_3\}$, because $\left(\frac{20}{3},\frac{20}{3}+\delta_3\right) \subset \left(\frac{20}{3},\frac{22}{3}\right)$, which is the next attractive sliding interval after $\left(\frac{8}{3},\sfrac{10}{3}\right)$.

The finite time convergence follows immediately for solutions with $x_i \in \left(\frac{8}{3},\sfrac{10}{3}\right)$, because this is an attractive sliding interval and a solution starting therein is such that $y\left(x,x_i\right)=0$, $\forall x \in \left[x_i,\sfrac{10}{3}\right]$, so in at most a distance of $\sfrac{2}{3}$ in $x$ it reaches the periodic solution $y\left(x,\sfrac{10}{3}\right)=0$.

For $x_i \in \left(\sfrac{10}{3},\sfrac{10}{3}+\nu (a)\right)$, \cref{lems1} and \cref{lems3} and the continuous dependence of solutions on initial conditions then imply that, given $a > a_h$, there exists $\delta \left(\nu (a) \right) \in \mathbb{R}^+$, $0<\delta \left(\nu (a) \right)\ll 1$, such that
\begin{align}
 \left(P^a_+ \circ P^a_- \circ P_+^a\right)\left(x_i\right) \in \left(\sfrac{20}{3},\sfrac{20}{3}+\delta \left(\nu (a) \right) \right) \subset \left(\sfrac{20}{3},\sfrac{22}{3}\right)\;
\end{align}
hence in this case the periodic solution is reached in at most a distance 4 in $x$.
\end{proof}

Numerical simulations appear to show that sliding periodic solutions exist for a wide range of values of $a$, failing to appear only for $a \rightarrow 0$. Some examples are simulated in \cref{fig:ints2}.

\begin{figure}[t]\centering
\includegraphics[width=0.8\textwidth]{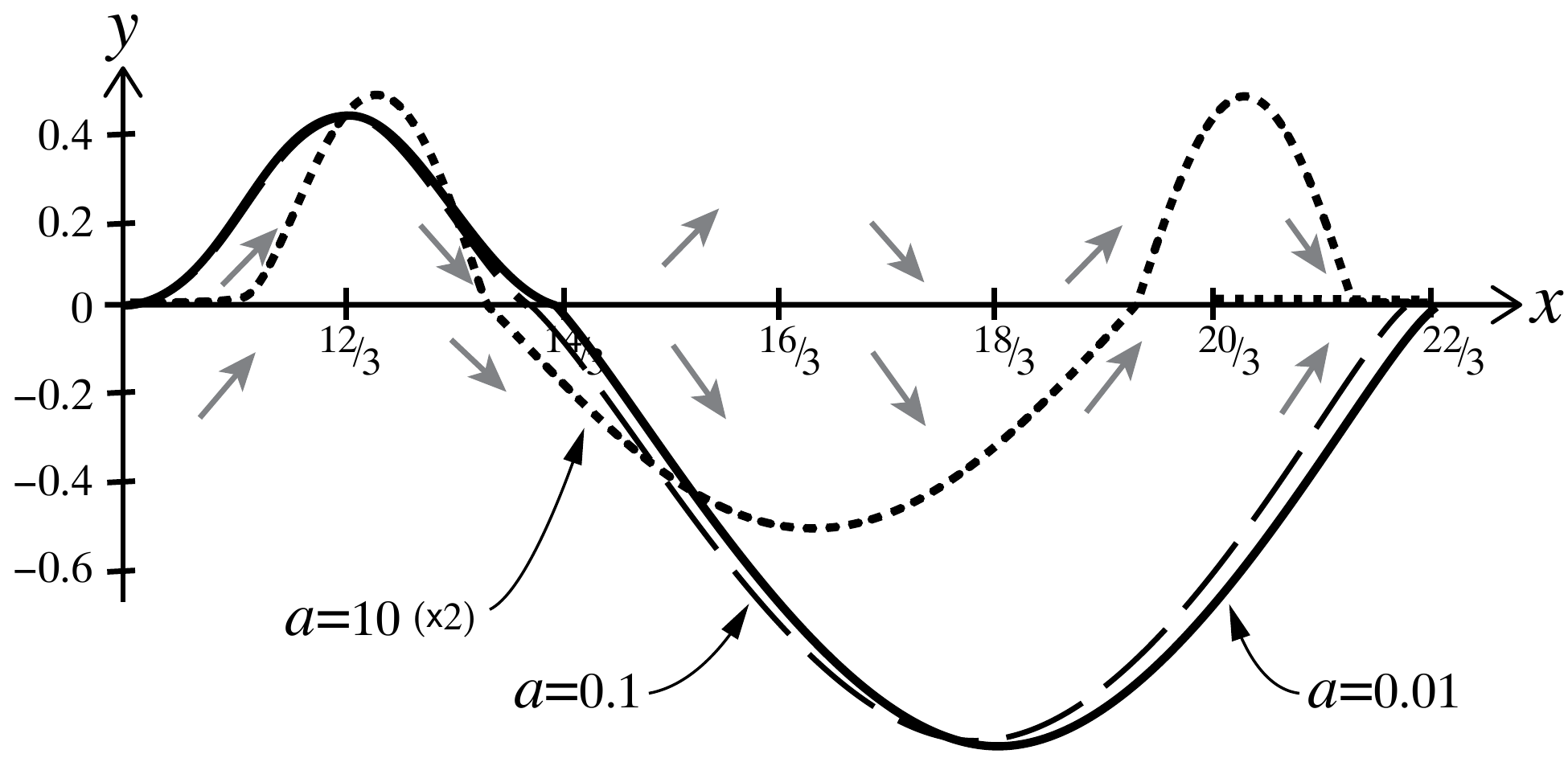}
\caption{Period 4 orbits of the linear switching system, showing: a non-sliding orbit for $a=0.01$ (full curve), an orbit with a small segment of sliding for $a=0.1$ (dashed curve), and a sliding orbit for $a=10$ (dotted curve, vertical scale multiplied by $1/2$ for clarity).}\label{fig:ints2}
\end{figure}


\section{Nonlinear switching} \label{sec:idnl}

Let us assume that the forcing is defined by \cref{1fnon}. Then \cref{s1d} can be written as
\begin{subequations} \label{invnon}
\begin{align}
\label{invnona} \dot x&=1,
\\
\label{invnonb} \dot y&=-ay-\sin \left[\pi x \left(1+\sfrac{1}{2}\lambda\right)\right],
\end{align}
\end{subequations}
with $\lambda = \sign(y)$ in $S_\pm$, and $\lambda \in \left(-1,1\right)$ in $S_0$.

\begin{proposition} The sliding manifolds of \cref{invnon} are given by
\begin{equation} \label{snon} \Lambda^{N}=\left\{\left(x,0\right) \in \mathbb{R}^2\;:\; x \in \left(\sfrac{2n}{3},2n\right), \ n \in \mathbb{N} \setminus\{0\} \right\}.
\end{equation}
\end{proposition}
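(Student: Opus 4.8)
The plan is to mimic the proof of the linear case but now solving the \emph{nonlinear} sliding condition \cref{smi} with $f_N$ from \cref{1fnon}. By \cref{smi} and \cref{invnonb}, sliding occurs on $S_0$ precisely where there is $\lambda\in(-1,1)$ with
\begin{equation*}
\sin\!\bigl[\pi x(1+\tfrac12\lambda)\bigr]=0,
\end{equation*}
i.e. where $\pi x(1+\tfrac12\lambda)=k\pi$ for some integer $k$, equivalently $x(1+\tfrac12\lambda)=k$. First I would rearrange this to solve for the multiplier: for $x\neq0$ we need $\lambda = 2(k/x-1)$ for some $k\in\mathbb{Z}$, and the constraint $\lambda\in(-1,1)$ becomes $-1<2(k/x-1)<1$, i.e. $\tfrac12 < k/x < \tfrac32$, i.e.
\begin{equation*}
\tfrac{2}{3}x < k < 2x .
\end{equation*}
So $x$ lies on the sliding manifold $\Lambda^N$ iff the open interval $(\tfrac23 x,\,2x)$ contains an integer.

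The second step is to identify exactly which $x>0$ have this property (the $x<0$ case being symmetric, or one simply restricts to $n\in\mathbb N$ as in the statement; I'd note $x=0$ gives the whole equation trivially but no genuine motion, exactly as in the linear proof's handling of the $\sin\frac{\pi x}{2}=0$ points). The interval $(\tfrac23 x, 2x)$ has length $\tfrac43 x$, which exceeds $1$ once $x>\tfrac34$, so for large $x$ it always catches an integer; the content is the precise book-keeping near small $x$ and near the endpoints. I would argue: the interval $(\tfrac23 x,2x)$ contains integer $n$ iff $\tfrac23 x < n$ and $n < 2x$, i.e. iff $\tfrac{n}{2} < x < \tfrac{3n}{2}$. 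Taking the union over $n\in\mathbb{N}\setminus\{0\}$ gives $\bigcup_{n\ge1}(\tfrac{n}{2},\tfrac{3n}{2})$. The claimed answer, however, is written as $\bigcup_{n\ge1}(\tfrac{2n}{3},2n)$ — so the real task is to check these two unions coincide as subsets of $(0,\infty)$. I would do this by showing consecutive intervals overlap and chasing the endpoints: $(\tfrac{n}{2},\tfrac{3n}{2})$ and $(\tfrac{n+1}{2},\tfrac{3(n+1)}{2})$ overlap precisely when $\tfrac{n+1}{2}<\tfrac{3n}{2}$, i.e. $n>\tfrac12$, so always, whence the union is a single interval from $\tfrac12$ to $\infty$ minus possibly the isolated endpoints where no interval is open — but endpoints $x=n/2$ for which no $(\tfrac{m}{2},\tfrac{3m}{2})$ is open need separate treatment. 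In fact the union $\bigcup_{n\ge1}(\tfrac{n}{2},\tfrac{3n}{2})$ equals $(\tfrac12,\infty)$ with the points $x$ where the only candidate integer sits at an endpoint removed; comparing with $\bigcup_{n\ge 1}(\tfrac{2n}{3},2n)$ I expect to find both equal $(\tfrac12,\infty)\setminus E$ for the same exceptional set $E$ (the points excluded because sliding requires $\lambda$ \emph{strictly} inside $(-1,1)$), so they agree.

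The main obstacle — and the only genuinely delicate point — is this endpoint/exceptional-set bookkeeping: one must be careful that the sliding definition \cref{smi} demands $s(x)\in(-1,+1)$ \emph{open}, so points where the only valid $k$ forces $\lambda=\pm1$ must be excluded, and one must confirm these are exactly the points excluded from the stated union $\bigcup(\tfrac{2n}{3},2n)$ as well (e.g. $x=\tfrac{2n}{3}$ would need $k$ with $\tfrac23\cdot\tfrac{2n}{3}<k<\tfrac{4n}{3}$, and $x=2n$ similarly at the other end). I would handle this by writing $\Lambda^N\cap S_+ =\{(x,0): \exists\,k\in\mathbb Z,\ \tfrac23 x<k<2x\}$, then verifying directly that $x\in(\tfrac{2m}{3},2m)$ implies the integer $k=m$ works (since $\tfrac23 x<\tfrac23\cdot 2m=\tfrac{4m}{3}$... wait, one checks $\tfrac23 x < m$ iff $x<\tfrac{3m}{2}$ and $m<2x$ iff $x>\tfrac m2$, so $k=m$ works on $(\tfrac m2,\tfrac{3m}{2})\supset(\tfrac{2m}{3},2m)$), and conversely that if $x\notin\bigcup_m(\tfrac{2m}{3},2m)$ then no integer lies strictly in $(\tfrac23 x,2x)$ — the latter by observing that the complement of $\bigcup_m(\tfrac{2m}{3},2m)$ in $(0,\infty)$ consists of the closed intervals $[2m,\tfrac{2(m+1)}{3}]$ (nonempty iff $2m\le\tfrac{2(m+1)}{3}$, i.e. $m\le\tfrac12$, so actually only the point-gaps matter for $m\ge1$) together with $(0,\tfrac23]$, and checking no admissible $k$ survives there. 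Once $\Lambda^N\cap S_+$ is pinned down, the full result \cref{snon} follows, and I would close by remarking — as the paper does for the linear case — that unlike $\Lambda^L$ these sliding regions extend over parts of the crossing set $S_{0C}$, which is the phenomenon driving the rest of the section.
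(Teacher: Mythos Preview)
There is a concrete algebra slip that derails the whole argument. From the correct inequality $\tfrac12 < k/x < \tfrac32$ (for $x>0$) you should get, multiplying through by $x$, that $\tfrac{x}{2} < k < \tfrac{3x}{2}$; equivalently, for fixed integer $k$, the admissible $x$ lie in $(\tfrac{2k}{3},2k)$, which is exactly the paper's claim. Instead you wrote $\tfrac{2}{3}x < k < 2x$, which is the \emph{wrong} inequality (you have effectively swapped the roles of $x$ and $k$ in the bounds). From that you deduce the intervals $(\tfrac{n}{2},\tfrac{3n}{2})$, whose union over $n\ge1$ is $(\tfrac12,\infty)$, whereas the correct union $\bigcup_{n\ge1}(\tfrac{2n}{3},2n)$ is $(\tfrac23,\infty)$. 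These are genuinely different sets: for instance $x=0.6$ lies in your set but not in $\Lambda^N$, since the only candidate $k=1$ gives $\lambda=2(1/0.6-1)=\tfrac43>1$. All of the subsequent ``endpoint bookkeeping'' and the attempt to reconcile the two unions is chasing a discrepancy created by this error, not by any subtlety in the problem.

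The paper's own proof is a two-line computation: from $\sin\!\big(\pi x[1+\tfrac12\lambda]\big)=0$ one gets $\lambda=2(\tfrac{n}{x}-1)$, and the constraint $-1<\lambda<1$ is solved directly for $x$ (not for the integer) to give $x\in(\tfrac{2n}{3},2n)$. Once you fix the sign/inversion slip your approach collapses to exactly this, and no union-comparison or exceptional-set analysis is needed.
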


\begin{proof} It follows from \cref{inv} and \cref{invnonb} that the sliding time intervals are defined by
\begin{align}
 \sin \left(\pi x \left[1+\sfrac{1}{2}\lambda\right]\right)=0, \quad \lambda \in (-1,1)<1\;.
\end{align}
Therefore $-1 < \lambda=2\left(\sfrac{n}{x}-1\right) < 1$, hence
\begin{align}
 x \in \left(\sfrac{2n}{3},2n\right)\quad{\rm for}\quad n \in \mathbb{N}\setminus\{0\}\;.
\end{align}
\end{proof}

\begin{figure}[t]\centering
\includegraphics[width=0.85\textwidth]{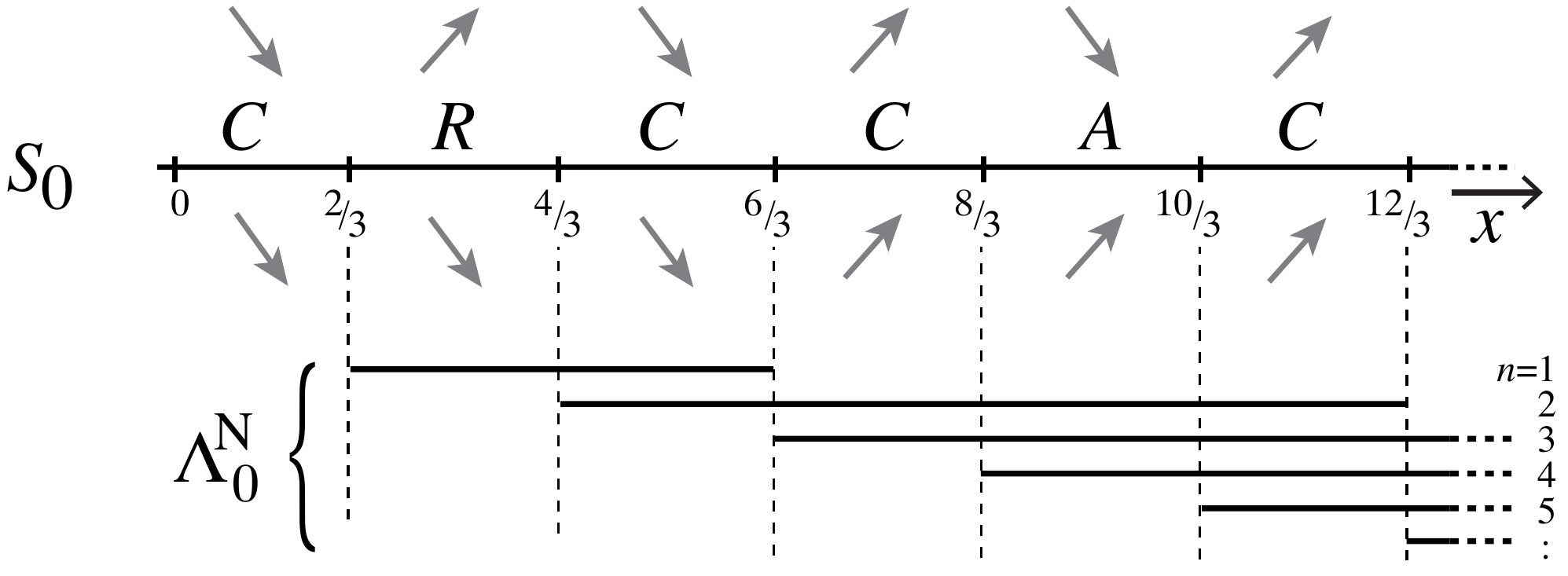}
\caption{The nonlinear switching system has sliding manifolds (branches of $\Lambda^{N}$) on $S_0$, represented here to show how they overlap increasingly with larger $x$.  }\label{fig:smnon}
\end{figure}

The sliding manifolds for $n=1$ and $n=2$ are depicted in \cref{fig:smnon}. It turns out that sliding may take place on $x\in(\sfrac23,\infty)$ irrespective of the directions of the vector field outside the switching threshold, i.e. not only through regions of attractive sliding, but also of repelling sliding and crossing. For $x>\sfrac43$ the sliding manifolds overlap as illustrated in \cref{fig:smnon}, and a regularization is needed to disclose on which manifold(s) a trajectory will evolve after hitting the switching threshold, and whether they can ever leave the switching threshold.

The sliding manifolds defined in \cref{snon} depends on $n$, such that a trajectory evolving on a specific manifold may slide for a maximum distance in $x$ of
\begin{align}
\Delta x (n)=2n-\sfrac{2n}{3}=\sfrac{4n}{3}\;.
\end{align}
This reveals a sort of \emph{ageing} in the system, as the `later' in $x$ a trajectory undergoes sliding motion, the `longer' sliding might persist. Such a situation is not found in the linear case, where all the sliding intervals have a constant length of $\sfrac{2}{3}$ (recall \cref{slin}).

From the direction of the vector fields in $S_\pm$ (see \cref{fig:sign}) we can discern that a sliding solution on $\Lambda^{N}$ must have 
\begin{subequations}
\begin{align}
&\mbox{entered from $S_+$ if $x \in \left(\sfrac{4n-2}{3},4n-2\right)$, or}\label{si+}\\
&\mbox{entered from $S_-$ if $x \in \left(\sfrac{4n}{3},4n\right)$, }\label{si-}
\end{align}
\end{subequations}
with $x$ denoting the time of entry.


\begin{lemma} \label{lempnl} For all $a \in \mathbb{R}^+$ we have that
\begin{align} \label{bff1} & P_+^a\left(4n-2\right) \in \left(4n-\sfrac{4}{3},4n-\sfrac{2}{3}\right), \ \forall  n \in \mathbb{N} \setminus\{0\}, \\
              \label{bff2}  & P_-^a\left(4n\right) \in \left(4n+2,4n+4\right), \ \forall  n \in \mathbb{N}.
\end{align}
\end{lemma}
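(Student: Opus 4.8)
The plan is to follow, almost verbatim, the template used to prove \cref{lems1} (for the case $\sin\varphi>0$) and \cref{lems2} or \cref{lems3} (for $\sin\varphi<0$); the only genuinely new ingredient is that the special starting points $4n-2$ and $4n$ are tangency points whose phases are independent of $n$, so that one uniform argument covers every $n$.

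First I would compute the starting phases. A solution leaving $(4n-2,0)$ enters $S_+$ (its second derivative there is $\omega_+\pi>0$), so by \cref{varphi} with $\omega_+=\tfrac32$, $\varphi_+(4n-2)=\tfrac{3\pi}{2}(4n-2)-\phi_+=3\pi(2n-1)-\phi_+\equiv\pi-\phi_+\pmod{2\pi}$ since $3(2n-1)$ is odd; by \cref{fik}, $\phi_+\in(0,\tfrac\pi2)$, so $\varphi_+(4n-2)\in(\tfrac\pi2,\pi)$ and $\sin\varphi_+(4n-2)>0$ for every $n\ge1$ and $a>0$. This is exactly the phase \cref{h3103_2} appearing in \cref{lems1} for $x_i=\tfrac{10}{3}$, so from here the argument is literally that of \cref{lems1} with the integer $4n$ dropping out. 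Dually, a solution leaving $(4n,0)$ enters $S_-$ (second derivative $-\omega_-\pi<0$), and \cref{varphi} with $\omega_-=\tfrac12$ gives $\varphi_-(4n)=2\pi n-\phi_-\equiv-\phi_-\pmod{2\pi}\in(\tfrac{3\pi}{2},2\pi)$, so $\sin\varphi_-(4n)=-\sin\phi_-<0$, again uniformly in $n$ and $a$.

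Next I would run the sandwich from \cref{ha}--\cref{hb}. Because $e^{-a\bar x}\in(0,1)$ for $\bar x>0$, we get $h_+^\infty<h_+<h_+^0$ when $\sin\varphi_+>0$ and $h_-^0<h_-<h_-^\infty$ when $\sin\varphi_-<0$. For \cref{bff1}: substituting $\omega_+=\tfrac32$ and $x_i=4n-2$ into \cref{tha}--\cref{thb}, the first positivity interval of $h_+^\infty(\cdot,4n-2)$ ends at $\tfrac23+\tfrac{2\phi_+}{3\pi}$ and the first negativity interval of $h_+^0(\cdot,4n-2)$ begins at $\tfrac23+\tfrac{4\phi_+}{3\pi}$; since $h_+>h_+^\infty>0$ before the former and $h_+<h_+^0<0$ after the latter, the first positive zero $\bar x_1$ of $h_+(\cdot,4n-2)$ lies strictly between them, hence in $(\tfrac23,\tfrac43)$ (the worst cases over $a>0$ being the endpoints $\tfrac23$ and $\tfrac43$), so $P_+^a(4n-2)=4n-2+\bar x_1\in(4n-\tfrac43,4n-\tfrac23)$. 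For \cref{bff2}, the identical bookkeeping with $\omega_-=\tfrac12$, $x_i=4n$ traps the first positive zero $\bar x_2$ of $h_-(\cdot,4n)$ in $(\tfrac{2\phi_-}{\pi}+2,\;2+\tfrac{4\phi_-}{\pi})\subset(2,4)$, giving $P_-^a(4n)\in(4n+2,4n+4)$. As a consistency check, the explicit map \cref{P0} at $a=0$ yields $P_+^0(4n-2)=4n-\tfrac23$ and $P_-^0(4n)=4n+4$, the right-hand endpoints, as one expects when $a$ is swept from $0$ to $+\infty$.

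The one point needing care --- the step hidden in \cref{lems1}'s phrase ``in appropriate intervals'' --- is ruling out a spurious zero of $h_\pm$ in the tiny initial interval (before $h_\pm^\infty$, resp.\ $h_\pm^0$, first changes sign) which would otherwise qualify as the first positive zero. Here I would use that $\bar x=0$ is a tangential exit: by \cref{fik}, $\tan\varphi_\pm=-\omega_\pm\pi/a$, so $h_\pm'(0)=0$ while $h_\pm''(0)=(a^2+\omega_\pm^2\pi^2)\sin\varphi_\pm$ has the sign of $\sin\varphi_\pm$, whence $h_+$ is positive (resp.\ $h_-$ negative) just past $0$; and, writing $h_\pm'$ at any zero as $-\sqrt{a^2+\omega_\pm^2\pi^2}\,\sin(\theta+\phi_\pm)$ (using $\tan\phi_\pm=\omega_\pm\pi/a$), one checks that in that tiny interval the derivative of $h_\pm$ at any zero has the wrong sign to be a first crossing --- a contradiction. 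With the phases identified and this sign check in place, the rest is the same index-chasing through \cref{tha}--\cref{thb} already carried out in \cref{lems1}, now done once and uniformly in $n$.
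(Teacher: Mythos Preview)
Your proposal is correct and follows essentially the same route as the paper: reduce to a single representative via the observation that the phases $\varphi_\pm$ at $4n-2$ and $4n$ are independent of $n$ modulo $2\pi$ (the paper phrases this as ``periodicity of the vector field in $S_\pm$'' and cites \cref{lems1} directly for \cref{bff1}), then run the sandwich $h^0_\pm,\,h^\infty_\pm$ to trap the first positive zero. Your explicit treatment of the tangential exit at $\bar x=0$ and the resulting exclusion of a spurious early zero is a welcome clarification of what the paper leaves implicit in the phrase ``in appropriate intervals.''
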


\begin{proof} It is shown in \cref{lems1} that $P_+^a\left(\sfrac{10}{3}\right) \subset \left(4,\sfrac{14}{3}\right)$. The periodicity of the vector field in $S_+$ allows an immediate generalization of the result as follows
\begin{align}
 P_+^a\left(\sfrac{4n-2}{3}\right) \in \left(\sfrac{4n}{3},\sfrac{4n+2}{3}\right), \ \ n \in \mathbb{N} \setminus\{0\}\;,
\end{align}
which has relation \cref{bff1} as a particular case.
From this it is straightforward that, because of the periodicity of the system in $S_-$, it suffices to prove it for $n=0$. Recalling \cref{bart}-\cref{ppm}, let $\bar{x}_1=P_-^a\left(0\right)$ denote the first positive zero of
\begin{align}
 h_{-}\left(\bar{x}_1,0\right)=e^{-a\bar{x}_1}\sin \varphi_{-}\left(0\right)- \sin \left(\sfrac{\pi \bar{x}_1}{2}+\varphi_{-}\left(0\right)\right)=0, 
\end{align}
where \cref{varphi} and \cref{fik} yield
\begin{align}
 \varphi_{-}\left(0\right)=\frac{\pi \cdot 0 }{2}-\phi_-\in \left(-\sfrac{\pi}{2},0\right) \quad\Rightarrow \quad \sin \phi_-(0) < 0.
\end{align}
Therefore
\begin{align}
  h^0_{-}\left(\bar{x},0\right) < h_{-}\left(\bar{x},0\right) < h_{-}^{\infty}\left(\bar{x},0\right) < 0
\end{align}
in appropriate intervals, yielding $\bar{x}_\infty < \bar{x}_1 < \bar{x}_0$, where $\bar{x}_0,\bar{x}_\infty,$ are the first positive zeros of $h_{-}^0\left(\bar{x},0\right)$, $h_{-}^{\infty}\left(\bar{x},0\right)$, respectively.
Now from \cref{tha},
\begin{align}
 \left. \begin{array}{l} \bar{x}=4 \ \ (n=1), \\
                           \bar{x}= 2(2n+1)+\frac{4\phi_{-}}{\pi}-2\cdot 0 \in \left(2,4\right) \ \ (n=0) \end{array}
   \right\} \quad\Rightarrow \quad \bar{x}_0=4,
\end{align}
while \cref{thb} implies
\begin{align}
 \bar{x}=2n+\frac{2\phi_{-}}{\pi}-0 \in \left(2,3\right) \ \ (n=1) \quad\Rightarrow \quad \bar{x}_\infty=2\;.
\end{align}
Notice that the worst cases for the lower limit (lowest value for $\bar{x}_\infty$) and the upper limit (highest value for $\bar{x}_0$) have been selected. Then
\begin{align}
 2 < \bar{x}_1 < 4 \quad\Rightarrow \quad  x_1=\bar{x}_1-0=\bar{x}_1 \in \left(2,4\right), \quad \forall a \in \mathbb{R}^+\;.
\end{align}
\end{proof}

\begin{theorem} \label{inlnsps} System \cref{invnon} does not have non-sliding periodic solutions.
\end{theorem}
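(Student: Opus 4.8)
My plan is to argue by contradiction, using the single feature that separates the nonlinear system from the linear one: by the Proposition above, the sliding manifold $\Lambda^N$ covers the whole ray $x>\sfrac23$ of the switching threshold. Indeed the intervals $(\sfrac{2n}{3},2n)$ overlap, since $\sfrac{2(n+1)}{3}<2n$ for every integer $n\ge1$, so $\bigcup_{n\ge1}(\sfrac{2n}{3},2n)=(\sfrac23,\infty)$ by a one-line induction. Hence any solution touching $S_0$ at a point $(x,0)$ with $x>\sfrac23$ touches $\Lambda^N$. Against this I will set the elementary fact that a periodic solution that ever reaches $S_0$ must reach it at arbitrarily large $x$.

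So suppose $\gamma$ is a non-sliding periodic solution of \cref{invnon}; since $\dot x=1$ the only meaningful notion of periodicity here is $y(x+T)=y(x)$ for all $x$, for some $T>0$. The first step is to show that $\gamma$ must meet $S_0$. If $y$ kept a constant sign then $\gamma$ would be a solution of the smooth, damped, periodically forced linear equation $\dot y=-ay-\sin(\omega_\pm\pi x)$ on $S_\pm$; but with $a>0$ the difference of any two bounded solutions is $Ce^{-ax}$, so the only bounded solution on $\mathbb R$ is the periodic one, which by \cref{sol1} and \cref{sol3} equals $-\sin(\omega_\pm\pi x-\phi_\pm)/\sqrt{\omega_\pm^2\pi^2+a^2}$. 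This sinusoid has zero mean, hence changes sign, so it is not confined to a closed half-plane --- a contradiction. Therefore $\gamma$ passes through some point $(x_\ast,0)\in S_0$.

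The second step then closes the argument: by $T$-periodicity $(x_\ast+nT,0)\in\gamma$ for every $n\in\mathbb N$, and choosing $n$ large enough that $x_\ast+nT>\sfrac23$ places this point on $\Lambda^N$ by the first paragraph. Thus part of $\gamma$ lies on the sliding manifold, contradicting \cref{s_ns_ps}. Hence system \cref{invnon} has no non-sliding periodic solution.

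I do not expect a genuine obstacle; the argument is soft. Two points do want a little care. First, one must fix the correct notion of periodic solution for this $x$-unbounded autonomous system (periodicity of $y$ as a function of $x$), which is exactly what makes the recurrence $x_\ast+nT\to\infty$ legitimate. Second, one should check the argument is not sidestepped by an orbit that merely grazes $S_0$ tangentially and returns to the same half-plane --- but such a contact point also recurs at $x_\ast+nT$, so it too eventually exceeds $\sfrac23$ and the same contradiction applies. If one prefers not to invoke the explicit periodic solution in the first step, its sign change follows abstractly: it is continuous, not identically zero, and has zero mean because $\sin(\omega_\pm\pi x)$ averages to $0$ over its period.
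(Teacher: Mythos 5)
Your central geometric observation — that the intervals $(\sfrac{2n}{3},2n)$ overlap for $n\ge1$, so that $\Lambda^N$ covers the entire ray $x>\sfrac23$ on $S_0$ — is correct and is indeed the ``ageing'' structure that makes the nonlinear system special. Your step showing that a periodic solution cannot remain in one closed half-plane (via the unique bounded solution of $\dot y=-ay-\sin(\omega_\pm\pi x)$, whose zero mean forces a sign change) is also sound.

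The gap is in the last step, where you pass from ``the periodic orbit \emph{touches} $\Lambda^N$ at some point $(x_*,0)$ with $x_*>\sfrac23$'' to ``part of the orbit \emph{lies on} $\Lambda^N$, hence it is a sliding orbit by \cref{s_ns_ps}.'' An orbit that crosses $S_0$ transversally at an isolated point $x_*\in\Lambda^N$ meets $\Lambda^N$ in a single point; it does not follow the sliding dynamics \cref{inv} there, and ``lies on the sliding manifold'' is intended to mean a segment on which the orbit slides, not a point of transversal intersection. If your reading were accepted, it would declare \emph{every} periodic solution of the nonlinear system crossing $S_0$ anywhere in $x>\sfrac23$ to be sliding by definition — in particular the crossing $4$-periodic orbit of the linear system (\cref{teo1}), which crosses at $x_0+4n$ and $4(n+1)-x_0$ for $x_0\in(0,\sfrac23)$, would also be labelled ``sliding'' when viewed in the nonlinear setting, even though it never slides. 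What actually prevents that orbit from being a valid solution of the nonlinear system is not \cref{s_ns_ps} but the hidden dynamics inside the switching layer: a trajectory reaching $S_0$ at $x_*\in\Lambda^N$ meets a blocking branch of the critical manifold and cannot simply pass through. This is the content the paper makes rigorous only in \cref{sec:rdnl}; at the level of the discontinuous system \cref{invnon} it must be assumed (or argued for) explicitly.

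The paper's own proof takes a different, more computational route: it restricts attention to crossings at $x=2n$ (the fold points where the $n$th branch of $\Lambda^N$ meets the edge of the layer) and invokes \cref{lempnl} to show that the return maps $P_\pm^a$ from such points land strictly in the interior of $(2m,2m+2)$, so no chain of crossings at multiples of two can close up into a periodic orbit. Your argument is softer and captures the ageing intuition cleanly, but as written it proves a weaker statement under a non-standard reading of ``sliding.'' To close the gap you would need to justify, within the nonlinear switching framework, that a trajectory reaching $(x_*,0)$ with $x_*$ strictly inside a branch of $\Lambda^N$ cannot cross transversally — which is precisely what the layer analysis of \cref{sec:rdnl} supplies.
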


\begin{proof} Due to the overlapping of sliding intervals and the fact that $a>0$, non-sliding periodic solution candidates are solutions that cross $S_0$ on $x=2n$, $n \in \mathbb{N}$, and evolve alternatively in $S_+$ and $S_-$. However, it follows immediately from \cref{lempnl} that solutions exiting $S_0$ in such points hit again $y=0$ in less than two $x$-units in either case.
\end{proof}

Considering the possibility of jumping between sliding manifolds on $S_0$, it becomes immediately evident that there can exist infinitely many sliding periodic orbits. However, the ones we study below are particularly important, as we will see when we regularize.

\begin{theorem} \label{nlps} For all $a \in \mathbb{R}^+$, there exists $x_a \in (2,4)$ such that system \cref{invnon} has a unique sliding 4-periodic solution, $y_d(x)$, which lies in $\overline{S}_-$ and satisfies
\begin{align} & y_d\left(x\right) < 0, \ \ x \in \left(4n,4n+x_a\right), \\ 
               & y_d\left(x\right) = 0, \ \ x \in \left\{0\right\}\cup\left[4n+x_a,4(n+1)\right], \ \ n \in \mathbb{N}.
\end{align}							
\end{theorem}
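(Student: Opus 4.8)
The periodic orbit we must produce is, on each window $[4k,4(k+1)]$, a single dip into $S_-$ followed by a segment of sliding along $S_0$; so the plan is to build it piece by piece, check admissibility at the gluing points, and then argue uniqueness. Set $x_a:=P_-^a(0)$, the first return to $S_0$ of the trajectory that leaves the origin into $S_-$ (well defined since $a>0$). The bound $x_a\in(2,4)$ is exactly relation \cref{bff2} of \cref{lempnl} at $n=0$, the case carried out explicitly in that proof, so this part of the statement is already available.

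For the excursions, note first that in $S_-$ one has $\dot y=-\sin(\pi x/2)$ on $y=0$, which is negative for small $x>0$, so the solution through $(0,0)$ really does enter $S_-$ and there agrees with $y=Y_-(x,0)$ of \cref{sol3}; by definition of $x_a$ this function has no zero on $(0,x_a)$, and being negative just past $0$ and continuous, it is negative throughout $(0,x_a)$, returning to $S_0$ at $x_a$. Since the $S_-$ subsystem is $4$-periodic in $x$, the trajectory leaving any $(4k,0)$ into $S_-$ equals $Y_-(\,\cdot-4k,0)$ under the shift, is negative on $(4k,4k+x_a)$, and returns to $S_0$ at $4k+x_a$; and because $x_a\in(2,4)$ we have $4k+x_a\in(4k+2,4(k+1))\subset(\tfrac{4(k+1)}{3},4(k+1))$, so the return point lies on the branch of $\Lambda^{N}$ of index $n=2(k+1)$, cf.\ \cref{snon}. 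On that branch the admissible multiplier is $\lambda(x)=2\big(\tfrac{2(k+1)}{x}-1\big)$, which lies in $(-1,1)$ exactly for $x\in(\tfrac{4(k+1)}{3},4(k+1))$ and decreases monotonically to $-1$ at $x=4(k+1)$; hence a trajectory entering this branch at $4k+x_a$ slides ($\dot x=1$, $y\equiv0$) up to $x=4(k+1)$, where $\lambda=-1$ drives it off $S_0$, and since $\dot y=-\sin(\pi x/2)<0$ on $y=0$ just to the right of $4(k+1)$ it leaves into $S_-$, beginning the next excursion. The $y$-pieces on $[4(k+1),4(k+2)]$ are then the $x$-shifts by $4$ of those on $[4k,4(k+1)]$, so $y_d(x+4)=y_d(x)$ for $x\ge0$, with the claimed sign pattern and with $y_d(0)=0$ the single contact point opening the first dip.

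For uniqueness, let $\tilde y$ be a $4$-periodic solution lying in $\overline S_-$ that is not contained in $S_0$. It never enters $S_+$; each of its excursions into $S_-$ returns to $S_0$ in finite time because $a>0$; and on each sliding segment it follows a branch of $\Lambda^{N}$ on which $\lambda$ is decreasing, so it leaves $S_0$ where $\lambda=-1$, i.e.\ at some $x=2n$. Leaving there into $S_-$ (needed to stay in $\overline S_-$, by \cref{si-} and the fact that $\dot y$ on $y=0$ just to the right of $x=2n$ has the sign of $-(-1)^n$) requires $n$ even, so the exits occur only at the points $x=4k$; together with period exactly $4$ this forces each excursion to start at some $x=4k$ and hence, by the previous paragraph, to coincide with $Y_-(\,\cdot-4k,0)$ and return at $4k+x_a$. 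Thus $\tilde y=y_d$.

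The one genuinely delicate step — as opposed to the routine ODE bookkeeping and the direction analysis of \cref{fig:sign} — is the admissibility of the sliding segment. Because the branches of $\Lambda^{N}$ overlap for $x>\tfrac43$ (\cref{fig:smnon}), one must show that a trajectory arriving from $S_-$ at $4k+x_a$ is captured precisely by the branch of index $n=2(k+1)$ (the one that terminates at $4(k+1)$) rather than by any of the other branches through that point, and likewise that the exit at $4(k+1)$ is into $S_-$. This is where the set-valued, hidden-dynamics structure of the nonlinear system is used: the branches of $\Lambda^{N}$ of even index are the attracting slow manifolds of the switching layer, and an incoming trajectory at $y=0$ (entering the layer at $\lambda=-1$ with $\dot\lambda>0$ there) is driven to the attracting branch of smallest $\lambda$, which at $x=4k+x_a$ is exactly $n=2(k+1)$; pinning this down, and with it excluding the degenerate fully-sliding solution $y\equiv0$ as not a bona fide sliding periodic orbit in $\overline S_-$, is the crux, everything else following from \cref{lempnl} and the sign analysis above.
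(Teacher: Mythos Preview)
Your construction is exactly the paper's: start at $(0,0)$, descend into $S_-$, return at $x_a=P_-^a(0)\in(2,4)$ via \cref{lempnl}, slide along the even-index branch of $\Lambda^{N}$ to $x=4$, and iterate by periodicity. The paper's proof is extremely terse---it simply \emph{assumes} the trajectory follows a sliding interval of type \cref{si-} exiting at $x=4$---whereas you supply the details (identifying the branch $n=2(k+1)$, checking the sign of $\dot y$ at the gluing points, arguing uniqueness via the exit condition $\lambda=-1$).

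Your final paragraph correctly isolates the one point neither proof fully settles at the discontinuous level: with overlapping branches of $\Lambda^{N}$, the selection of the $n=2(k+1)$ branch over the others is not determined by Filippov's inclusion alone. The paper handles this exactly as you do, by fiat, and then explicitly flags in \cref{therem} that ``to properly analyse the sliding dynamics requires close inspection of the dynamics on $S_0$ that can only be achieved by regularization,'' deferring the real justification to \cref{sec:rdnl}. Your heuristic (entry at $\lambda=-1$ meets the lowest attracting branch first) is precisely what the regularization later confirms, so you have anticipated the resolution; but at the level of \cref{nlps} itself both the paper and you are making an assumption, and you are right to call it the crux.
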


\begin{proof} Let $y_d(0)=0$. Therefore $y_d(x)>0$ for all $x \in (0,x_a)$, with $x_a$ stemming immediately from \cref{bff2} in \cref{lempnl}: $x_a=P_-^a\left(0\right)-0 \in (2,4)$. Now, assume that $y_d(x)$ evolves on a sliding interval of the type \cref{si-} once on $S_0$, exiting at $x=4$. Then the result follows by periodicity.
\end{proof}

\begin{proposition} \label{nls} For all $a \in \mathbb{R}^+$, there exist solutions that start in $S_+$ and have their evolutions constrained in $\overline{S}_-$ for all $x > x_{T_a}$, $x_{T_a}$ denoting the smallest $x$ at which they hit the switching threshold. Moreover, they overlap with the periodic solution described in \cref{nlps} for $x \geq 4n$, with $n \in\mathbb{N}$.
\end{proposition}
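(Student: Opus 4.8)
The plan is to produce, for each integer $n\ge1$, one admissible solution $\gamma$ of \cref{invnon} that starts in $S_+$, drops onto $S_0$ in an attracting region, then slides along $S_0$ up to $x=4n$, and for $x\ge 4n$ coincides with the periodic orbit $y_d$ of \cref{nlps}; since $y_d$ lies entirely in $\overline S_-$, both claims follow at once. For the launch, observe that the $S_+$-solution $Y_+(\cdot,4n-2)$ through $(4n-2,0)$ enters $S_+$: there $y=\dot y=0$ while $\ddot y>0$ (indeed $\ddot y=\sfrac{3\pi}{2}$), so $Y_+(x,4n-2)>0$ for $x$ in a punctured neighbourhood of $4n-2$ and hence on all of the interval $\big(4n-2,\,P_+^a(4n-2)\big)$ up to the first return to $S_0$. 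Thus, for any small $\delta>0$, the point $\big(4n-2+\delta,\,Y_+(4n-2+\delta,4n-2)\big)$ lies in $S_+$, and the solution $\gamma$ through it follows $Y_+(\cdot,4n-2)$, remains in $S_+$, and first meets $S_0$ at $x_{T_a}:=P_+^a(4n-2)$; by \cref{lempnl} this gives $x_{T_a}\in\big(4n-\sfrac43,\,4n-\sfrac23\big)$.

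Since $4n-\sfrac43>\sfrac{4n}{3}$ and $4n-\sfrac23<4n$ for $n\ge1$, the interval $\big(x_{T_a},4n\big)$ lies inside the single branch $x\in(\sfrac{4n}{3},4n)$ of the sliding manifold $\Lambda^{N}$ from \cref{snon}, so the sliding flow \cref{inv}, $\dot x=1$ with $y\equiv0$, is an admissible continuation of $\gamma$ on $[x_{T_a},4n]$, along which $\gamma\equiv0\in\overline S_-$. At $x=4n$ the $S_-$ subsystem is tangent to $S_0$, $\dot y=0$ with $\ddot y<0$ (indeed $\ddot y=-\sfrac{\pi}{2}$), so $Y_-(x,4n)<0$ just above $4n$, and $\gamma$ may leave $S_0$ into $S_-$ there along $Y_-(\cdot,4n)$. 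This is exactly what $y_d$ does at $(4n,0)$ by \cref{nlps} (there $y_d=0$ on $[4(n-1)+x_a,4n]$ and $y_d(x)=Y_-(x,4n)<0$ on $(4n,4n+x_a)$), so taking for $\gamma$ at every later contact with $S_0$ the same continuation that $y_d$ makes gives $\gamma(x)=y_d(x)$ for all $x\ge4n$. Hence $\gamma$ starts in $S_+$, first hits $S_0$ at $x_{T_a}$, has $\gamma(x)=0$ for $x\in(x_{T_a},4n)$ and $\gamma(x)=y_d(x)\le0$ for $x\ge4n$: it is confined to $\overline S_-$ for all $x>x_{T_a}$ and overlaps $y_d$ for $x\ge4n$, as required. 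Letting $n\ge1$ and $\delta$ vary produces a whole family of such solutions.

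The only genuinely nontrivial point is the admissibility, in the second step, of keeping $\gamma$ on $S_0$ from $x_{T_a}$ all the way to $4n$ and then releasing it. The landing point $x_{T_a}$ sits in an attracting portion of $S_0$ (its class is $4$-periodic and $x_{T_a}-4(n-1)\in(\sfrac83,\sfrac{10}{3})=S_{0A}$), so $\gamma$ must begin sliding there; but a Filippov reading would force the sliding to stop at the fold $x=4n-\sfrac23\in T_+$, whereas in the nonlinear model the branch $(\sfrac{4n}{3},4n)$ of $\Lambda^{N}$ runs on past that point up to $4n$. Showing that $\gamma$ then \emph{stays} on $\Lambda^{N}$ and exits into $S_-$ only at $4n$ is precisely the hidden-dynamics/ageing mechanism of the nonlinear switch; I would argue it directly from the differential-inclusion description of $\Lambda^{N}$ (or simply invoke the construction of $y_d$ from \cref{nlps}). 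Everything else is bookkeeping on top of \cref{lempnl} and \cref{nlps}.
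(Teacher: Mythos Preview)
Your argument is correct and rests on the same two ingredients as the paper's proof: \cref{lempnl} to locate the first return to $S_0$, and the freedom (in the nonlinear model) to choose an even-indexed sliding branch of $\Lambda^{N}$ that terminates at $x=4n$, where the solution can be glued onto $y_d$. The paper's own proof organises this a little differently. Rather than constructing a specific launch point, it starts from an \emph{arbitrary} solution in $S_+$, notes that damping forces a finite-time hit of $S_0$, and then splits into cases according to where $x_{T_a}$ falls: if $x_{T_a}\in(0,\sfrac23)$ the solution first crosses into $S_-$ and re-hits $S_0$ with $x>2$; the repelling interval $(\sfrac23,\sfrac43)$ is excluded; and for $x_{T_a}>\sfrac43$ one argues directly. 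In every case the paper then simply \emph{assumes} the solution follows a sliding interval of type \cref{si-}, which delivers $y(4n)=0$ and the overlap with $y_d$.

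Your version is more explicit: you pick the launch so that the landing is guaranteed (via \cref{lempnl}) to fall in the attracting strip $S_{0A}$, and you name the specific branch $\big(\sfrac{4n}{3},4n\big)$ of $\Lambda^{N}$ along which the sliding proceeds. You are also more candid than the paper about the one genuinely non-Filippov step, namely that the sliding continues past the fold $x=4n-\sfrac23\in T_+$ on the nonlinear branch; the paper handles this by the same device (an explicit choice within the inclusion, exactly as in the construction of $y_d$ in \cref{nlps}), just without flagging it. So the two arguments are essentially equivalent, with yours trading the paper's case analysis for an explicit family of initial data.
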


\begin{proof} Any solution starting in $S_+$ hits $S_0$ in finite time, say $x=x_{T_a}$.  First let $x_{T_a} \in \left(0,\sfrac{2}{3}\right)$. As this is a crossing interval the system will continue its evolution in $S_-$, hitting again $S_0$ in $x > 2$. Assume that this solution, $y(x)$, follows a sliding interval of the type \cref{si-} once on $S_0$, then $y(4n)=0$ for a certain $n \in\mathbb{N}$, and thus intersects the periodic solution described in \cref{nlps}. The result follows assuming $y(x)$ to behave identically to such a periodic solution from the value of $x$ at which they intersect, which is allowed by the periodicity of the vector field. Now notice that $x_{T_a} \notin \left(\sfrac{2}{3},\sfrac{4}{3}\right)$, as this is a repelling sliding interval. Finally, let  $x_{T_a} > \sfrac{4}{3}$: assuming that $y(x)$ follows a sliding interval of the type \cref{si-} once on $S_0$, the result follows as in the case $x_{T_a} \in \left(0,\sfrac{2}{3}\right)$. 
\end{proof}

\Cref{fig:solsnl} illustrates the solutions described in \cref{nlps} and \cref{nls}.

\begin{remark} \label{therem} 
For this nonlinear system, because the sliding manifold $\Lambda^{N}$ is not 4-periodic in $x$ but instead consists of branches of ever increasing length, one must be careful in assuming that orbits that appear periodic for small $x$ will remain so. An orbit is simulated in \cref{fig:solsnl} that may appear $8$-periodic (and we note this partially overlaps the period 4 orbit). Concatenating solutions in $S_+$ and $S_-$ with branches of $\Lambda^{N}$ may suggest that this orbit is able to repeat every $\Delta x=8$, and yet to properly analyse the sliding dynamics requires close inspection of the dynamics on $S_0$ that can only be achieved by regularization.
When we do so in \cref{sec:rdnl} we shall see that exit from $S_0$ into $S_+$ becomes impossible for large $x$.  
\end{remark}

\begin{figure}[t]\centering
\includegraphics[width=0.8\textwidth]{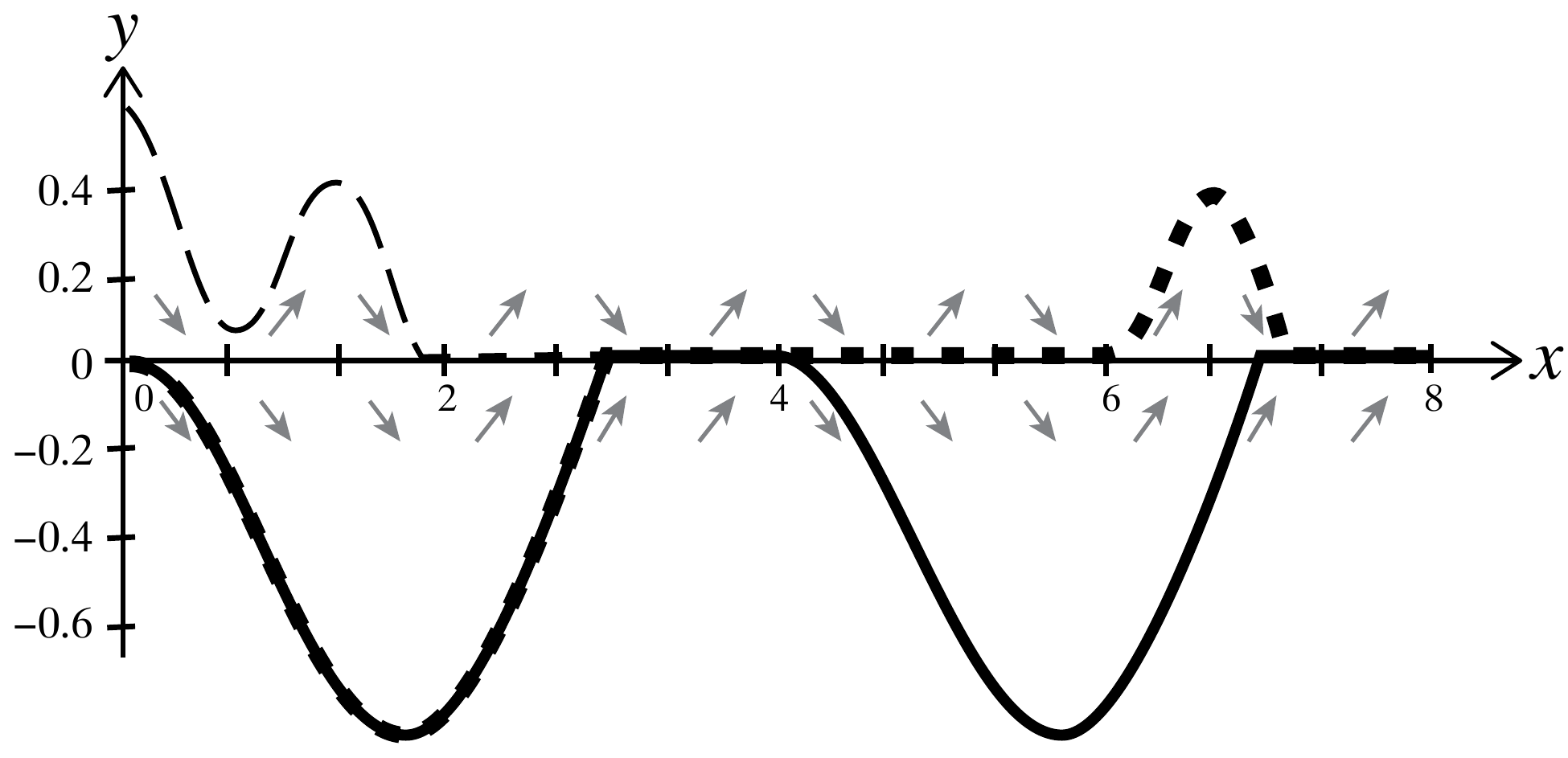}
\caption{Orbits of the nonlinear switching system, showing: a period 4 sliding solution (full curve, as described by \cref{nlps}) and a solution attracted onto it in finite time (dashed curve, as described by \cref{nls}), and a sliding solution (dotted curve) illustrating \cref{therem}) part of which coincides with the period 4 orbit and which appears over this short duration to be 8-periodic. All simulated for $a=0.5$.}\label{fig:solsnl}
\end{figure}



\section{Dynamics of a regularized system} \label{sec:rd}

The dynamics of our two switching systems is only partially resolved by the analysis in \cref{sec:idl} and \cref{sec:idnl}. To obtain a complete picture, particularly of solutions that involve sliding along the switching threshold, we must regularize the discontinuity. This involves replacing the switching threshold by a switching layer of width order $\epsilon > 0$, namely 
\begin{align}
S_0^\epsilon:=\left\{(x,y) \in \mathbb{R}^2\;:\; -\epsilon < y <  \epsilon \right\}\;.
\end{align}
Here we shall do this in a manner that yields a smooth system, replacing the switching multiplier $\lambda=\sign(y)$ by a transition function $\psi \left(\frac{y}{\epsilon}\right)$ satisfying
\begin{subequations} \label{psi}
\begin{align} 
              \label{psib} \psi ' \left(\frac{y}{\epsilon}\right) > 0 \ \ \mbox{for} \ \ \left|y\right|< \epsilon\;, \\
							\label{psic} \psi \left(\frac{y}{\epsilon}\right)=\mbox{sign}(y) \ \ \mbox{for} \ \ \left|y\right| \geq \epsilon\;, \\
							\label{psie} \mbox{sign}\left(\psi '' \left(\frac{y}{\epsilon}\right) \right) =  -\mbox{sign}(y) \ \ \mbox{for} \ \ \left|y\right| = \epsilon\;,
\end{align}
\end{subequations}
from which it follows that 
$|\psi (\frac{y}{\epsilon})|< 1$ for $|y|< \epsilon$ and 
$\psi ' (\frac{y}{\epsilon}) =  0$ for $|y| \geq \epsilon$. 

The half-planes corresponding to $S_\pm,$ defined in \cref{ulhp} for the discontinuous system, become
\begin{equation} \label{sreg} S^\epsilon_\pm:=\left\{(x,y) \in \mathbb{R}^2\;:\; \pm y \geq \epsilon \right\}.
\end{equation}

The regularization of the system \cref{s1d} is simply obtained by substituting $\psi(y/\eps)$ in place of $\lambda$. In terms of a fast variable $v={y}/{\epsilon}$ this becomes
\begin{subequations} \label{s1dr}
\begin{align}
\label{s1dra} \dot x&=1\;,
\\
\label{s1drb} \epsilon \dot v&=-a\epsilon  v-f_i\left(x,\psi \left(v\right)\right)\;,
\end{align}
\end{subequations}
where $f_i$ and $f_L$ are the functions defined in \cref{1f}. 

Re-scaling to a fast time variable $\tau=t/\epsilon$ yields the fast subsystem
\begin{subequations} \label{s1drf}
\begin{align}
x'&=\epsilon\;,
\\
v'&=-a\epsilon  v+f_i\left(x,\psi \left(v\right)\right)\;,
\end{align}
\end{subequations}
denoting the time derivative as $x'\equiv\eps\dot x$. The limit $\epsilon \rightarrow 0$ results in
\begin{subequations} \label{s1drff}
\begin{align}
x'&=0\;,
\\
v'&=f_i\left(x,\psi \left(v\right)\right)\;,\qquad\quad\;
\end{align}
\end{subequations}
which provides the fast dynamics along the $v$ direction, parameterized by $x$, outside the slow critical manifold
\begin{equation} \label{cm} \Lambda_0^{i}=\left\{\left(x,v\right): \ f_i\left(x,\psi \left(v\right)\right)=0, \ |v| < 1 \right\}\;,
\end{equation}
which is the equivalent of the sliding manifold $\Lambda^{i}$ of the discontinuous system defined in \cref{smi}, where again $i$ labels the linear or nonlinear model as in \cref{1f}; the subscript `$0$' denotes that this is assocated with the $\eps=0$ system i.e. the {\it critical manifold} of \cref{s1dr}, while the {\it slow manifolds} for $\eps>0$ will be denoted $ \Lambda_\eps^{i}$. The critical manifold is an invariant of the limiting system (i.e. with $\epsilon=0$) wherever it is normally hyperbolic, i.e. where
\begin{equation} \label{hy} \frac{\partial }{\partial v} f_i\left(x,\psi \left(v\right)\right) \neq 0,\end{equation}
and the positivity (or negativity) of this partial derivative indicates that $\Lambda_0^r$ is unstable (or stable).


Notice also that the analogues of $S_0^\epsilon$ and $S_\pm^\epsilon$ in the $(x,v)$ plane are
\begin{subequations} \label{vregs}
\begin{align} 
\label{vreg0} V_0:=&\left\{(x,v) \in \mathbb{R}^2\;:\; |v| <  1 \right\}, \\
\label{vreg} V_\pm:=&\left\{(x,v) \in \mathbb{R}^2\;:\; \pm v \geq 1 \right\}.
\end{align} 
\end{subequations}
(These are independent of $\eps$ because it has been scaled out by the $v$ coordinate). 

In turn, for $\epsilon \neq 0$ the fold points on $v=\pm 1$ lie at $O(\epsilon)$ distance from those of the limiting ($\eps=0$) system, namely, 
\begin{equation} \label{foldi} x_{n}^+:=\frac{2n}{3}, \ \ \ x_n^-:=2n.
\end{equation}
Indeed, it follows from \cref{s1drb} that, at the fold points, $x_{\epsilon,n}^\pm$,  one has
\begin{align}
 -a\epsilon \left(\pm 1\right)-\sin \left(\pi \omega_\pm x_{\epsilon,n}^\pm\right)=0,
\end{align}
where it is assumed that $a\epsilon <<1$, then 
\begin{equation} \label{foldr}
x_{\epsilon,n}^\pm:=x_n^\pm \pm \left(-1\right)^{n+1}\frac{ 1}{\pi \omega_\pm}\arcsin (a\epsilon).
\end{equation}
This is illustrated in \cref{fig:folds}.

\begin{figure}[t]\centering
\includegraphics[width=0.8\textwidth]{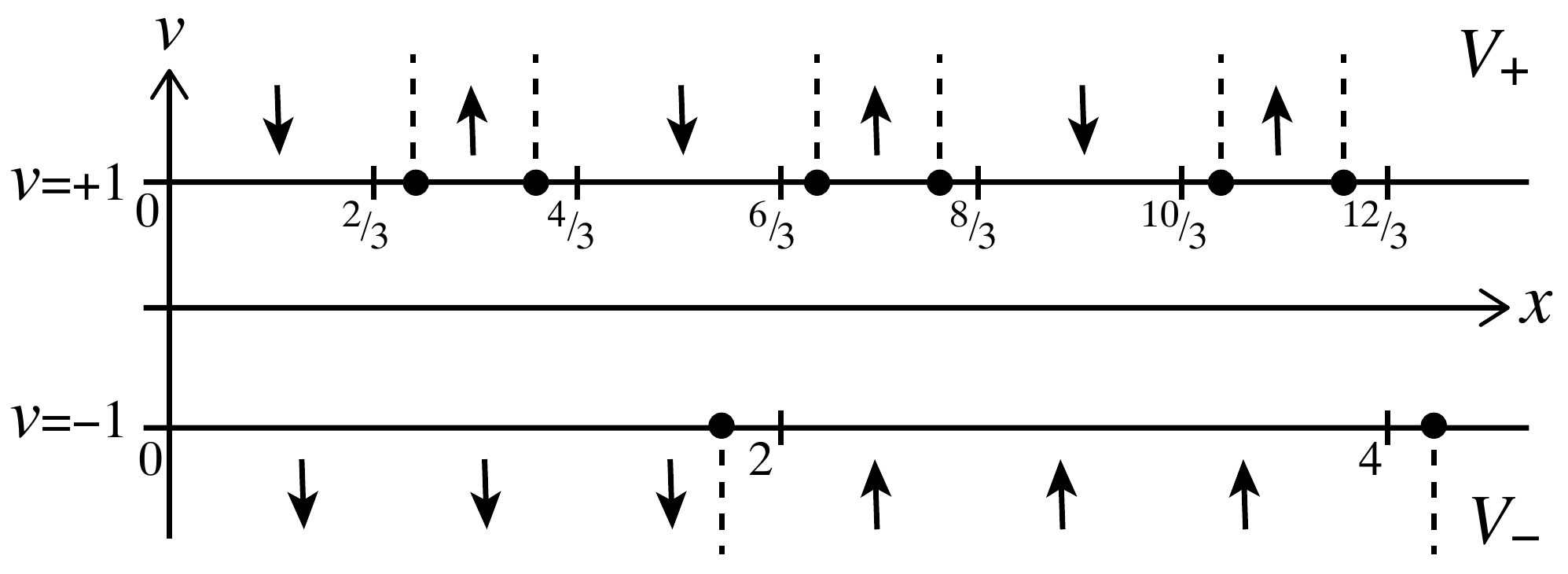}
\caption{Fold points (bold circles) and direction of the vector field for the regularized case.}\label{fig:folds}
\end{figure}

We show below that the qualitative dynamics of the linear switching system (using $f_L$ from \cref{1flin}) is preserved under regularization, including both sliding and non sliding periodic solutions. The qualitative dynamics of the nonlinear switching system (using $f_N$ from \cref{1fnon}) is also preserved, but instead of an exact period 4 solution, there exists an orbit asymptotically approaching period 4, with its segment in the layer $V_0$ deforming slightly with increasing $x$. Moreover we are able to show that under regularization:
\begin{itemize} 
\item[i)] ageing persists, such that branches of the sliding manifold not only increase in length with $x$ but do not overlap, so the system is not periodic inside the layer $V_0$ and fully periodic orbits are impossible; 
\item[ii)] for $x \rightarrow +\infty$ all solutions evolve towards a {\it periodic object} (see \cref{vr} and \cref{rempo}) which, for $\epsilon \rightarrow 0$, becomes the period 4 orbit of the discontinuous system described in \cref{nlps};
\item[iv)] solutions with any initial condition eventually lie in $V_- \cup V_0$ (similar to the behaviour illustrated by the full curve and dashed curve in \cref{fig:solsnl} but without periodicity). Hence period 8 orbits of the type portrayed by the dotted curve in \cref{fig:solsnl} are impossible.
\end{itemize}

As in previous sections we take the linear and nonlinear systems in turn. 


\subsection{Regularization of the linear switching system} \label{sec:rdl}

Take the system \cref{s1dr} using the forcing $f_L$ defined in \cref{1flin} (or equivalently substitute $\lambda\mapsto\psi(y/\eps)$ into \cref{invlin}). We obtain
\begin{subequations} \label{lr}
\begin{align}
\label{lra} \dot x&=1,
\\
\label{lrb} \epsilon \dot v&=-a\epsilon v -\left[1+\left(1+\psi\left(v\right)\right)\cos \pi x \right]\sin \sfrac{\pi x}{2}.
\end{align}
\end{subequations}

\begin{proposition} The critical manifolds of \cref{lr} (in the limit $\epsilon = 0$) are given by
\begin{equation} \label{lrsm} \Lambda_0^{L}:=\left\{(x,v) \in V_0\;: \; \psi\left(v\right)= -1-{\sec \pi x}, \ x \in \left(\sfrac{2}{3}+2n,\sfrac{4}{3}+2n\right), \ n \in \mathbb{N}\right\}.
\end{equation}
\end{proposition}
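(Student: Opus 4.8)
The plan is to mirror, in the regularized setting, the proof of the earlier proposition that identified $\Lambda^{L}$ for the discontinuous system. By \cref{cm} applied to \cref{lrb}, a point $(x,v)$ with $|v|<1$ lies on $\Lambda_0^{L}$ exactly when $\left[1+(1+\psi(v))\cos\pi x\right]\sin\sfrac{\pi x}{2}=0$, so the first step is to factor this condition. The factor $\sin\sfrac{\pi x}{2}$ vanishes precisely on the vertical lines $x=2n$, $n\in\mathbb{N}$; there $f_L(x,\psi(v))\equiv 0$ and also $\frac{\partial}{\partial v}f_L(x,\psi(v))=\psi'(v)\cos\pi x\,\sin\sfrac{\pi x}{2}\equiv 0$, so \cref{hy} fails identically. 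Exactly as in the discontinuous case, where the analogous points $x=2n$ ``do not give motion along $S_0$'', these degenerate lines carry no slow dynamics and are excluded from $\Lambda_0^{L}$; one can moreover note that in a punctured neighbourhood of each of them the remaining factor is bounded below by a positive constant (near $x=2n$, $\cos\pi x$ is close to $1$, so $1+(1+\psi(v))\cos\pi x$ is close to $2+\psi(v)>1$, uniformly in $|v|<1$), so no genuine branch of the critical manifold accumulates there.

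Away from the lines $x=2n$ one has $\sin\sfrac{\pi x}{2}\neq 0$, and since $\cos\pi x=0$ would force the bracket to equal $1\neq 0$, we also have $\cos\pi x\neq 0$; hence the defining equation is equivalent to $\psi(v)=-1-\sec\pi x$. The second step is to decide for which $x$ this admits a solution with $|v|<1$. By \cref{psib} and \cref{psic} (together with the consequences recorded immediately after \cref{psi}), the restriction of $\psi$ to $(-1,1)$ is a strictly increasing bijection onto $(-1,1)$; therefore a solution $v\in(-1,1)$ exists, and is unique, if and only if $-1-\sec\pi x\in(-1,1)$, i.e. $0<-\sec\pi x<2$, i.e. $\cos\pi x<-\sfrac{1}{2}$. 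This last inequality holds exactly for $x\in\left(\sfrac{2}{3}+2n,\sfrac{4}{3}+2n\right)$, $n\in\mathbb{N}$, and combining it with the graph $\psi(v)=-1-\sec\pi x$ yields precisely the set \cref{lrsm}.

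I expect the only delicate point to be the treatment of the lines $x=2n$: the set literally defined in \cref{cm} contains them, so one must argue --- through the failure of normal hyperbolicity \cref{hy} and the fact that the genuine branch of $\Lambda_0^{L}$ stays bounded away from them --- that they are not part of $\Lambda_0^{L}$, consistently with how the discontinuous-system proposition disposed of the corresponding points. Everything else reduces to the elementary trigonometric inequality $\cos\pi x<-\sfrac{1}{2}$ already carried out in that earlier proof, now read through the monotone change of variables $\lambda=\psi(v)$.
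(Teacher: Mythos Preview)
Your argument is correct and follows essentially the same route as the paper: factor $f_L(x,\psi(v))=0$, dispose of the degenerate lines $x=2n$, and then reduce to the inequality $\cos\pi x<-\sfrac12$ via $\psi(v)=-1-\sec\pi x\in(-1,1)$. The paper's proof additionally computes $\frac{\partial}{\partial v}f_L(x,\psi(v))=-\psi'(v)\cos\pi x\,\sin\sfrac{\pi x}{2}$ to verify \cref{hy} on the resulting branches, which you discuss only in connection with the degenerate lines; this is not needed to identify the set \cref{lrsm} itself, but you may wish to include it for completeness since the paper uses it immediately afterwards in \cref{lrsmra}.
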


\begin{proof} It follows from \cref{cm} that the critical manifolds are defined by
\begin{align}
  \left[1+\left(1+\psi \left(v\right)\right)\cos \pi x\right]\sin\sfrac{\pi x}{2}=0, \quad |v|<1\;.
\end{align}
Therefore $-1 < \lambda=2\left(\frac{n}{x}-1\right) < 1$, implying $x \in \left(\frac{2n}{3},2n\right)$ for $n \in \mathbb{N}\setminus\{0\}$. However, by \cref{psi} the function $\psi$ satisfies $\left|\psi \left(v\right)\right|<1$, therefore
\begin{align}
 -1 < \psi \left(v\right)= -1-{\sec \pi x} < 1\;,
\end{align}
hence $0 < -\frac{1}{\cos \pi x} < 2$, implying $\cos \pi x < -\sfrac{1}{2}$ and therefore
\begin{align}
 x \in \left(\sfrac{2}{3}+2n,\sfrac{4}{3}+2n\right), \ \ n \in \mathbb{N}\;.
\end{align}
As in defining $\Lambda^{L}$ we may exclude the point where $\sin\sfrac{\pi x}{2}=0$, hence where $x=2n$ for $n \in \mathbb{N}$. 
Following \cref{hy}, $\Lambda_0^{L}$ is invariant where
\begin{equation} \label{hyl} 0 \neq  \frac{\partial }{\partial v} f\left(x,\psi \left(v\right)\right) = - \psi ' \left(v\right){\cos \pi x}\sin\sfrac{\pi x}{2}\;,\end{equation}
and since $\psi ' \left(v\right) >0$ for all $v \in (-1,1)$ by \cref{psib}, the result follows.
\end{proof}

Finally, from the sign of $\frac{\partial }{\partial v} f\left(x,\psi \left(v\right)\right)$ in \cref{hyl} we have that
\begin{subequations} \label{lrsmra}
\begin{align} 
&{\rm on}\;\; x \in  \left(\sfrac{2}{3}+4n,\sfrac{4}{3}+4n\right) 
\;\; \Lambda_0^{L} \;\; \mbox{is repelling}, \\ 
&{\rm on}\;\; x \in  \left(\frac{8}{3}+4n,\sfrac{10}{3}+4n\right) 
\;\; \Lambda_0^{L} \;\; \mbox{is attracting},
\end{align}
\end{subequations}
with $n \in\mathbb N$. These $x$-intervals match those obtained for $\Lambda^{L}$ in the discontinuous linear system in \cref{sec:idl} as illustrated in \cref{fig:sign}.

As an example, the critical manifolds corresponding to the $x$-intervals defined in \cref{lrsmra} for $n=1$, letting
\begin{equation} \label{qg}
\psi \left(v\right)=\left\{\begin{array}{cc} -1 & v < -1\;, \\ \sfrac{1}{2}v\left(3-v^2\right)  & -1 \leq v \leq 1\;, \\ 1 & v > 1 \;,\end{array} \right.
\end{equation}
are depicted in \cref{fig:lrsm}.

\begin{figure}[t]\centering
\includegraphics[width=0.8\textwidth]{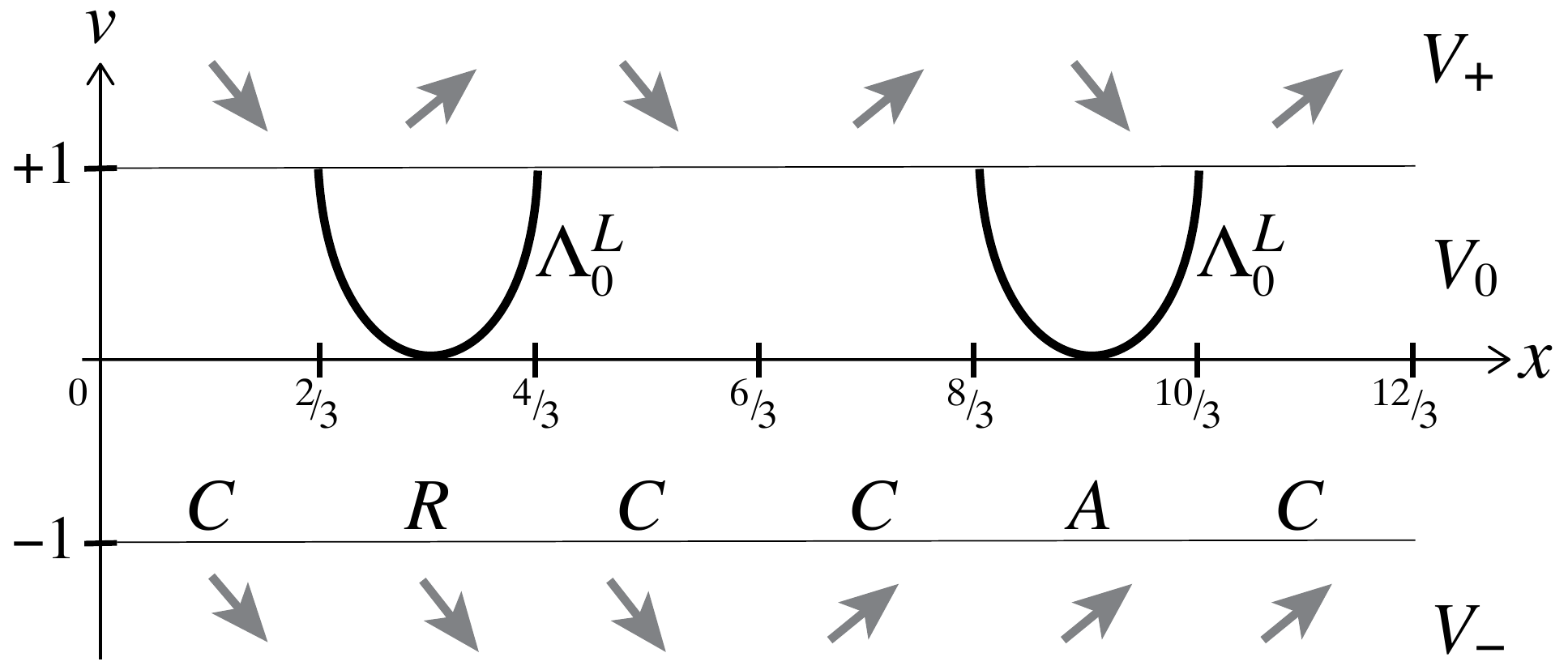}
\caption{Linear regularization, showing the switching layer $V_0$ and the sliding manifolds $\Lambda_0^{L}$ inside it, with a repelling branch (left) and attracting branch (right). Corresponding labels for regions of crossing and sliding from \cref{fig:sign} are shown, with the directions of the fields in $V_\pm$.}\label{fig:lrsm}
\end{figure} 

The persistence of the non-sliding and sliding periodic solutions are established below by analogous results to those of \cref{teo1} and \cref{teo2}.

\begin{theorem} \label{teo3} For all $a,\epsilon \in \mathbb{R}$ such that $0 < a\ll1$ and $0<\epsilon \ll 1$, system \cref{lr} has a non-sliding periodic solution, which is $4$-periodic and locally asymptotically stable.
\end{theorem}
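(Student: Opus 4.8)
The plan is to perturb the $\epsilon=0$ analysis of \cref{teo1} to small $\epsilon>0$ using geometric singular perturbation theory, treating \cref{lr} as a slow-fast system. First I would observe that the non-sliding periodic orbit of \cref{teo1} lives entirely in $S_+\cup S_-$, i.e. in $V_+\cup V_-$, away from the switching layer $V_0$, except for the two points where it crosses $S_0$ transversally. Outside the layer, $\psi(v)=\sign(v)$ identically by \cref{psic}, so the regularized system \cref{lr} coincides exactly with the discontinuous system \cref{invlin} in $V_\pm$, and the maps $P_\pm^a$ of \cref{sec:prelim} apply verbatim. The only thing that changes is the transition through the layer near the two crossing points: instead of an instantaneous crossing, a trajectory entering $V_0$ in a crossing region spends a time of order $\epsilon$ inside and exits on the other side, displaced by $O(\epsilon)$ in $x$.

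The key step is therefore to analyse the layer transition map. Near a point $(x_*,0)\in S_{0C}$ where the crossing orbit of \cref{teo1} passes, I would use the fact that $f_L(x_*,\lambda)$ has a definite sign independent of $\lambda\in[-1,1]$ (this is what "crossing region'' means, and it follows from \cref{fig:sign} / the computations in \cref{sec:S}), so by continuity $f_L(x,\psi(v))$ has that same sign for all $(x,v)$ with $x$ near $x_*$ and $|v|\le 1$. Then $v'=f_L(x,\psi(v))$ in the fast system \cref{s1drff} is bounded away from zero with fixed sign, so $v$ traverses $[-1,1]$ (or $[1,-1]$) monotonically; a regular-perturbation / Fenichel-type estimate gives that the transition map sending the entry point $x_{\rm in}$ on $v=\pm 1$ to the exit point $x_{\rm out}$ on $v=\mp 1$ is of the form $x_{\rm out}=x_{\rm in}+O(\epsilon)$, smoothly depending on $(x_{\rm in},\epsilon)$ with derivative $1+O(\epsilon)$ in $x_{\rm in}$. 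Composing the two such layer maps with the two smooth branch maps $P_\pm^a$ (restricted to their appropriate intervals $I_\mp$, which contain the relevant crossing values by \cref{lem1}, \cref{lem2}) yields a full return map $\widetilde P(x,a,\epsilon)$ with $\widetilde P(x,a,0)=P(x,a)$, which is $C^1$ jointly in $(x,a,\epsilon)$.

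From here the argument mirrors \cref{lem2} and \cref{lem3} exactly. By \cref{lem2} the equation $P(x,a)-(x+4)=0$ has, for $a\in(0,a_l)$, a solution $x=x(a)$ with $\partial_x P(x(a),a)\ne 1$ (indeed $\partial_x P<1$ by \cref{lem3}); since $\widetilde P$ is a $C^1$ perturbation of $P$, the implicit function theorem applied to $\widetilde P(x,a,\epsilon)-(x+4)=0$ at $(x(a),a,0)$ produces, for each such $a$ and all sufficiently small $\epsilon>0$, a unique root $x=\tilde x(a,\epsilon)$ near $x(a)$, giving a $4$-periodic orbit of \cref{lr}. Local asymptotic stability follows because the derivative of the return map is $\partial_x\widetilde P(\tilde x,a,\epsilon)=\partial_x P(x(a),a)+O(\epsilon)$, which lies in $(0,1)$ for $\epsilon$ small by \cref{lem3}, so the fixed point is attracting; one must also note that the orbit really is non-sliding, i.e. its layer segments stay in the crossing part of $V_0$ and never limit onto a branch of $\Lambda_0^L$, which is immediate from the sign-definiteness of $f_L$ established above.

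The main obstacle I anticipate is making the layer transition estimate fully rigorous and uniform — in particular verifying that the crossing orbit of \cref{teo1}, for the whole range $a\in(0,a_l)$, hits $S_0$ strictly inside crossing subintervals of $S_{0C}$ (bounded away from the tangency sets $T_\pm$ and from the sliding regions $S_{0A},S_{0R}$), so that the sign of $f_L(x,\psi(v))$ is uniformly bounded away from zero on the relevant $(x,v)$-neighbourhood and the $O(\epsilon)$ bounds on the transition map are genuinely uniform in $a$ (possibly at the cost of shrinking $a_l$). Once that uniform transversality of the crossing is in hand, the rest is a routine application of the implicit function theorem and elementary stability analysis of a one-dimensional map.
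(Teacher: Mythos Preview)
Your proposal is correct and follows essentially the same strategy as the paper: decompose the regularized return map into outer pieces (which coincide exactly with the discontinuous maps $P_\pm^a$ since $\psi\equiv\pm1$ on $V_\pm$) and layer-transition pieces that tend to the identity in $C^1$ as $\epsilon\to0$, then invoke \cref{teo1}. The paper's proof is terser---it simply asserts that the transition components $P_{\epsilon-,k}^a$, $k=1,3$, limit to the identity and that ``standard results on the regularity of initial conditions and transversality'' give $P_\epsilon\stackrel{C^1}{\to}P$---whereas you supply the underlying mechanism (sign-definiteness of $f_L$ on $S_{0C}$ forcing monotone $O(\epsilon)$-time traversal of the layer) and flag the need for uniform transversality, which the paper leaves implicit.
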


\begin{proof} 
In the discontinuous system the existence of a periodic orbit, given in \cref{teo1}, was proven in \cref{lem2} and \cref{lem3} using the contractivity of a Poincar\'e map $P(x,a)=P_+^a\left(P_-^a(x)\right)$ defined in \cref{Pdisc} for $0 < a \ll 1$. For the regularized system we define an analogous map $P_\epsilon(x,a)=\left(P_{\epsilon +}^a \circ P_{\epsilon -}^a\right)(x)$. 
We may decompose $P_-^a$ and $P_{\epsilon -}^a$ each into three components 
\begin{align}
  P_{\epsilon -}^a:=  P_{\epsilon -,3}^a \circ P_{\epsilon -,2}^a \circ P_{\epsilon -,1}^a\;, \quad P_{-}^a:=  P_{-,3}^a \circ P_{-,2}^a \circ P_{-,1}^a\;,
\end{align}
as illustrated in \cref{fig:maps2}. 
It is immediate that for $\epsilon \rightarrow 0$ we have $P_{\epsilon -,k}^a,P_{-,k}^a \rightarrow \mathbb{I}$ for $k=1,3$, while $P_{\epsilon -,2}^a=P_{-,2}^a$, for all $\epsilon$. Standard results on the regularity of initial conditions and transversality guarantee that $P_{\epsilon -}^ a \stackrel{\mathcal{C}^1}{\rightarrow} P_-^a$  for $\epsilon \rightarrow 0$. Defining similar component maps for $P_+^a$ and $P_{\epsilon +}^a$ we have also $P_{\epsilon +}^a \stackrel{\mathcal{C}^1}{\rightarrow} P_+^a$, which implies  $P_{\epsilon} \stackrel{\mathcal{C}^1}{\rightarrow} P$ for $\epsilon \rightarrow 0$. The result then follows immediately from \cref{teo1}.
\end{proof}

\begin{figure}[t]\centering
\includegraphics[width=0.7\textwidth]{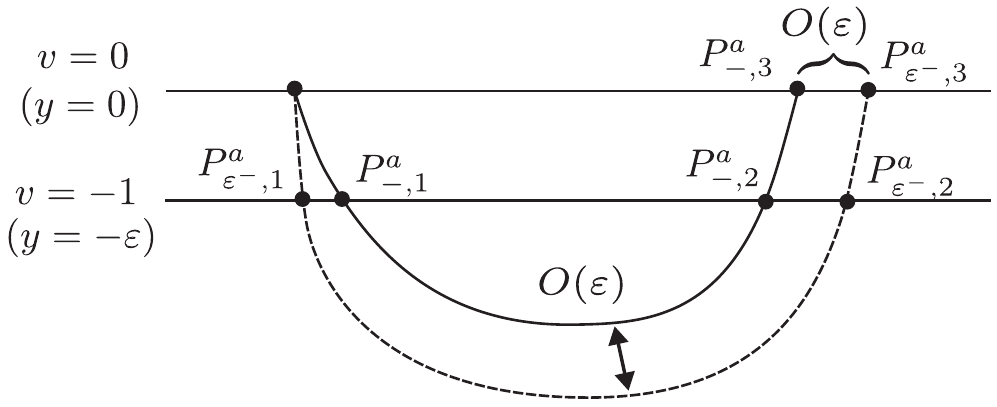}
\caption{The component mappings $P_{\epsilon -,k}^a$ and $P_{-,k}^a$ for $k=1,2,3$.}\label{fig:maps2}
\end{figure}

\begin{theorem} \label{teo4} For all $a,\epsilon >0$ such that $a \gg 1$ and $\epsilon \ll  1$, system \cref{lr} has a sliding $4$-periodic solution. Such a solution is locally asymptotically stable, with Lipschitz constant exponentially small in $\epsilon$.
\end{theorem}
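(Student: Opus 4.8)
The plan is to mirror the structure of \cref{teo2} for the discontinuous system, establishing the existence of a sliding 4-periodic orbit by tracking a trajectory that enters the attracting branch of $\Lambda_0^{L}$ (on $x\in(\sfrac83+4n,\sfrac{10}{3}+4n)$), slides along it, exits near the fold point $x_{\epsilon,5}^+ \approx \sfrac{10}{3}$, and—after passing through $V_+$, then $V_-$, then $V_+$ again—re-enters the next attracting branch. The $O(\epsilon)$-perturbed fold points are given explicitly in \cref{foldr}, so for $a\gg 1$ and $\epsilon$ sufficiently small the three transition maps through $V_\pm$ are $\mathcal{C}^1$-close to the maps $P_\pm^a$ of the discontinuous system, exactly as in the proof of \cref{teo3}; hence \cref{lems1}, \cref{lems2}, \cref{lems3} carry over with the target intervals shifted by $O(\epsilon)$, and the composition $P_{\epsilon+}^a\circ P_{\epsilon-}^a\circ P_{\epsilon+}^a$ still maps the right-hand fold neighbourhood of the attracting branch into the interior of the next attracting branch. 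This gives existence of the sliding periodic orbit.

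For stability I would argue as follows. On the attracting branch of the critical manifold, Fenichel theory gives a slow manifold $\Lambda_\epsilon^{L}$ that is normally hyperbolic attracting, with contraction rate determined by $\frac{\partial}{\partial v}f_L(x,\psi(v))$ in \cref{hyl}; since $\psi'(v)>0$ and $|\cos\pi x\,\sin\sfrac{\pi x}{2}|$ is bounded below on the interior of the attracting $x$-interval, any two trajectories that both reach the layer $V_0$ in this interval are exponentially ($e^{-c/\epsilon}$-type) close when they exit near the fold. Composing this with the $\mathcal{C}^1$-bounded (in fact, for $a\gg1$, contracting by a factor $\approx e^{-4a}$ as in \cref{partialPxx}) return through $V_+$–$V_-$–$V_+$ shows the full return map on a transversal section through the attracting branch has derivative of size $O(e^{-c/\epsilon})$, which is the claimed Lipschitz constant exponentially small in $\epsilon$. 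One must be slightly careful that trajectories genuinely fall into $V_0$ within the attracting interval rather than skimming past the fold; this is handled by the continuity/transversality estimates already used, together with the fact (from \cref{lems3}) that the pre-image interval lands strictly inside $(\sfrac{20}{3},\sfrac{20}{3}+\delta_3)$, comfortably away from the repelling fold.

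The main obstacle I anticipate is the passage near the fold point where the trajectory leaves the attracting slow manifold and enters $V_+$: near a fold the Fenichel estimates degenerate, and one needs the standard entry–exit (Mishchenko–Rozov, or blow-up) analysis to confirm that the exit point depends on the entry data in a way that is still strongly contracting—indeed, trajectories entering the attracting branch anywhere in its interior are funneled to within $O(\epsilon^{2/3})$ of the fold, and the residual spread contracts further on the next slow passage. Making the constant $c$ in the $e^{-c/\epsilon}$ bound explicit, and checking it survives three fold passages per period, is where the real work lies; everything else reduces to the already-established discontinuous estimates plus routine $\mathcal{C}^1$-closeness of the regularized flow outside the layer.
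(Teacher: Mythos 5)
Your strategy is structurally the same as the paper's: lift existence from the discontinuous sliding orbit of \cref{teo2} via $\mathcal{C}^1$-closeness of the regularized transition maps (as in \cref{teo3}), and derive the exponentially small Lipschitz constant from time spent on the attracting slow manifold inside the layer. Where you propose to build the fold/entry--exit analysis from scratch using Fenichel theory plus blow-up, the paper instead cites theorem~2.1 of \cite{bonet2016} directly, which already packages the crossing$\leftrightarrow$sliding passage of a regularized Filippov system into a map with exponentially small Lipschitz constant in $\epsilon$. Its decomposition of the Poincar\'e map into crossing-to-crossing pieces (Lipschitz with constant independent of $\epsilon$, as in \cref{teo3}) and crossing-to-sliding / sliding-to-crossing pieces (exponentially small Lipschitz constant by the cited theorem) is in substance your decomposition into transits through $V_\pm$ plus a slide along $\Lambda_\epsilon^{L}$; both yield the conclusion. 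So your route is sound, just longer: the ``real work'' you flag near the fold is precisely what \cite{bonet2016} supplies.

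Two small corrections. First, the orbit has one fold passage per period, not three: it exits the attracting branch near the right fold at $x\approx\sfrac{10}{3}$, crosses the layer transversally at $x_1$ and $x_2$ (no slow-manifold contact there), and lands back on the next attracting branch strictly in its interior --- by \cref{lems3} the landing is in $\left(\sfrac{20}{3},\sfrac{20}{3}+\delta_3\right)$ with $\delta_3$ fixed independent of $\epsilon$, so as $\epsilon\to0$ the re-entry is bounded away from the left fold, and the entry--exit delicacy lives at a single fold. Second, the $e^{-4a}$ contraction in \cref{partialPxx} belongs to the non-sliding orbit of \cref{lem3} and is not established for the sliding orbit's crossing maps; the paper claims only an $\epsilon$-independent Lipschitz bound there, which is all that is needed since the sliding segment alone supplies the dominant $e^{-c/\epsilon}$ contraction.
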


\begin{proof} According to \cref{teo2}, sliding periodic solutions in the discontinuous case have crossing and sliding sections.
Following the proof of \cref{teo3}, the Poincar\'e maps $P$ and $P_\epsilon$ can be defined and expressed as a composition of a finite number of successive maps, each from crossing to crossing intervals, from crossing to sliding, or from sliding to crossing intervals. For the maps from crossing to crossing intervals, the discussion in the proof of \cref{teo3} applies, which means that they are Lipschitz continuous, with Lipschitz constant independent of $\epsilon$. For the crossing$\leftrightarrow$sliding maps, theorem  2.1 in \cite{bonet2016} applies directly, thus guaranteeing that the Lipschitz constant is exponentially small in $\epsilon$.
\end{proof}

In \cref{fig:lr1} we simulate a $4$-periodic non-sliding solution for $a=0.01$, and a $4$-periodic sliding periodic solution for $a=2$. The two lower panels show a magnification of the trajectories passing through the switching layer. Notice the equivalence with the solutions plotted in \cref{fig:a=2} from the discontinuous system.

\begin{figure}[t]\centering
\includegraphics[width=0.8\textwidth]{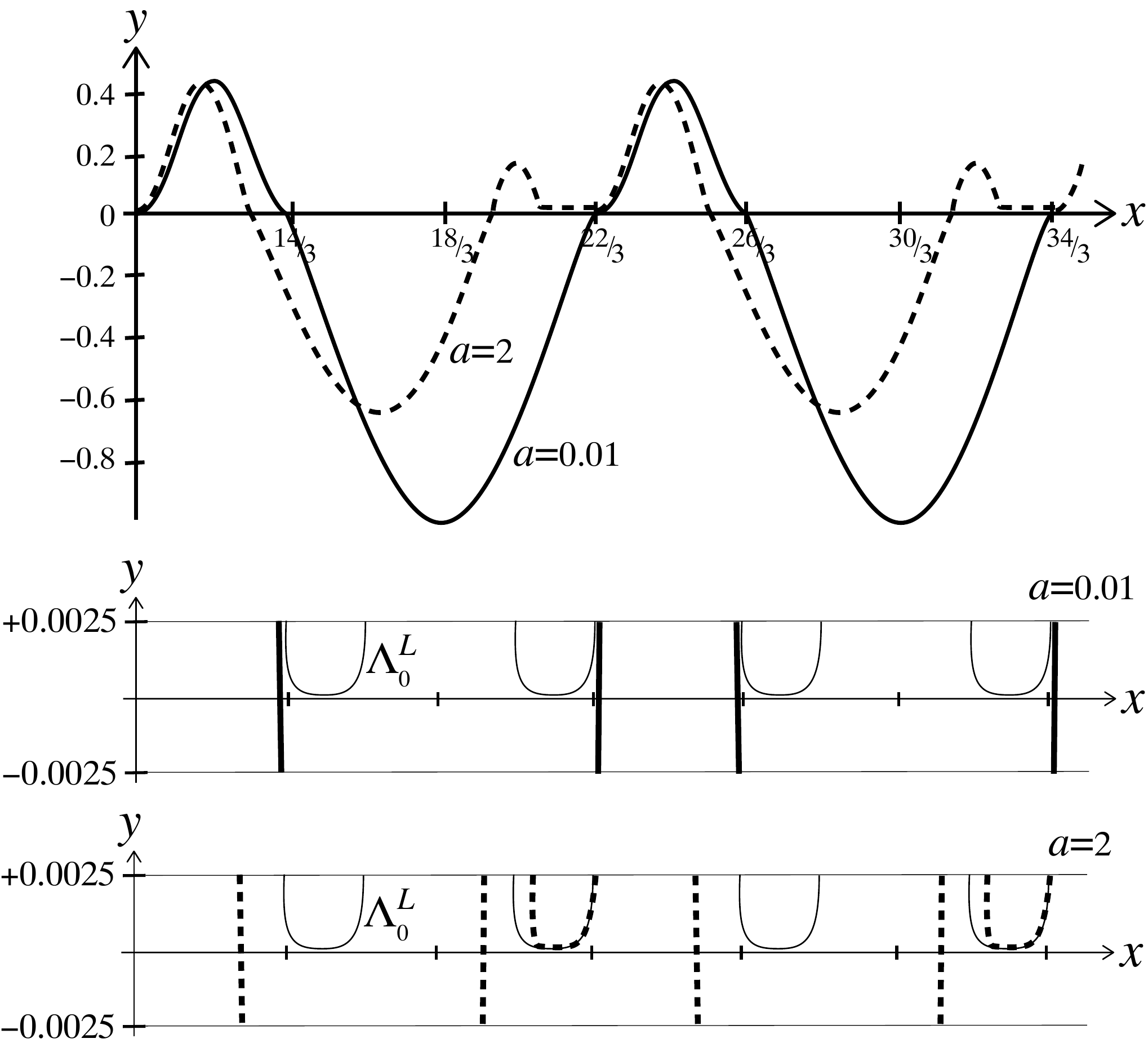} 
\caption{Linear regularization, showing a non-sliding period 4 solution (full curve) for $a=0.01$ passing through the point $\left(x_0,y_0\right)=\left(0,0.42
\right)$, and a sliding period 4 solution (dotted curve) for $a=2$ passing through the point $\left(x_0,v_0\right)=\left(-\sfrac{2}{3},0.0019
\right)$; both simulated from \cref{lr} with $\epsilon=0.0025$. The lower panels are magnifications of the switching layer, showing the solution passing through the layer, and in the latter case evolving close to the critical manifold $\Lambda_0^{L}$.}\label{fig:lr1}
\end{figure}


%


\subsection{Regularization of the nonlinear switching system} \label{sec:rdnl}

Take the system \cref{s1dr} using the forcing $f_N$ defined in \cref{1fnon} (or equivalently substitute $\lambda\mapsto\psi(y/\eps)$ into \cref{invnon}). We obtain
\begin{subequations} \label{nlr}
\begin{align}
\label{nlra} \dot x&=1,
\\
\label{nlrb} \epsilon \dot v&=-a\epsilon v -\sin \left(\pi x \left[1+\sfrac{1}{2}\psi \left(v\right)\right]\right).
\end{align}
\end{subequations}

\begin{proposition} The critical manifolds of \cref{nlr} (in the limit $\epsilon = 0$) are given by
\begin{equation} \label{snl} \Lambda_0^{N}:=\left\{(x,v) \in V_0: \ \psi \left(v\right) = 2\left(\sfrac{n}{x}-1\right), \ x \in \left(\sfrac{2n}{3},2n\right), \ n \in \mathbb{N}\setminus\{0\}\right\}.
\end{equation}
\end{proposition}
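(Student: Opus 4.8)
The plan is to repeat, in the regularized setting, the elementary computation used for the discontinuous system in \cref{snon}, but now starting from the characterization \cref{cm} of the critical manifold. First I would substitute $f_N$ from \cref{1fnon} into the defining condition $f_N(x,\psi(v))=0$, which becomes $\sin\!\big(\pi x[1+\tfrac12\psi(v)]\big)=0$; this holds precisely when $x\big(1+\tfrac12\psi(v)\big)=n$ for some integer $n$, i.e. when $\psi(v)=2(n/x-1)$.

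Next I would impose the layer constraint $|v|<1$. By \cref{psib} and \cref{psic} the transition function $\psi$ is strictly increasing on $(-1,1)$ with $\psi(\pm1)=\pm1$, hence $|\psi(v)|<1$ there (as already noted after \cref{psi}). Substituting $\psi(v)=2(n/x-1)$ then gives $\tfrac12<n/x<\tfrac32$, equivalently $x\in(\tfrac{2n}{3},2n)$; and since $x>0$ on the region of interest this also forces $n\ge1$ (for $n\le0$ one would need $\psi(v)\le-2$). Thus the critical set is the union over $n\in\mathbb N\setminus\{0\}$ of the graphs $\{\psi(v)=2(n/x-1),\ x\in(\tfrac{2n}{3},2n)\}$, which is exactly \cref{snl} and recovers the same $x$-intervals as the discontinuous sliding manifold \cref{snon}.

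Finally I would verify normal hyperbolicity via \cref{hy}: differentiating, $\partial_v f_N(x,\psi(v))=\tfrac{\pi x}{2}\,\psi'(v)\cos\!\big(\pi x[1+\tfrac12\psi(v)]\big)$, and on $\Lambda_0^{N}$ the cosine argument equals $\pi n$, so this reduces to $\tfrac{\pi x}{2}\psi'(v)(-1)^n$. Since $\psi'(v)>0$ on $(-1,1)$ by \cref{psib} and $x>0$ on every branch, this derivative never vanishes, so each branch of $\Lambda_0^{N}$ is normally hyperbolic — attracting when $n$ is odd and repelling when $n$ is even — and the proposition follows. I do not expect a genuine obstacle here: the argument is a routine transcription of the proof of \cref{snon}, and the only points needing care are translating the strict bound $|\psi(v)|<1$ into the open $x$-interval and discarding the non-positive values of $n$.
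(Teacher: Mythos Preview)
Your proof is correct and follows essentially the same route as the paper's: solve $\sin\!\big(\pi x[1+\tfrac12\psi(v)]\big)=0$ for $\psi(v)$, translate $|\psi(v)|<1$ into the open interval $x\in(\tfrac{2n}{3},2n)$ with $n\ge1$, and check normal hyperbolicity by evaluating the cosine at $n\pi$. One small slip in your closing remark: with the fast dynamics $v'=-f_N$ (from \cref{s1drb}), the branch is \emph{attracting} when $\partial_v f_N>0$, i.e.\ when $n$ is even, and repelling when $n$ is odd---the opposite of what you wrote, and consistent with the paper's subsequent classification \cref{rsnl}--\cref{asnl}.
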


\begin{proof} It is straightforward from \cref{cm} that the critical manifolds are defined by
\begin{align}
  \sin \left(\pi x \left[1+\sfrac{1}{2}\psi \left(v\right)\right]\right)=0, \quad |v|<1\;.
\end{align}
The result follows proceeding as in the linear case, noting the manifolds are invariant everywhere, since
\begin{align} \nonumber 
\frac{\partial }{\partial v} f_N\left(x,\psi \left(v\right)\right) & = - \sfrac{\pi x}{2}\psi ' \left(v\right)\cos \left(\pi x \left[1+\sfrac{1}{2}\psi \left(v\right)\right]\right) \\ \label{hynl} &=- \sfrac{\pi x}{2}\psi ' \left(v\right)\cos n\pi= \frac{(-1)^{n+1}\pi x}{2}\psi ' \left(v\right) \neq 0.\end{align}
\end{proof}

It then follows immediately from \cref{hynl} that
\begin{subequations}
\begin{align} 
& \Lambda_0^{N} \;\; \mbox{is repelling}\;\;\;{\rm on}\;\;x \in \left(\frac{4n-2}{3},4n-2\right)\;,\label{rsnl}  \\
& \Lambda_0^{N} \;\; \mbox{is attracting}\;\;\;{\rm on}\;\;x \in \left(\frac{4n}{3},4n\right)\;,\label{asnl} 
\end{align}
\end{subequations}
with $n \geq 1$. 

Examples of the critical manifolds corresponding to $n=1,2 \ldots ,10,$ are shown in \cref{fig:nlrsm_ar}, again using \cref{qg}.

\begin{figure}[t]\centering
\includegraphics[width=0.8\textwidth]{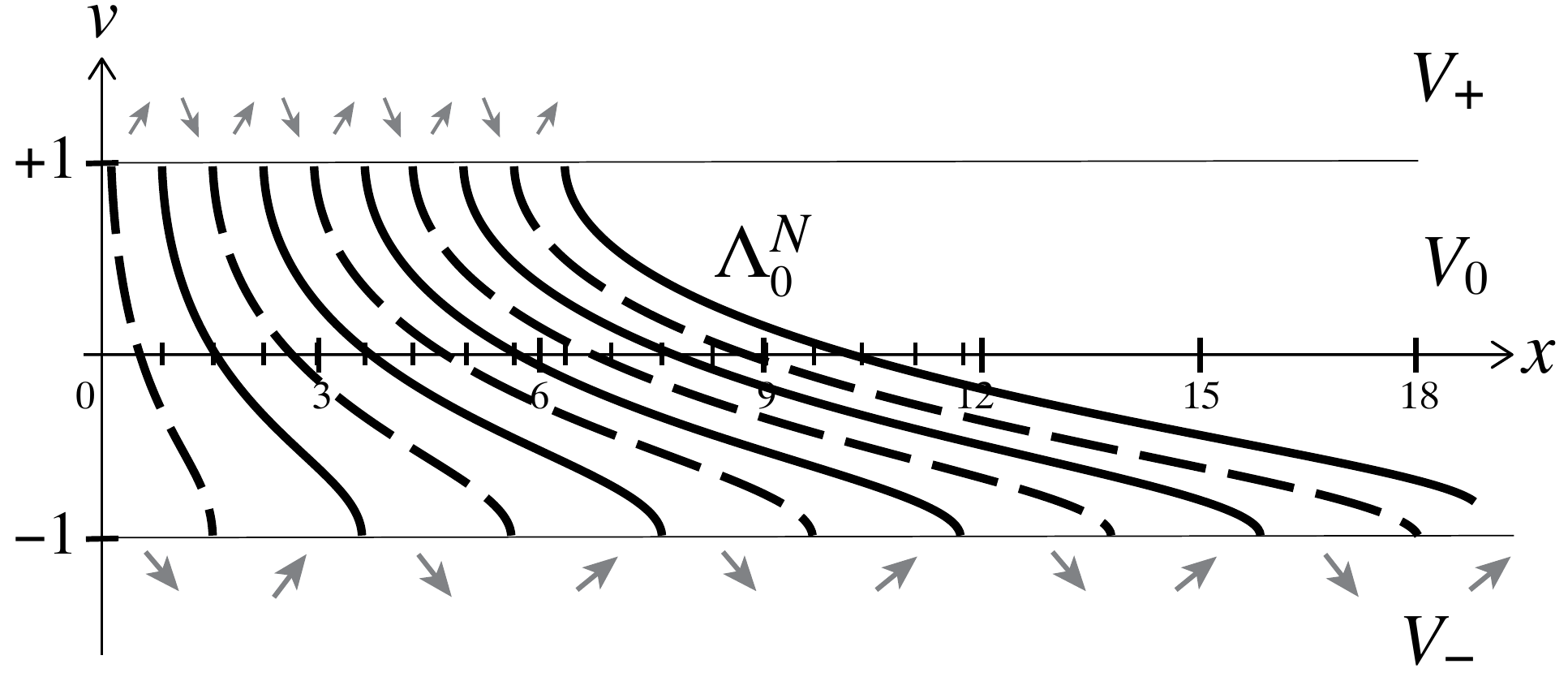}
\caption{Nonlinear regularization, showing the switching layer $V_0$ and the sliding manifolds $\Lambda_0^{N}$ inside it, with repelling branches (dashed) and attracting branches (full), for $n=1,2 \ldots 10$. Arrows indicate the direction of the vector fields in $S_\pm$.}\label{fig:nlrsm_ar}
\end{figure} 

Notice that sliding in the nonlinear system may take place irrespective of the directions of the vector fields outside the layer, due to the presence of slow invariant manifolds, making sense of the proliferation of branches of sliding in the  discontinuous system. The regularization shows that the slow critical manifolds corresponding to sliding do not intersect, but they do overlap in the sense that multiple branches co-exist at certain intervals of $x$ values. As in the discontinuous system, the union of all the sliding $x$-intervals covers $\mathbb{R}^+ \setminus \{\left(0,\sfrac{2}{3}\right)\}$, and the $n^{th}$ slow critical manifold exists over a range in $x$ of width
\begin{align}
\Delta x (n)=2n-\frac{2n}{3}=\frac{4n}{3}\;.
\end{align}
Thus the \emph{ageing} phenomenon as described in \cref{sec:idnl} for the discontinuous nonlinear switching system persists.

Notice also that:

\begin{itemize} \item [i)] Trajectories from $V_\pm$ enter the switching layer through the $x$-intervals where the corresponding vector field points towards $V_0$. Following \cref{foldr}, these are
\begin{subequations} \label{foldint}
\begin{align} 
\label{foldint+} & x \in \left(x_{\epsilon,2n}^+,x_{\epsilon,2n+1}^+\right) \ \ \mbox{from} \ \ V_+ \\
\label{foldint-} & x \in \left(x_{\epsilon,2n-1}^-,x_{\epsilon,2n}^-\right) \ \ \mbox{from} \ \ V_-.
\end{align} 
\end{subequations}
Hence, $V_0$ is accessed from $V_+$  between a stable and an unstable slow manifold, and from $V_-$ between an unstable and a stable manifold. The arrows in \cref{fig:folds} indicate the direction of the vector fields in $V_\pm$.

\item[ii)] Within the switching layer and for $x > \sfrac{2}{3}$, trajectories are eventually attracted exponentially towards a slow invariant manifold \cite{f79} that lies $\frac{\epsilon}{n}$-close (except in a neighbourhood of the upper and lower fold points, see \cite{bonet2016}, \cref{below1} and \cref{ufp}), to the corresponding stable critical manifold $\Lambda_0^{N}$,  
see \cref{fig:nlrsm_ar}).  Once trapped, they always move towards $V_-$ pushed by the vector field, and never get back to $V_+$. It is worth emphasizing that the $\frac{\epsilon}{n}$-closeness is consistent with the fact that the ageing phenomenon makes the 'density' of critical manifolds within the switching layer at a certain $x$-instant to be of order $n$.

\item[iii)] As pointed out at the beginning of Section \ref{sec:rd}, when $x \rightarrow +\infty$ the trajectories of \cref{nlr} tend to a periodic object (recall Definition \ref{vr} and Remark \ref{rempo}) that, for $\epsilon \rightarrow 0$, constitutes the periodic solution of the discontinuous system described in \cref{nlps}.

\end{itemize}

The above statements are supported as follows. 

Let us first establish a result that will be used in the analysis of the dynamics of \cref{nlr}.  Let $v_\pm\left(x,x_0^\pm\right)$ denote the solutions of \cref{nlr} with initial conditions $\left(x_0^\pm,\pm 1\right)$. It is immediate that $v_\pm$ evolve on $V_\pm$ for $x_0^+=x_{\epsilon,2n+1}^+$, $x_0^-=x_{\epsilon,2n}^-$, respectively, and eventually reach $V_0$ in finite time because of the fact that $a>0$. It then makes sense, in the spirit of the map $P_\pm^a$ introduced in \cref{ppm}, to define $\bar{P}_{\epsilon,\pm}^a$ as the map providing the next intersection point of $v_\pm$ with $v=\pm 1$, respectively.  


\begin{lemma} \label{below2}  For all $n \in \mathbb{N}$, we have
\begin{subequations} \label{entering}
\begin{eqnarray} 
\label{entering+} \bar{P}^a_{\epsilon,+}\left(x_{\epsilon,2n+1}^+\right) & \in & \left(x_{\epsilon,2n+2}^+,x_{\epsilon,2n+3}^+\right) \\
\label{entering-} \bar{P}^a_{\epsilon,-}\left(x_{\epsilon,2n}^-\right) & \in & \left(x_{\epsilon,2n+1}^-,x_{\epsilon,2n+2}^-\right).
\end{eqnarray} 
\end{subequations}
\end{lemma}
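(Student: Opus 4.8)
The plan is to treat the statement as the regularized counterpart of \cref{lempnl}. The key point is that inside $V_+=\{v\geq1\}$ the field of \cref{nlr} has $\psi(v)=1$, so in the coordinates $(x,y)=(x,\epsilon v)$ it reads $\dot x=1,\ \dot y=-ay-\sin(\pi\omega_+x)$, i.e.\ it is exactly the $S_+$-subsystem whose solutions $Y_+$ were computed in \cref{sec:prelim}; likewise inside $V_-$ it is the $S_-$-subsystem. Hence the field on $V_+$ is $\sfrac43$-periodic in $x$ and that on $V_-$ is $4$-periodic, and since a solution issued from a fold point with the parities appearing in the lemma stays in the corresponding strip (as noted just before the lemma), $\bar P^a_{\epsilon,+}$ commutes with translation by $\sfrac43$ and $\bar P^a_{\epsilon,-}$ with translation by $4$. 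Because $2n+1+k$ and $1+k$ (resp.\ $2n+k$ and $k$) have equal parity, \cref{foldr} gives $x^+_{\epsilon,2n+1+k}=x^+_{\epsilon,1+k}+\sfrac{4n}{3}$ and $x^-_{\epsilon,2n+k}=x^-_{\epsilon,k}+4n$ for $k=0,1,2$, so the $n$-th inclusion in \cref{entering+} (resp.\ \cref{entering-}) is the $\sfrac{4n}{3}$- (resp.\ $4n$-) translate of the $n=0$ one. It therefore suffices to prove
\begin{align*}
\bar P^a_{\epsilon,+}\bigl(x^+_{\epsilon,1}\bigr)\in\bigl(x^+_{\epsilon,2},x^+_{\epsilon,3}\bigr),\qquad
\bar P^a_{\epsilon,-}\bigl(x^-_{\epsilon,0}\bigr)\in\bigl(x^-_{\epsilon,1},x^-_{\epsilon,2}\bigr).
\end{align*}

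For these two $n=0$ inclusions I would pass to the limit $\epsilon\to0$. While a solution of \cref{nlr} stays in $V_\pm$ its $(x,y)$-trajectory obeys the $\epsilon$-independent $S_\pm$-flow, so by continuous dependence on initial data (using $x^+_{\epsilon,1}\to\sfrac23$ and $x^-_{\epsilon,0}\to0$ from \cref{foldr}) the solution issued from $(x^+_{\epsilon,1},\epsilon)$, resp.\ $(x^-_{\epsilon,0},-\epsilon)$, converges uniformly on compact $x$-intervals to $Y_+(\cdot,\sfrac23)$, resp.\ $Y_-(\cdot,0)$. Now $Y_+(\cdot,\sfrac23)$ stays in $y\geq0$ and first returns to $y=0$ at $x=P_+^a(\sfrac23)$, while $Y_-(\cdot,0)$ stays in $y\leq0$ and first returns at $x=P_-^a(0)$; both returns are transversal since there $\dot y=-\sin(\pi\omega_\pm x)\neq0$ (it is negative at $P_+^a(\sfrac23)\in(\sfrac43,2)$ and positive at $P_-^a(0)\in(2,4)$). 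Transversality makes the first return to the level $y=\pm\epsilon$ depend continuously on $\epsilon$, so $\bar P^a_{\epsilon,+}(x^+_{\epsilon,1})\to P_+^a(\sfrac23)$ and $\bar P^a_{\epsilon,-}(x^-_{\epsilon,0})\to P_-^a(0)$. The proof of \cref{lempnl} establishes precisely $P_+^a(\sfrac23)\in(\sfrac43,2)$ (its intermediate bound $P_+^a(\sfrac{4n-2}{3})\in(\sfrac{4n}{3},\sfrac{4n+2}{3})$ at $n=1$) and $P_-^a(0)\in(2,4)$; since these limit points are interior to the open intervals while by \cref{foldr} the endpoints satisfy $x^+_{\epsilon,2}\to\sfrac43$, $x^+_{\epsilon,3}\to2$, $x^-_{\epsilon,1}\to2$, $x^-_{\epsilon,2}\to4$, the two $n=0$ inclusions hold for all sufficiently small $\epsilon$, with a threshold independent of $n$ by the reduction above.

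The main obstacle is the tangential departure from the fold: $x^+_{\epsilon,1}$ is by definition a fold point, so $\dot v=0$ there, and the usual argument that a transversal crossing yields $\mathcal C^1$ dependence on data does not apply at the \emph{departure} point of the trajectory. This is repaired by using, as is already encoded in \cref{psie} and \cref{foldr}, that at these folds the field points into $V_\pm$: after a short initial arc (of horizontal length $O(\sqrt\epsilon)$, the fold being quadratic) the trajectory lies in the interior of $V_\pm$, where the smooth $S_\pm$-flow governs it and the subsequent return near $P_\pm^a(\cdot)$ is the clean transversal event used above. A more quantitative alternative, mirroring \cref{lems1}, is to write the solution through $(x^\pm_{\epsilon,*},\pm\epsilon)$ explicitly (the variant of \cref{sol3} carrying an extra $\pm\epsilon\,e^{-a\bar x}$ term), so that $\bar P^a_{\epsilon,\pm}(x^\pm_{\epsilon,*})$ is $x^\pm_{\epsilon,*}$ plus the first positive zero of
\begin{align*}
h_{\epsilon,\pm}(\bar x)=\bigl(\sin\varphi_\pm(x^\pm_{\epsilon,*})\pm\epsilon C_\pm\bigr)e^{-a\bar x}-\sin\bigl(\omega_\pm\pi\bar x+\varphi_\pm(x^\pm_{\epsilon,*})\bigr)\mp\epsilon C_\pm,\qquad C_\pm:=\sqrt{\omega_\pm^2\pi^2+a^2},
\end{align*}
then to sandwich it (using $0<e^{-a\bar x}<1$) between $\sin\varphi_\pm-\sin(\omega_\pm\pi\bar x+\varphi_\pm)$ and $-\sin(\omega_\pm\pi\bar x+\varphi_\pm)\mp\epsilon C_\pm$, to locate their zeros by $\epsilon$-corrected forms of \cref{tha}--\cref{thb}, and to compare with the endpoints; there the routine-but-delicate step is controlling the sign of $\sin\varphi_\pm$ at the shifted fold, which is where $a\epsilon\ll1$ enters.
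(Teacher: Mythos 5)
Your proposal is correct but follows a genuinely different route from the paper's. The paper writes down the explicit solution $v_\pm$ in $V_\pm$, reduces the return condition $v_\pm=\pm 1$ to finding the first positive zero of a function $h_\pm(x_a^\pm)$, and then constructs a bounding function $h_u^\pm>h_\pm$ (in the spirit of the auxiliary functions $h^0_\pm,h^\infty_\pm$ of \cref{sec:prelim}) whose zero set is exactly $\{2m/\omega_\pm\}$; taking $m=1$ gives $x_a^\pm<2/\omega_\pm$, which together with the vector-field-direction lower bound at $v=\pm1$ yields the claim for all $n$ uniformly and for all $a\epsilon\ll1$. Your approach instead first exploits the exact $\sfrac43$- (resp.\ $4$-) periodicity of the $V_\pm$-flow and the parity structure of \cref{foldr} to reduce to $n=0$, and then passes to the $\epsilon\to0$ limit to inherit the bounds $P_+^a(\sfrac23)\in(\sfrac43,2)$ and $P_-^a(0)\in(2,4)$ already established inside the proof of \cref{lempnl}. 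This buys you a shorter argument and makes transparent why the regularized bounds are inherited from the discontinuous case, at the cost of only yielding ``for sufficiently small $\epsilon$'' rather than the explicit $a\epsilon\ll1$ regime the paper targets. You correctly flag the tangential (quadratic-fold) departure as the delicate point of the limiting step --- though note it is actually benign here, since $\ddot y>0$ at the regularized fold and the arrival is transversal, so the first return varies continuously in $\epsilon$ --- and your proposed quantitative alternative (explicit solution plus sandwiching by $\epsilon$-corrected versions of \cref{tha}--\cref{thb}) is essentially the paper's own strategy.
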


\begin{proof} 
At $v=1$, the vector field points towards $V_+$ on $\left(x_{\epsilon,2n+1}^+,x_{\epsilon,2n+2}^+\right)$ and towards $V_0$ on $\left(x_{\epsilon,2n+2}^+,x_{\epsilon,2n+3}^+\right)$, therefore $\bar{P}^a_{\epsilon,+}\left(x_{\epsilon,2n+1}^+\right) > x_{\epsilon,2n+2}^+$. An analogous argument at $v=-1$ implies $\bar{P}^a_{\epsilon,-}\left(x_{\epsilon,2n}^-\right) > x_{\epsilon,2n+1}^-$. 

Now take initial states $x_0^+=x_{\epsilon,2n+1}^+$ and $x_0^-=x_{\epsilon,2n}^-$. The solutions of \cref{nlr} with initial conditions $\left(x_0^\pm,\pm 1\right)$ are
\begin{align} 
v_\pm\left(x,x_0^\pm\right)=&\frac{\omega_\pm \pi \cos \left(\omega_\pm\pi x\right) - a \sin \left(\omega_\pm\pi x\right)}{\epsilon \left(\omega_\pm^2\pi^2+a^2\right)} \nonumber\\ &\pm e^{-a\left(x-x_0^\pm\right)}\left(1+\frac{\omega_\pm\pi\sqrt{1-a^2\epsilon^2}-a^2\epsilon}{\epsilon \left(\omega_\pm^2\pi^2+a^2\right)}\right).
\end{align}
Letting $x_a^+=x-x_0^+$, $x_a^-=x-x_0^-$, the conditions $v_\pm\left(x,x_0^\pm\right)=\pm 1$ result in 
\begin{align}
  v_\pm\left(x,x_0^\pm \right)-\left(\pm 1\right)=\frac{\pm 1}{\omega_\pm^2\pi^2+a^2}h_\pm\left(x_a^\pm\right),
\end{align}
with
\begin{align}  h_\pm\left(x_a^\pm\right):=&a\left(\epsilon\omega_\pm\pi+\sqrt{1-a^2\epsilon^2}\right) \sin \left(\omega_\pm\pi x_a^\pm\right) \nonumber\\ &-\left(\omega_\pm\pi\sqrt{1-a^2\epsilon^2}-a^2\epsilon\right) \cos \left(\omega_\pm\pi x_a^\pm\right) - \epsilon \left(\omega_\pm^2\pi^2+a^2\right) \nonumber\\ &+ e^{-ax_a^\pm}\left(\epsilon \omega_\pm^2\pi^2+\omega_\pm\pi\sqrt{1-a^2\epsilon^2}\right).
\end{align}
It is clear that, for all $a,\epsilon \in \mathbb{R}^+$, with $a\epsilon <<1$,
\begin{align}
 h_u^\pm\left(x_a^\pm\right) > h_\pm\left(x_a^\pm\right),
\end{align} 
where
\begin{align}  h_u^\pm\left(z\right)&:=a\left(\epsilon\omega_\pm\pi+\sqrt{1-a^2\epsilon^2}\right) \sin \left(\omega_\pm\pi z\right) \nonumber\\ &\qquad+\left(\omega_\pm\pi\sqrt{1-a^2\epsilon^2}-a^2\epsilon\right) \left[1-\cos \left(\omega_\pm\pi z\right)\right].
\end{align}			
As, by construction, $h_\pm\left(z\right)>0$ when $z \rightarrow 0^+$, its first zero after $z=0$ is upper bounded by that of $h_u^\pm\left(z\right)$. Then it is immediate that
\begin{align}
h_u^\pm\left(z\right)=0 \ \Longleftrightarrow \  z_\pm=\frac{2m}{\omega_\pm}, \ m \in \mathbb{Z}. 
\end{align}
For $m=1$ this gives
\begin{align}
 x_a^\pm=x^\pm-x_0^\pm < \frac{2}{\omega_\pm}
\end{align}
and, recalling \cref{foldr}, one obtains 
\begin{align} & V_+: \ x^+<x_{\epsilon,2n+1}^++\frac{4}{3}=x_{\epsilon,2n+3}^+, \\
 & V_-: \ x^-<x_{\epsilon,2n+1}^-+4=x_{\epsilon,2n+2}^-.
\end{align}
Hence, the result follows.
\end{proof}

Let us consider the stable critical manifold
\begin{equation} \label{ssmn} \Lambda^{N}_{0n}:=\left\{(x,v) \in V_0: \ \psi \left(v\right) = 2\left(\frac{2n}{x}-1\right), \ x \in \left(\frac{4n}{3},4n\right)\right\}, 
\end{equation}
and let  $\delta \in \mathbb{R}^+$ be such that
\begin{equation} \label{delta} 1 \gg \delta > x^+_{2n}-x_{\epsilon,2n}^+=\frac{2}{3\pi}\arcsin (a\epsilon).
\end{equation}
Notice that $\Lambda^{N}_{0n}$ is enveloped within the manifolds
\begin{align}
 \Lambda^{N}_{\pm\delta n}:=\left\{(x,v) \in \mathbb{R}^2: \ \psi \left(v\right) = \frac{4n\pm 3 \delta}{x}-2, \ x \in \left[\frac{4n}{3}\pm \delta,4n\pm 3\delta\right]\right\}.
\end{align}
(These are a $\delta$-perturbation of $\Lambda_0^N$ but are {\it not} invariant manifolds). 
Setting 
\begin{align}
 \bar{\delta}= \sin \frac{3\pi\delta}{2}>a\epsilon,
\end{align}
it is immediate that
\begin{align}
 \sin \left(\pi x \left[1+\sfrac{1}{2}\psi \left(v\right)\right]\right)=\left\{\begin{array}{cc} 0 & \forall \ (x,v) \in \Lambda^{N}_{0n} \\ \pm \bar{\delta} & \forall \ (x,v) \in \Lambda^{N}_{\pm \delta n}.   \end{array}\right. 
\end{align}
Now, let $\mathcal{K}_{\delta n}$ be the compact set defined as
\begin{align} \mathcal{K}_{\delta n}:=&\left\{(x,v) \in \mathbb{R}^2: \ x \in \left[\frac{4n}{3}+\delta,\frac{4n+2}{3}-\delta\right] \ \wedge \ \left|v\right| \leq 1 \  \right. \nonumber\\ & \left. \wedge \ \psi \left(v\right) \geq \frac{4n+ 3 \delta}{x}-2\right\}\;.
\end{align}
Notice that, for all $(x,v) \in \mathcal{K}_{\delta n}$ and $\epsilon \rightarrow 0$, \cref{nlrb} is such that
\begin{align}
 \dot v=-a v -\frac{1}{\epsilon}\sin \left(\pi x \left[1+\sfrac{1}{2}\psi \left(v\right)\right]\right) \leq a-\frac{1}{\epsilon}\bar{\delta} \xlongrightarrow[\epsilon \rightarrow 0]{}-\infty.
\end{align}
This means that the  regular form of the vector field \cref{nlr},  i.e. \cref{s1drf},  is pointing in the negative direction of the $v$ axis with infinite modulus, so $\mathcal{K}_{\delta n} \cap \Lambda^{N}_{\delta n}$ is reached in finite time. Then by regularity with respect to initial conditions and parameters, there exists $\epsilon_\delta=\epsilon_\delta \left(\delta\right) \in \mathbb{R}^+$ such that, for all $\epsilon  \in \left(0,\epsilon_\delta\right)$, any trajectory starting within the subcompact set $\mathcal{K}_{2\delta n} \subset\mathcal{K}_{\delta n}$, where 
\begin{align} \mathcal{K}_{2\delta n}:=&\left\{(x,v) \in \mathbb{R}^2: \ x \in \left[\frac{4n}{3}+2\delta,\frac{4n+2}{3}-2\delta\right] \ \wedge \ \left|v\right| \leq 1 \  \right. \nonumber\\ & \left. \wedge \ \psi \left(v\right) \geq \frac{4n+ 3 \delta}{x}-2\right\},
\end{align}
evolves in $\mathcal{K}_{\delta n}$ and leaves it through 
the (upper) border of the compact set
\begin{align} \nonumber \mathcal{V}_{2\delta n}:=&\left\{(x,v) \in \mathbb{R}^2: \ x \in \left[\frac{4n}{3}+2\delta,4n-6\delta\right] \ \wedge \ \left|v\right| \leq 1 \  \right. \\ \label{s2dn} & \left. \wedge \ \frac{4n - 3 \delta}{x}-2 \leq \psi \left(v\right) \leq \frac{4n+ 3 \delta}{x}-2\right\},
\end{align}
where it arrives in finite time.

It follows from \cref{psi} and the compactness of $\mathcal{V}_{2\delta n}$ that $\psi'(v)$ has a positive lower bound in  $\mathcal{V}_{2\delta n}$, namely, $L_{\psi'}=L_{\psi'}\left(\delta,n\right)$ such that $0<L_{\psi'}\leq \psi'(v)$. Hence, there exists $\epsilon_\pm \in \mathbb{R}^+$ such that, for all $(x,v) \in \mathcal{V}_{2\delta n} \cap \Lambda^{N}_{\delta n}$  and $\epsilon \in \left(0,\epsilon_+\right)$ the flow is pointing downwards, i.e.
\begin{align} \vec{\nabla}\Lambda^{N}_{\delta n} \cdot \left(\dot x, \dot y\right)=&1+\frac{\psi \left(v\right)}{2}-\frac{x\psi' \left(v\right)}{2\epsilon}
\left(a \epsilon v +\sin \left(\pi x \left[1+\sfrac{1}{2}\psi \left(v\right)\right]\right)\right) \leq \frac{3}{2}\nonumber\\ & -\frac{2n+3\delta}{3\epsilon}\psi' \left(v\right)\left(\bar{\delta}-a\epsilon \right)\leq \frac{3}{2}-\frac{2+3\delta}{3\epsilon}L_{\psi'}\left(\bar{\delta}-a\epsilon \right) \ < 0,
\end{align}
while a similar procedure implies that the flow is pointing upwards for all $(x,v) \in \mathcal{V}_{2\delta n} \cap \Lambda^{N}_{-\delta n}$  and $\epsilon \in \left(0,\bar{\epsilon}_-\right)$, with $\epsilon_-=\frac{\bar{\delta}}{a}$, hence
\begin{equation} \label{lam-del} \vec{\nabla}\Lambda^{N}_{-\delta n} \cdot \left(\dot x, \dot y\right)> 1+\frac{2+3\delta}{3\epsilon}L_{\psi'}\left(\bar{\delta}-a\epsilon \right)>0.
\end{equation}
Then for all $\epsilon \in \left(0,\min\{\epsilon_{\delta},\epsilon_+,\epsilon_-\}\right)$, any trajectory entering $\mathcal{V}_{2\delta n}$ evolves within this set and leaves it through the right border, namely $\mathcal{V}_{2\delta n} \cap \{(x,v) \in \mathbb{R}^2: \ x=4n-6\delta\}$. 

Recall now \cref{foldi} and \cref{foldr}. As regards trajectories entering $V_0$ from $V_+$ by $\left(x_{\epsilon,2n}^+,x^+_{2n}+2\delta\right)$, the vertical vector field component on $v=1$ is negative, i.e. $\dot v < 0$, and $\dot x=1$. Moreover, the vector field on $\Lambda^{N}_{-\delta n}$ is pointing upwards also for $x \in \left(x_{\epsilon,2n}^+,x^+_{2n}+2\delta\right)$ and $\epsilon \in \left(0,\epsilon_-\right)$, as \cref{lam-del} holds in this region as well.
Therefore, the trajectories within
\begin{align} \mathcal{\bar{K}}_{2\delta n}:=&\left\{(x,v) \in \mathbb{R}^2: \ x \in \left(x_{\epsilon,2n}^+,x^+_{2n}+2\delta\right) \ \wedge \ \left|v\right| \leq 1 \ \wedge \right.\nonumber \\ & \left. \wedge \ \psi \left(v\right) \geq \frac{4n- 3 \delta}{x}-2\right\},
\end{align}
are directed towards the left border of $\mathcal{K}_{2\delta n} \cup \mathcal{V}_{2\delta n}$, which is reached in finite time.

The above discussion implies the following Lemma.

\begin{lemma} \label{lemant} Let $a \in \mathbb{R}^+$ and $1\gg\delta>0$. Then there exist $\epsilon_\delta,\epsilon_+,\epsilon_- \in \mathbb{R}^+$ such that, $\forall n \geq 1$ and $\epsilon \in \left(0,\bar{\epsilon}_+\right)$, with $\bar{\epsilon}_+=\min\{\epsilon_\delta,\epsilon_+,\epsilon_-\}$, the trajectories of \cref{nlr} entering the switching layer $V_0$ from $V_+$ through $x \in \left(x_{\epsilon,2n}^+,x^+_{2n+1}-2\delta\right)$ reach $\mathcal{V}_{2\delta n}$ in finite time, and keep evolving therein until they leave it by $x \geq 4n-6\delta$. \qed
\end{lemma}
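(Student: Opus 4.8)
The statement synthesises the estimates built up in the paragraphs preceding it, so the plan is to organise them into a single trapping argument. Fix $a>0$ and $\delta$ as in \cref{delta}, put $\bar\epsilon_+=\min\{\epsilon_\delta,\epsilon_+,\epsilon_-\}$ with $\epsilon_-=\bar\delta/a$, and split the entry window $\left(x_{\epsilon,2n}^+,x^+_{2n+1}-2\delta\right)$ into the fold strip $\left(x_{\epsilon,2n}^+,\tfrac{4n}{3}+2\delta\right)$ (the $x$-range of $\bar{\mathcal K}_{2\delta n}$) and the bulk $\left[\tfrac{4n}{3}+2\delta,x^+_{2n+1}-2\delta\right)$ (the $x$-range of $\mathcal K_{2\delta n}$), handling the two in turn and reducing the first to the second.

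For an entry point in the bulk, the trajectory crosses $v=1$ with $\dot x=1$ and so enters $\mathcal K_{2\delta n}\subset\mathcal K_{\delta n}$ lying above the upper envelope $\Lambda^{N}_{\delta n}$. On $\mathcal K_{\delta n}$ the argument of the sine in \cref{nlrb} stays in $(2n\pi,2n\pi+\pi)$, giving $\sin\ge\bar\delta$ and hence $\dot v\le a-\bar\delta/\epsilon$; for $\epsilon<\epsilon_\delta$ this is strictly negative with modulus of order $1/\epsilon$, so a direct comparison shows $v$ pierces $\Lambda^{N}_{\delta n}$ — equivalently, the trajectory enters $\mathcal V_{2\delta n}$ — after an $x$-displacement that vanishes as $\epsilon\to0$ uniformly in $n$, in particular before $x$ leaves the window of $\mathcal K_{\delta n}$. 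For an entry point in the fold strip the trajectory sits in $\bar{\mathcal K}_{2\delta n}$, where $\dot v<0$ on $v=1$ and, by \cref{lam-del} (which holds here since $\epsilon<\epsilon_-$), the field on $\Lambda^{N}_{-\delta n}$ points upward; it is therefore confined between $v=1$ and $\Lambda^{N}_{-\delta n}$ while $\dot x=1$ carries it rightward to $x=\tfrac{4n}{3}+2\delta$, at which moment it has entered $\mathcal K_{2\delta n}\cup\mathcal V_{2\delta n}$ and the bulk argument takes over.

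It remains to check that, once inside $\mathcal V_{2\delta n}$, a trajectory can only leave through the right face $\{x=4n-6\delta\}$. The set $\mathcal V_{2\delta n}$ is bounded above by $\Lambda^{N}_{\delta n}$ and below by $\Lambda^{N}_{-\delta n}$; for $\epsilon<\min\{\epsilon_+,\epsilon_-\}$ the inequalities established above make the flow strictly inward on both (downward on the top, upward on the bottom), and the constraint $|v|\le1$ is automatically met inside this band, so the only non-characteristic boundary pieces are the vertical faces $\{x=\tfrac{4n}{3}+2\delta\}$ (inward, as $\dot x=1>0$) and $\{x=4n-6\delta\}$. Since $\dot x\equiv1$ and $x$ is bounded on $\mathcal V_{2\delta n}$, the trajectory meets $\{x=4n-6\delta\}$ after an $x$-lapse of at most $4n-6\delta-(\tfrac{4n}{3}+2\delta)=\tfrac{8n}{3}-8\delta$, which is the claim.

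The main obstacle is the uniformity in $n$ of the threshold $\epsilon_+$ governing the downward-flow condition on $\Lambda^{N}_{\delta n}$. The crude bound using $L_{\psi'}(\delta,n)=\inf_{\mathcal V_{2\delta n}}\psi'(v)$ does not by itself suffice, because the left end of $\mathcal V_{2\delta n}$ approaches the upper fold $v=1$ and its right end the lower fold $v=-1$ as $n\to\infty$, where $\psi'$ degenerates ($\psi'(\pm1)=0$, with $\psi''(\pm1)\ne0$ by \cref{psie}), so $L_{\psi'}(\delta,n)\to0$. The fix is to keep the factor $x$ in the normal-derivative estimate rather than bounding it below by $\tfrac{4n}{3}$ and dropping the $n$: on $\Lambda^{N}_{\delta n}$ one has $x=\Theta(n)$ while $\psi'(v)$ vanishes no faster than $n^{-1/2}$ near the folds, so the product $x\,\psi'(v)$ — which is what multiplies $1/\epsilon$ in $\vec\nabla\Lambda^{N}_{\delta n}\cdot(\dot x,\dot y)$ — is bounded below uniformly in $n$ (indeed grows), yielding a single $\epsilon_+$, hence a single $\bar\epsilon_+$, valid for all $n\ge1$. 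The remaining points — the finite-time comparison on $\mathcal K_{\delta n}$ and the confinement in $\bar{\mathcal K}_{2\delta n}$ right up to the fold $x_{\epsilon,2n}^+$ — are routine once the sign conditions are secured.
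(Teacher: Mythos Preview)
Your argument is correct and follows essentially the same trapping scheme as the paper's discussion preceding the lemma (which, with the \qed, constitutes the paper's proof): the decomposition into $\bar{\mathcal K}_{2\delta n}$, $\mathcal K_{2\delta n}$, $\mathcal V_{2\delta n}$, the vertical comparison $\dot v\le a-\bar\delta/\epsilon$ on $\mathcal K_{\delta n}$, and the inward-flow inequalities on $\Lambda^{N}_{\pm\delta n}$ are exactly the paper's ingredients, organised in the same order.

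Where you go further is on the uniformity of $\epsilon_+$ in $n$, and here you actually repair a weakness in the paper's write-up. The paper's displayed bound
\[
\vec\nabla\Lambda^{N}_{\delta n}\cdot(\dot x,\dot y)\ \le\ \tfrac32-\tfrac{2n+3\delta}{3\epsilon}\,\psi'(v)(\bar\delta-a\epsilon)\ \le\ \tfrac32-\tfrac{2+3\delta}{3\epsilon}\,L_{\psi'}(\bar\delta-a\epsilon)
\]
discards the $n$ in the prefactor while keeping the $n$-dependent $L_{\psi'}=L_{\psi'}(\delta,n)$; since the left end of $\mathcal V_{2\delta n}$ approaches the fold $v=1$ (and the right end $v=-1$) as $n\to\infty$, one has $L_{\psi'}(\delta,n)\to0$, so the final inequality does not by itself produce an $n$-independent threshold. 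Your observation that one should retain the product $x\,\psi'(v)$ --- with $x=\Theta(n)$ on $\Lambda^{N}_{\delta n}$ and, by \cref{psie}, $\psi'(v)\gtrsim\sqrt{1-|\psi(v)|}=\Theta(n^{-1/2})$ near the folds --- gives $x\,\psi'(v)$ bounded below by a positive constant uniformly in $n$ (indeed growing like $\sqrt n$ at the worst points), which is precisely what is needed for a single $\epsilon_+$ and hence a single $\bar\epsilon_+$ valid for all $n\ge1$.
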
  

\begin{remark} \label{remaining+} The trajectories entering the switching layer $V_0$ from $V_+$ through $x \in \left(\frac{4n+2}{3}-2\delta,x_{\epsilon,2n+1}^+\right)$ 
will be either attracted to $\mathcal{V}_{2\delta n}$ or expelled again to $V_+$. However, a trajectory moving into $V_+$ with initial condition on $\left(x,v\right)=\left(x_{\epsilon,2n+1}^+ +\alpha,1\right)$, $1\gg\alpha>0$, is upper bounded by the one with initial condition $\left(x,v\right)=\left(x_{\epsilon,2n+1}^+,1\right)$, 
and this one intersects again the line $v=1$ within the (attractive) $x$-interval $\left(x_{\epsilon,2n+2}^+,x_{\epsilon,2n+3}^+\right)$, as shown in \cref{below2}, so the trajectory starting on $\left(x,v\right)=\left(x_{\epsilon,2n+1}^+ +\alpha,1\right)$ will fall therein as well. If such entrance is by  $x \in \left(x_{\epsilon,2n+2}^+,x^+_{2n+3}-2\delta\right)$, then  \cref{lemant} applies and the trajectory is attracted by the corresponding $\mathcal{V}_{2\delta (n+1)}$. Otherwise, the process is iterated, but the 'loss' of energy due to the damping term $-av$ in \cref{nlrb} will eventually take the trajectory to enter $V_0$ by $x \in \left(x_{\epsilon,2(n+k)}^+,x^+_{2(n+k)+2}-2\delta\right)$, and then \cref{lemant} applies.
\end{remark}

An equivalent procedure implies the analogous result to \cref{lemant} for trajectories entering $V_0$ from $V_-$.

\begin{lemma} \label{lemant-} Let $a \in \mathbb{R}^+$ and $1\gg\delta>0$. Then, given $1\gg\hat{\delta}>6\delta$, there exists $\bar{\epsilon}_->0$ such that, $\forall n\geq 1$ and $\epsilon \in \left(0,\bar{\epsilon}_-\right)$, the trajectories of \cref{nlr} entering the switching layer $V_0$ from $V_-$ through $x \in \left(x^-_{\epsilon,2n-1},x^-_{2n}-\hat{\delta}\right)$ reach $\mathcal{V}_{2\delta n}$ in finite time, and keep evolving therein until they leave it by  $x \geq x^-_{2n}-6\delta$.
\qed
\end{lemma}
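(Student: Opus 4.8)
The plan is to mirror, with the obvious sign changes, the construction preceding \cref{lemant}, building the $V_-$-analogues of the compact sets $\mathcal{K}_{\delta n},\mathcal{K}_{2\delta n},\mathcal{V}_{2\delta n},\bar{\mathcal{K}}_{2\delta n}$ around the attracting critical manifold $\Lambda^N_{0n}$ of \cref{ssmn}. By \cref{foldint-} a trajectory coming from $V_-$ enters $V_0$ through $v=-1$ on the window $\left(x^-_{\epsilon,2n-1},x^-_{\epsilon,2n}\right)$, whose left endpoint is $x^-_{2n-1}+O(\epsilon)=4n-2+O(\epsilon)$ and whose right endpoint sits at the lower fold $x^-_{2n}=4n$ of $\Lambda^N_{0n}$. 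Restricting the entry to $x<x^-_{2n}-\hat\delta=4n-\hat\delta$ and using $\hat\delta>6\delta$ together with $4n-2\ge\tfrac{4n}{3}+2\delta$ (valid for every $n\ge1$ once $\delta$ is small), this window lies inside the safe $x$-range $\left[\tfrac{4n}{3}+2\delta,4n-6\delta\right]$ of $\mathcal{V}_{2\delta n}$, with a fixed margin $\hat\delta-6\delta>0$ to spare at the right end.

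First I would show the trajectory rises into $\mathcal{V}_{2\delta n}$ in finite time. On the region lying between $v=-1$ and the lower envelope $\Lambda^N_{-\delta n}$ the argument $\pi x\left(1+\tfrac12\psi(v)\right)$ stays in $\left((2n-1)\pi,2n\pi\right)$, so $\sin\left(\pi x\left[1+\tfrac12\psi(v)\right]\right)\le 0$ and, away from an $O(\epsilon)$-neighbourhood of the entry fold $x^-_{\epsilon,2n-1}$, in fact $\le-\bar\delta$; there $\dot v\ge(\bar\delta-a\epsilon)/\epsilon\to+\infty$, so $\mathcal{V}_{2\delta n}$ is reached through its lower border after an $x$-increment $O(\epsilon)$. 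For trajectories entering close to $x^-_{\epsilon,2n-1}$, where the vertical field is $O(\epsilon)$ and nearly tangent to $v=-1$, I would instead apply the $V_-$-analogue of the $\bar{\mathcal{K}}_{2\delta n}$-argument, equivalently a fold normal-form estimate of Riccati type $\dot v\gtrsim a+\tfrac{c}{\epsilon}(v+1)^2$, to show that the trajectory escapes this corner into the strong-rise region after an $x$-increment $O(\sqrt\epsilon)$, hence well below the margin $\hat\delta-6\delta$ once $\epsilon$ is small. In contrast with the $V_+$ case no analogue of \cref{remaining+} is needed: $\Lambda^N_{0n}$ is the lowest of the critical manifolds coexisting over this $x$-interval, so the entering trajectory never lies below a repelling branch and cannot be expelled back into $V_-$ before capture.

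Once the trajectory lies in $\mathcal{V}_{2\delta n}$, the trapping estimates already obtained uniformly in $n$ in the discussion preceding \cref{lemant} apply verbatim: the flow points downward on the upper envelope $\Lambda^N_{\delta n}$ and upward on $\Lambda^N_{-\delta n}$ (the inequality \cref{lam-del} and the one before it, below the thresholds $\epsilon_+$ and $\epsilon_-$ there), so $v$ remains in the strip; since $\dot x=1$, the trajectory must leave $\mathcal{V}_{2\delta n}$ through its right border $x=4n-6\delta=x^-_{2n}-6\delta$. Taking $\bar\epsilon_-$ to be the minimum of $\epsilon_\delta,\epsilon_+,\epsilon_-$ from that construction (all $n$-independent) intersected with the threshold coming from the fold estimate above yields the claimed $\bar\epsilon_->0$, valid for every $n\ge1$. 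The main obstacle is the fold passage near $x^-_{\epsilon,2n-1}$: there the $\epsilon=0$ critical-manifold picture degenerates — the repelling index-$(2n-1)$ branch has been displaced $O(\epsilon)$ into $V_-$ — so normal hyperbolicity cannot be quoted and the nearly-tangent passage must be controlled quantitatively, with the extra bookkeeping that the $x$-drift it induces be dominated by $\hat\delta-6\delta$; making this uniform in $n$ is the only delicate point, and it is inherited from the $n$-uniform construction behind \cref{lemant}.
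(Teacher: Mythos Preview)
Your proposal is correct and is precisely the ``equivalent procedure'' the paper invokes in its one-line proof; you are simply filling in the details the paper omits. The Riccati estimate near the left fold $x^-_{\epsilon,2n-1}$ is more than is required --- the simpler envelope-trapping argument used for $\bar{\mathcal{K}}_{2\delta n}$ in the $V_+$ case (flow on $v=-1$ points into $V_0$, flow on $\Lambda^N_{\delta n}$ points down, and $\dot x=1$ carries the trajectory rightward into $\mathcal{V}_{2\delta n}$) already suffices here.
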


Let us now compute the distance between the stable critical manifold $\Lambda_{0n}^{N}$ defined in \cref{ssmn} and its associated slow invariant manifold within $\mathcal{V}_{2\delta n}$, which is defined in \cref{s2dn}. The (unique) formal expansion of this slow manifold in any subcompact of $\mathcal{V}_{2\delta n}$ sufficiently far from the fold points $\left(\frac{4n}{3},1\right)$ and $\left(4n,-1\right)$, where $\psi'(v) \neq 0$, is given by
\begin{equation} \label{fm} v(x,\epsilon)=v_0(x)+\epsilon v_1(x)+O\left(\epsilon^2\right),
\end{equation}
with $v_0(x)$ being such that $\left(x,v_0(x)\right)$ stands for the graph of $\Lambda_{0n}^{N}$, i.e.
\begin{align}
 x\left(1+\frac{\varphi(v_0(x))}{2}\right)=2n.
\end{align}
Substituting \cref{fm} into \cref{nlrb} while considering a Taylor expansion for the sinusoidal term in a neighbourhood of $v_0(x)$ and disregarding $O\left(\epsilon^2\right)$ terms yields
\begin{align}
 \dot v_0(x)+\epsilon \dot v_1(x)=-a\left(v_0(x)+\epsilon v_1(x)\right)-\frac{\pi x \varphi'\left(v\right)}{2}.
\end{align}
Then for $\epsilon \rightarrow 0$,
\begin{align}
v_1(x)=2\frac{\dot v_0(x)+av_0(x)}{\pi x \psi'\left(v\right)}.
\end{align}
Let us now consider a compact subset of $\left[\frac{4n}{3}+2\delta,4n-6\delta\right]$ where $v_0(x)$ has a 'flat' shape,  i.e. far enough from $v=\pm 1$, say $\left[\frac{5n}{3},\frac{10n}{3}\right]$. It is immediate that $v_0(x)$, $\dot{v}_0(x)$, and $\psi'(v_0(x))$ are bounded therein, so it follows straightforwardly that
\begin{align}
 v_1(x) \sim O\left(\frac{1}{x}\right) \sim O\left(\frac{1}{n}\right), \ \ \forall x \in \left[\frac{5n}{3},\frac{10n}{3}\right].
\end{align}
Consequently, \cref{fm} implies
\begin{align}
 v(x)-v_0(x) \sim O\left(\frac{\epsilon}{n}\right), \ \ \forall x \in \left[\frac{5n}{3},\frac{10n}{3}\right].
\end{align} 
Then Fenichel theory \cite{f79} and the preceding discussion yield the following Lemma.

\begin{lemma} \label{lfm} For all $n \geq 1$ and $1\gg\delta>0$, the trajectories of \cref{nlr} evolving in the compact set $\mathcal{V}_{2\delta n}$, defined in \cref{s2dn}, are exponentially attracted to a slow invariant manifold that, for all $x \in \left[\frac{7n}{3},3n+2\right]$, is within  $O\left(\frac{\epsilon}{n}\right)$ distance from the stable critical manifold $\Lambda_{0n}^{N}$ defined in \cref{ssmn}. \qed
\end{lemma}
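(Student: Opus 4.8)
The plan is to deduce the statement from geometric singular perturbation theory (Fenichel's theorem \cite{f79}) applied to the slow--fast form \cref{s1dr}--\cref{s1drf} of the nonlinear system, combined with the explicit asymptotics of the slow manifold computed just above. First I would fix $n\geq 1$ and a small $\delta>0$ and restrict attention to the portion of the critical manifold $\Lambda_{0n}^{N}$ lying over a closed $x$-interval $J_n$ that contains $\left[\frac{7n}{3},3n+2\right]$ in its interior and is itself contained in $\left(\frac{4n}{3},4n\right)$, hence bounded away from the fold points $\left(\frac{4n}{3},1\right)$ and $\left(4n,-1\right)$. On such a set \cref{hynl} gives $\frac{\partial}{\partial v}f_N(x,\psi(v))=-\frac{\pi x}{2}\psi'(v)$, which is strictly negative because $\psi'>0$ away from $v=\pm 1$ by \cref{psib}, so $\Lambda_{0n}^{N}$ is normally hyperbolic and attracting over $J_n$. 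Fenichel's theorem then yields, for $\epsilon$ sufficiently small, a locally invariant slow manifold $\Lambda^{N}_{\epsilon,n}$ that is the $C^1$ graph $v=h(x,\epsilon)$ of a function $O(\epsilon)$-close in $C^1$ to the graph $v=v_0(x)$ of $\Lambda_{0n}^{N}$, together with an exponentially attracting stable fibration. Since the analysis leading to \cref{lemant} and \cref{lemant-} shows that a trajectory of \cref{nlr} which evolves in $\mathcal{V}_{2\delta n}$ remains in the basin of $\Lambda^{N}_{\epsilon,n}$, the fibration gives convergence to it at a rate $e^{-\kappa(x-x_0)/\epsilon}$ for some $\kappa>0$ (in fact one may take $\kappa$ of order $n$), which is the claimed exponential attraction.

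It then remains to sharpen the $O(\epsilon)$ proximity of $\Lambda^{N}_{\epsilon,n}$ to $\Lambda_{0n}^{N}$ to $O(\epsilon/n)$ on $\left[\frac{7n}{3},3n+2\right]$. For this I would use the invariance equation for the graph, $\epsilon h_x(x,\epsilon)=-a\epsilon h-f_N(x,\psi(h))$; subtracting the identity $f_N(x,\psi(v_0))=0$ and applying the mean value theorem in $v$ gives
\[
h(x,\epsilon)-v_0(x)=\frac{-\epsilon\bigl(a\,h(x,\epsilon)+h_x(x,\epsilon)\bigr)}{\frac{\partial}{\partial v}f_N\bigl(x,\psi(\xi)\bigr)}\,,\qquad \xi\ \text{between}\ h\ \text{and}\ v_0\,.
\]
Here $\bigl|\frac{\partial}{\partial v}f_N(x,\psi(\xi))\bigr|=\frac{\pi x}{2}\psi'(\xi)\geq \frac{\pi x}{2}L_{\psi'}$, which is of order $n$ since $x\geq \frac{7n}{3}$ and, as discussed below, $\psi(\xi)$ stays in a fixed compact subinterval of $(-1,1)$ over this range, so $\psi'(\xi)$ admits a lower bound $L_{\psi'}>0$ independent of $n$; meanwhile $h$ is bounded and $h_x=v_0'(x)+O(\epsilon)$ is bounded, so the numerator is $O(\epsilon)$. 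This gives $h(x,\epsilon)-v_0(x)=O(\epsilon/n)$, and by the Fenichel fibration this is also the asymptotic distance of a trajectory trapped in $\mathcal{V}_{2\delta n}$ from $\Lambda_{0n}^{N}$. Equivalently, the same conclusion follows from the formal expansion $v=v_0+\epsilon v_1+O(\epsilon^2)$ computed above, since $v_1(x)=\frac{2(\dot v_0+av_0)}{\pi x\,\psi'(v_0)}=O(1/x)=O(1/n)$ on this range.

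The step I expect to be the main obstacle is making these estimates \emph{uniform in $n$}. One must verify that over $x\in\left[\frac{7n}{3},3n+2\right]$ the graph $v_0(x)$, defined by $\psi(v_0(x))=2\left(\frac{2n}{x}-1\right)$, together with $\dot v_0(x)$ and $\psi'(v_0(x))$, are bounded by constants independent of $n$, and that this interval lies interior to $\left(\frac{4n}{3},4n\right)$ with an $n$-independent margin. This reduces to the elementary observation that $\frac{2n}{x}\in\left[\frac{2n}{3n+2},\frac{6}{7}\right]$, which tends to $\left[\frac{2}{3},\frac{6}{7}\right]$ as $n\to\infty$, so $\psi(v_0)$ lies in a compact subset of $(-1,0)$ bounded away from $\pm 1$ uniformly for large $n$ (and $\dot v_0=O(1/n)$ there because $\frac{d}{dx}\frac{2n}{x}=-\frac{2n}{x^2}=O(1/n)$); for the finitely many small $n$ for which $\left[\frac{7n}{3},3n+2\right]$ protrudes past $4n-6\delta$ one simply restricts to its intersection with $\left(\frac{4n}{3},4n-6\delta\right)$, since the estimate is claimed only up to a constant. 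With the geometry so pinned down, the two bounds above are routine, and Fenichel theory supplies the rest.
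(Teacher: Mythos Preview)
Your proposal is correct and follows essentially the same route as the paper: invoke Fenichel's theorem for the existence of an attracting slow invariant manifold near the normally hyperbolic branch $\Lambda_{0n}^{N}$, and then read off the $O(\epsilon/n)$ distance from the first-order term $v_1(x)=\frac{2(\dot v_0+av_0)}{\pi x\,\psi'(v_0)}$ of the slow-manifold expansion (your invariance-equation/mean-value argument is just a slightly more explicit rewriting of that same computation). The paper's own argument is precisely the discussion immediately preceding the lemma, which it then closes with a bare \qed; your version adds welcome detail, in particular the check of uniformity in $n$ (that $\psi(v_0)$ stays in a fixed compact subinterval of $(-1,1)$ over $\left[\frac{7n}{3},3n+2\right]$), which the paper leaves implicit. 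One small slip: in your mean-value step you write $\bigl|\partial_v f_N(x,\psi(\xi))\bigr|=\frac{\pi x}{2}\psi'(\xi)$, but by \cref{hynl} there is also a factor $\bigl|\cos\!\bigl(\pi x[1+\tfrac12\psi(\xi)]\bigr)\bigr|$, which equals $1$ only on the critical manifold itself; since $\xi$ is $O(\epsilon)$-close to $v_0$ this factor is $1+O(\epsilon n)$ and the bound survives, but it is worth noting.
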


The $O\left(\frac{\epsilon}{n}\right)$-closeness between the stable critical manifold and the slow invariant manifold is lost in a neighbourhood of the fold points, where $\psi'(v)$ approaches zero  and the graph of $v_0(x)$ loses the 'flat' shape. The next result studies the distance between the fold points, i.e. the ideal `exiting points' of the switching layer, and the real exiting points, by turning \cref{nlr} into a Riccati equation. 

\begin{lemma} \label{below1} The trajectories of \cref{nlr} trapped by an attractive slow invariant manifold leave the switching layer by $\left(x_e,-1\right)$, with $x_e \in \left(x^-_{\epsilon,2n},\bar{x}^-_{\epsilon,2n}\right)$, where
\begin{equation} \label{jaquasi} \bar{x}^-_{\epsilon,2n}:=x^-_{\epsilon,2n}+O\left(\left(\epsilon^2/n\right)^\frac{1}{3}\right), \ \ n \in \mathbb{N}. 
\end{equation} 
\end{lemma}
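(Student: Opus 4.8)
The plan is to perform a local blow-up analysis at the lower fold point $\left(x^-_{\epsilon,2n},-1\right)$, where normal hyperbolicity of the attracting branch $\Lambda^{N}_{0n}$ (see \cref{ssmn}) degenerates because $\psi'(v)\to 0$ as $v\to-1$, and to reduce \cref{nlrb} there to a Riccati equation whose intrinsic scale turns out to be $\left(\epsilon^{2}/n\right)^{1/3}$. The lower bound $x_{e}>x^-_{\epsilon,2n}$ is the easy half: on $v=-1$, \cref{nlrb} reads $\epsilon\dot v=a\epsilon-\sin(\pi x/2)$, which by the definition \cref{foldr} of $x^-_{\epsilon,2n}$ is positive for $x$ just below $x^-_{\epsilon,2n}$, so the flow on the lower boundary of $V_0$ points inward there and no trapped orbit can leave the layer before the fold. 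For the upper bound I would set $\xi=x-x^-_{\epsilon,2n}$, $w=v+1$, so that the trapped orbit lies in $w>0$ and exits exactly at $w=0$. Using $\psi(-1)=-1$, $\psi'(-1)=0$ and $\psi''(-1)>0$ from \cref{psi} --- the last of which makes the fold generic --- one has $\psi(v)=-1+c\,w^{2}+O(w^{3})$ with $c=\tfrac12\psi''(-1)>0$; combining this with $x^-_{\epsilon,2n}=4n+O(\epsilon)$ from \cref{foldr} and Taylor-expanding the sine, \cref{nlrb} becomes, after using $\dot x=1$ to take $\xi$ as the independent variable,
\begin{align}
\epsilon\,\frac{dw}{d\xi}=-\tfrac{\pi}{2}\,\xi-\beta_{n}\,w^{2}+R(\xi,w,\epsilon),\qquad \beta_{n}\sim 2\pi c\,n,
\end{align}
the factor $n$ entering through $x\approx 4n$ in front of the quadratic term, and $R$ collecting the linear $-a\epsilon v$ contribution together with the cubic remainders. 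This is a Riccati equation in $w$.

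Next I would rescale: demanding that the three leading terms balance forces $\xi=\left(\epsilon^{2}/n\right)^{1/3}\Xi$ and $w=\left(\epsilon/n^{2}\right)^{1/3}W$ (up to fixed constants), which brings the equation to the parameter-free Riccati form
\begin{align}
\frac{dW}{d\Xi}=-\tilde\alpha\,\Xi-\tilde\beta\,W^{2}+O\!\left(\left(\epsilon^{2}/n\right)^{1/3}\right),\qquad \tilde\alpha,\tilde\beta>0 ,
\end{align}
and a standard substitution $W\propto u'/u$ linearises it to an Airy-type equation $u''=-K\,\Xi\,u$, $K>0$. On the incoming side, the hypothesis that the orbit is trapped by the attracting slow invariant manifold, together with \cref{lfm} (extended towards the fold by Fenichel's theorem), places the orbit within $O(\epsilon/n)$ of $\Lambda^{N}_{0n}$ on a section $\xi=-\rho$ with $\left(\epsilon^{2}/n\right)^{1/3}\ll\rho\ll 1$; dividing by the $w$-scale, this says that in the rescaled variables the orbit is $o(1)$-close to the attracting parabola $W\simeq\sqrt{-\tilde\alpha\,\Xi/\tilde\beta}$ at the far-negative value $\Xi=-\rho\left(\epsilon^{2}/n\right)^{-1/3}$.

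It then remains to read off the exit. The Airy-type equation has a unique solution $W_{\ast}$ bounded as $\Xi\to-\infty$ --- the one built from the decaying Airy function --- asymptotic to $+\sqrt{-\tilde\alpha\,\Xi/\tilde\beta}$, and nearby solutions converge to it as $\Xi$ increases because the Riccati right-hand side is strictly decreasing in $W$ on $W\ge 0$; hence the rescaled orbit converges to $W_{\ast}$. Since $W_{\ast}$ is positive up to $\Xi=0$ and $dW/d\Xi<0$ throughout $\Xi>0$, it decreases monotonically and vanishes at a fixed $\Xi_{e}^{\ast}>0$ --- the first critical point of the associated Airy solution past the origin --- which is of order one, and the neglected $O\!\left(\left(\epsilon^{2}/n\right)^{1/3}\right)$ terms move this crossing by at most $O\!\left(\left(\epsilon^{2}/n\right)^{1/3}\right)$. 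Undoing the scalings, $w=0$ is attained at $\xi_{e}=\left(\epsilon^{2}/n\right)^{1/3}\Xi_{e}$ with $\Xi_{e}=\Xi_{e}^{\ast}+o(1)>0$, so the orbit leaves $V_0$ at $\left(x_{e},-1\right)$ with $x_{e}=x^-_{\epsilon,2n}+\xi_{e}\in\left(x^-_{\epsilon,2n},\,x^-_{\epsilon,2n}+O\!\left(\left(\epsilon^{2}/n\right)^{1/3}\right)\right)$, i.e.\ \cref{jaquasi}.

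The main obstacle is making the matching rigorous uniformly in $n$: one must check that the Fenichel-type estimate behind \cref{lfm} (via \cite{f79}) still delivers $O(\epsilon/n)$ closeness on the intermediate section $\xi=-\rho$, where the hyperbolicity gap is only of order $\sqrt{n\rho}$; that this error indeed becomes $o(1)$ after the blow-up, so that the incoming data lands in the basin of $W_{\ast}$; and that the cubic-in-$w$, cross, and $a\epsilon v$ terms absorbed in $R$ are genuinely of lower order in the $\left(\epsilon^{2}/n\right)^{1/3}$ scaling. Everything else is routine geometric-singular-perturbation fold analysis; the $n$-bookkeeping is the delicate part.
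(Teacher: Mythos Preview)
Your proposal is correct and follows essentially the same route as the paper: a local blow-up at the lower fold $(x^-_{\epsilon,2n},-1)$, reduction of \cref{nlrb} to a Riccati equation using $\psi'(-1)=0$, $\psi''(-1)>0$, and the explicit rescaling that produces the $(\epsilon^2/n)^{1/3}$ horizontal scale, together with the easy lower bound from the sign of $\dot v$ on $v=-1$. The paper carries this out by writing down the change of variables directly (with a cube-root constant $\alpha$ absorbing both the $n\psi''(-1)$ and the $\arcsin(a\epsilon)$ contributions) and then citing the classical Riccati result of Mishchenko--Rozov for the unique decreasing solution and its finite escape at a bounded $\tilde x$; you instead push one step further to the Airy linearisation and argue the matching explicitly, and you are more candid about the uniformity-in-$n$ bookkeeping, which the paper does not spell out either.
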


\begin{proof} Using the change of variables 
\begin{align}
 x-x^-_{\epsilon,2n}=\frac{\epsilon^\frac{2}{3}}{\alpha}\tilde{x}, \quad v+1=\frac{\pi \epsilon^\frac{1}{3}}{2 \alpha^2}\tilde{v}, \quad t=\frac{\epsilon^\frac{2}{3}}{\alpha}\tau\;,
\end{align}
with
\begin{align}
 \alpha=\sqrt[3]{\frac{\pi}{2}\arcsin (a\epsilon) + \frac{n\pi^2\psi''\left(-1\right)}{2}}\;,
\end{align} 
which is real because of \cref{psie}, system \cref{nlr} can be written as
\begin{subequations} \label{ric}
\begin{align}
\label{rica} \dot{\tilde{x}}&=1,
\\
\label{ricb} \dot{\tilde{v}}&=-\left(\tilde{x}+{\tilde{v}}^2\right)+O\left(\frac{\epsilon^{2/3}}{n^{1/3}}\tilde{v},\frac{\epsilon^{2/3}}{n^{1/3}}\tilde{x}^2,\frac{\epsilon^{2/3}}{n^{1/3}}\tilde{x}\tilde{v},\epsilon^2\tilde{x},\epsilon^2\tilde{v}\right).
\end{align}
\end{subequations}
Neglecting the perturbative terms, \cref{ric} becomes a Riccati equation, which it is well known (see \cite{rozov80}) to have a unique, decreasing solution
such that
\begin{align} 
\tilde{x}&=-{\tilde{v}}^2+O\left({1}/{\tilde{v}}\right) \ \ \mbox{as} \ \ \tilde{v} \rightarrow +\infty, \\
\tilde{x}&=\tilde{\Omega}_0 \ \ \mbox{as} \ \ \tilde{v} \rightarrow -\infty,
\end{align}
with $\tilde{\Omega}_0 \in \mathbb{R}^+$. This solution guides the deviation of the  Fenichel manifold of the overall system \cref{ric} from the corresponding critical manifold, which is therefore bounded by some $\tilde{\Omega}$ such that $\tilde{\Omega}_0  < \tilde{\Omega}$. Hence, in the original variables we have that
\begin{align}
 x-x^-_{\epsilon,2n} < \sfrac{\epsilon^{{2}/{3}}}{\alpha}\tilde{\Omega} 
\quad\Rightarrow \quad x_e=x(-1)<x^-_{\epsilon,2n}+O\left(\left(\epsilon^2/n\right)^\frac{1}{3}\right)\;.
\end{align}

Notice also that $x_e>x^-_{\epsilon,2n}$, which is consistent with the fact that for all $x \in \left(x^-_{\epsilon,2n-1},x^-_{\epsilon,2n}\right)$ the vector field on $v=-1$ points upwards (i.e. towards $V_0$).

\end{proof}

\begin{remark} \label{ufp}
Although it has not been necessary in the proof, an equivalent procedure would imply that the deviation of the slow invariant manifold from $x^+_{\epsilon,2n}$ on the upper boundary of $V_0$ is also $O\left(\left(\epsilon^2/n\right)^\frac{1}{3}\right)$. \end{remark}

Finally, an analogous situation to that discussed in Remark \ref{remaining+} arises with the trajectories entering $V_-$ which are not guaranteed to reach $\mathcal{V}_{2\delta n}$, i.e., recalling \cref{lemant-}, those entering through  $x \in \left[x^-_{2n}-\bar{\delta},x^-_{\epsilon2n}\right)$. An equivalent reasoning implies that they will eventually catch up with a stable slow invariant manifold further on, and in any case never reach $V_+$. 

\cref{fig:trajectories} illustrates the above analysis. Trajectories are in red, while vector field directions are in blue.

\begin{figure}[t]\centering
\includegraphics[width=0.98\textwidth]{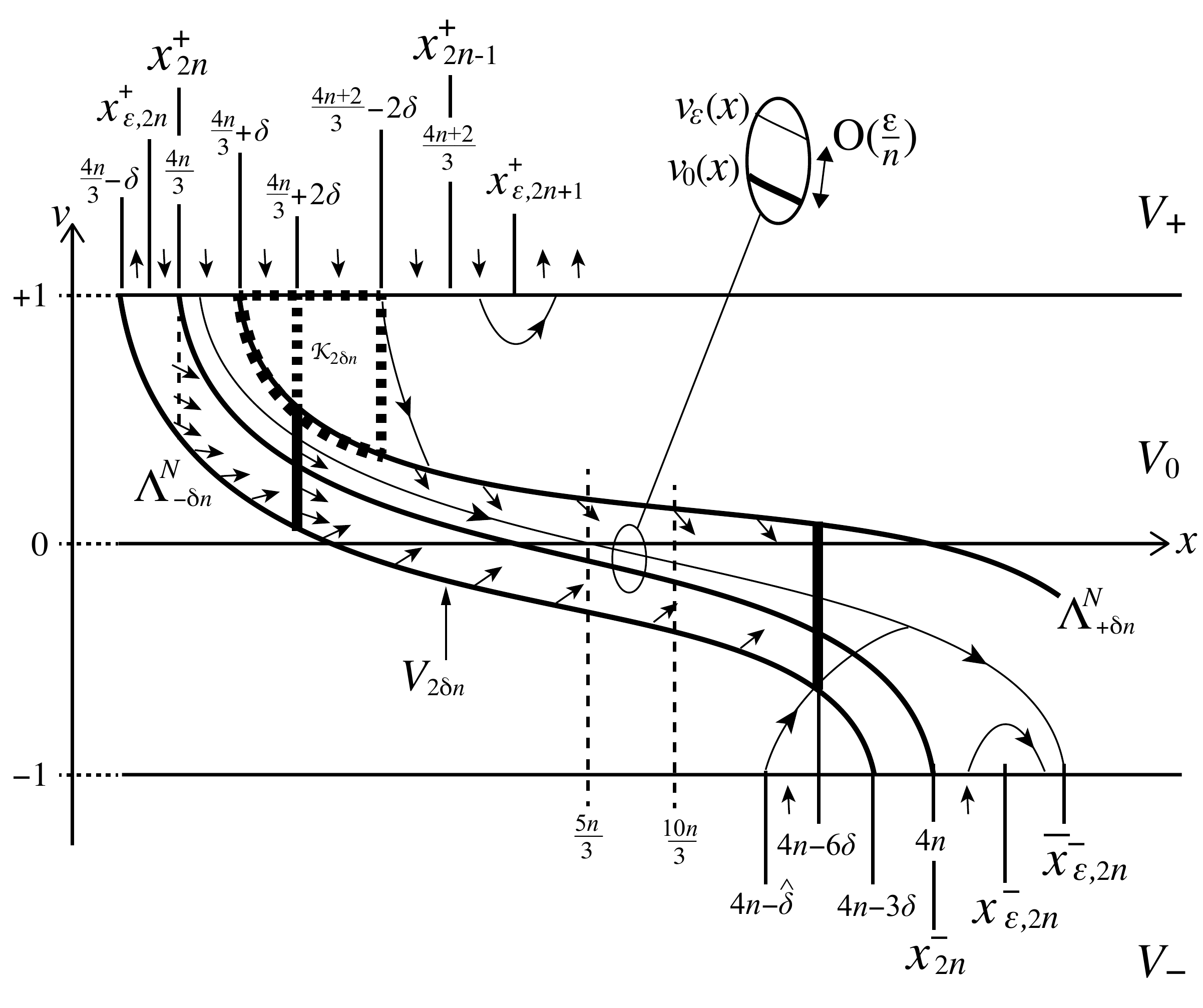} 
\caption{Nonlinear regularization: dynamical analysis as described in the text. (Not to scale)}\label{fig:trajectories}
\end{figure} 

Recall from \cref{nls} that in the nonlinear switching system, solutions with initial conditions in $S_+$ eventually find themselves constrained to $S_-\cup S_0$. It follows straightforwardly from the discussion above that this follows similarly for the regularized system.

\begin{proposition} \label{totavall} For any solution of \cref{nlr} there exists $x_T \in \mathbb{R}_{\geq 0}$ such that, for all $x \geq x_T$, its evolution is constrained in $V_0 \cup V_-$. \qed
\end{proposition}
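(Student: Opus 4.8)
The plan is to read this as the regularized counterpart of \cref{nls}: I track an arbitrary trajectory of \cref{nlr} through the regions $V_+$, $V_0$, $V_-$ (from \cref{vregs}) and show that once it has entered the switching layer $V_0$ it can never afterwards return to $V_+$; the conclusion then holds with $x_T$ the supremum of the (finite, bounded) set of $x$ at which $v(x)\ge1$. Throughout I take $\epsilon$ small enough, and $\delta$ (and $\hat\delta$) fixed accordingly, for \cref{lemant}, \cref{lemant-}, \cref{lfm} and \cref{below1} to be in force; since the $\epsilon$-thresholds in those lemmas are uniform in $n$, this is a single smallness condition. If the given solution lies in $V_0\cup V_-$ for all $x$, there is nothing to prove, so assume it has a piece in $V_+$.

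\emph{Step 1 (capture from $V_+$).} While the orbit is in $V_+$ one has $\psi(v)\equiv1$, so \cref{nlrb} reduces to the linear equation $\epsilon\dot v=-a\epsilon v-\sin(\pi\omega_+x)$ with bounded forcing; since $a>0$, $v$ stays bounded and returns to the level $v=1$ after a finite $x$-increment. By the fold geometry \cref{foldint}, the orbit can only cross $v=1$ downward into $V_0$ on an interval $\bb{x_{\epsilon,2n}^+,x_{\epsilon,2n+1}^+}$. If the crossing falls on the interior part $\bb{x_{\epsilon,2n}^+,x_{2n+1}^+-2\delta}$, then \cref{lemant} applies and the orbit is drawn into the absorbing compact set $\mathcal{V}_{2\delta n}$ of \cref{s2dn}. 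If it falls on the near-fold part, Remark~\ref{remaining+} applies: the orbit is either still captured by $\mathcal{V}_{2\delta n}$, or expelled back into $V_+$ and re-enters $V_0$ on the attractive interval $\bb{x_{\epsilon,2n+2}^+,x_{\epsilon,2n+3}^+}$ with strictly less energy; Remark~\ref{remaining+} then guarantees that after finitely many such bounces the re-entry lands on an interior part and \cref{lemant} takes over. Hence, after a finite $x$-increment, the orbit is absorbed by some $\mathcal{V}_{2\delta n}$.

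\emph{Step 2 (the $V_0\leftrightarrow V_-$ cycle).} Once inside $\mathcal{V}_{2\delta n}$, \cref{lfm} shows the orbit is exponentially attracted to a slow invariant manifold lying $O(\epsilon/n)$-close to the stable critical manifold $\Lambda^{N}_{0n}$ of \cref{ssmn}; trapped between $\Lambda^{N}_{-\delta n}$ (field pointing up) and $\Lambda^{N}_{\delta n}$ (field pointing down), it rides this slow manifold towards the lower fold $\bb{4n,-1}$, and by \cref{below1} leaves $V_0$ at $\bb{x_e,-1}$ with $x_e\in\bb{x_{\epsilon,2n}^-,\bar x_{\epsilon,2n}^-}$, entering $V_-$. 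In $V_-$ one has $\psi(v)\equiv-1$, \cref{nlrb} is again linear with bounded forcing, and the orbit returns to $v=-1$ after a finite $x$-increment via the map $\bar P^a_{\epsilon,-}$; \cref{below2} places this next contact in $\bb{x_{\epsilon,2n+1}^-,x_{\epsilon,2n+2}^-}$, which is precisely an interval of type \cref{foldint} along which $V_0$ is entered from $V_-$, with index $n+1$. Then \cref{lemant-}, together with the discussion immediately preceding the present proposition (which handles re-entries close to the upper fold exactly as Remark~\ref{remaining+} does on the $V_+$ side), shows the orbit is again absorbed by a stable slow invariant manifold, and in particular does not cross to $V_+$. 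Iterating in $n$, the orbit alternates $V_0\leftrightarrow V_-$ for all larger $x$. Finally, a solution that starts in $V_0$, or in $V_-$, enters this cycle directly: by \cref{lemant}, \cref{lemant-} and Remark~\ref{remaining+} it is either absorbed at once or expelled once into $V_+$, after which Step 1 applies. This proves the claim.

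\emph{Main obstacle.} Almost all of the analysis is already packaged in \cref{below2}, \cref{lemant}, \cref{lemant-}, \cref{lfm} and \cref{below1}; the proof is the (somewhat fiddly) bookkeeping of gluing their conclusions along an unbounded family of fold indices $n$, checking that every entry regime --- interior sub-interval, near-upper-fold, near-lower-fold --- is accounted for, and that the $\epsilon$- and $\delta$-thresholds can be chosen once and for all, uniformly in $n$ (which the cited lemmas supply, being phrased ``$\forall n\ge1$''). The only genuinely substantive point is the termination of the bounce-back loop of Step 1: this rests on the strict dissipativity $a>0$, which forces the amplitude with which successive excursions re-meet $v=1$ to decrease by a uniform amount, so that re-entry must eventually occur on the interior sub-interval where \cref{lemant} bites --- precisely the content of Remark~\ref{remaining+}, which I would simply invoke rather than re-prove.
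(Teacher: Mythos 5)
Your proof is correct and follows essentially the same route as the paper: the paper offers no separate proof of \cref{totavall}, marking it \qed as an immediate consequence of the preceding discussion, and your two steps are precisely an explicit assembly of that discussion (\cref{below2}, \cref{lemant}, Remark~\ref{remaining+}, \cref{lemant-}, \cref{lfm}, \cref{below1} and the closing paragraph on near-fold re-entries from $V_-$). Your identification of the termination of the bounce-back loop as the one substantive point, resolved by the damping argument of Remark~\ref{remaining+}, matches the paper's own reliance on that remark.
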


Since the critical manifolds change qualitatively with $x$ inside the switching layer, the existence of periodic solutions is impossible. 

\begin{theorem} \label{lasteo} System \cref{nlr} does not possess periodic solutions. \qed
\end{theorem}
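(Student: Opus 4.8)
The plan is to argue by contradiction, exploiting the \emph{ageing} of the critical manifolds $\Lambda_0^{N}$. Suppose \cref{nlr} admits a periodic solution $\gamma$, of period $T$ in $x$ (equivalently in $t$, since $\dot x=1$), and look for a contradiction with the fact that the branches of $\Lambda_0^{N}$ through which $\gamma$ must slide keep growing.

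First I would locate $\gamma$ in phase space. By \cref{totavall} the orbit is eventually contained in $V_0\cup V_-$, and since the image of a periodic orbit is a fixed set, $\gamma\subseteq V_0\cup V_-$ for all $x$. It cannot lie entirely in $V_-$: there $\psi\equiv-1$, so \cref{nlrb} reduces (with $y=\epsilon v$) to the scalar linear equation $\dot y=-ay-\sin(\omega_-\pi x)$, whose only periodic solution is the bounded part of \cref{sol1}, which has zero mean and hence takes positive values, so it is not contained in $V_-=\{v\le-1\}$; thus no periodic orbit can avoid $V_0$. Nor can $\gamma$ lie entirely in $V_0$: each branch $\Lambda^{N}_{0n}$ exists only for $x\in(\tfrac{2n}{3},2n)$, so by \cref{below1} (and, off the slow manifolds, because $|\dot v|$ is of order $1/\epsilon$) every trajectory leaves the layer in finite time. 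Hence $\gamma$ enters and leaves $V_0$ infinitely often, traversing $V_-$ in between (it never reaches $V_+$, again by \cref{totavall}).

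Next I would follow one excursion of $\gamma$ through $V_0$, taking $x$ large (legitimate since $x$ is unbounded along $\gamma$). On entering $V_0$ from $V_-$, \cref{lemant-} together with \cref{lfm} shows that $\gamma$ is captured by the slow invariant manifold lying $O(\epsilon/n)$-close to a stable critical branch $\Lambda^{N}_{0n}$ --- after, if needed, finitely many re-expulsions, which by \cref{totavall} can only return it to $V_-$, whereupon the damping estimate behind \cref{remaining+} applies. It then tracks that branch to its lower fold and, by \cref{below1}, leaves $V_0$ near $(x^-_{\epsilon,2n},-1)$, i.e. at $x$ close to $4n$, re-entering $V_-$. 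By \cref{below2}, $\bar P^{a}_{\epsilon,-}(x^-_{\epsilon,2n})\in(x^-_{\epsilon,2n+1},x^-_{\epsilon,2n+2})$, so $\gamma$ re-enters $V_0$ at an $x$-value exceeding $4n$; since $\Lambda^{N}_{0m}$ lives only on $x\in(\tfrac{2m}{3},2m)$, the next sliding branch has index $m\ge n+1$. Thus the branch index strictly increases along $\gamma$, hence tends to $+\infty$. But the $n$-th branch $\Lambda^{N}_{0n}$ spans an $x$-interval of length $\tfrac{4n}{3}$ (cf. \cref{ssmn}), and the length of each $V_0$-sojourn of $\gamma$ is bounded below by a fixed fraction of this width, so the sojourn lengths are unbounded --- contradicting periodicity, which makes the sequence of consecutive $V_0$-sojourn lengths periodic, hence bounded. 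As throughout \cref{sec:rdnl}, the argument is for $\epsilon$ sufficiently small, the smallness thresholds in \cref{lemant-} and \cref{lfm} being uniform in $n$.

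The step I expect to be the main obstacle is the globalisation: \cref{lemant-}, \cref{lfm}, \cref{below1} and \cref{below2} each describe a single passage, so the work lies in chaining them into a statement valid for \emph{every} excursion of $\gamma$ --- ruling out, in particular, the orbit lingering near $v=-1$ and oscillating between $V_-$ and $V_0$ without ever being captured (for which one needs the damping argument of \cref{remaining+}), and confirming that the branch index is genuinely monotone increasing; the uniformity in $n$ of the $\epsilon$-thresholds is precisely what makes this chaining legitimate.
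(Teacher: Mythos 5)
The paper itself treats this theorem as immediate --- the proof is just the one-sentence justification preceding the statement, that ``the critical manifolds change qualitatively with $x$ inside the switching layer, [so] the existence of periodic solutions is impossible''. Your attempt to make that precise is welcome, and its skeleton is sound: you correctly trap a hypothetical periodic orbit $\gamma$ in $V_0\cup V_-$ via \cref{totavall}, rule out confinement to either set alone, and chain \cref{lemant-}, \cref{lfm}, \cref{below1} and \cref{below2} to show that the branch index $n$ tagging each successive $V_0$-sojourn of $\gamma$ is strictly increasing.

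However, the final step is wrong. You claim each $V_0$-sojourn has length bounded below by a fixed fraction of the branch width $\tfrac{4n}{3}$, hence that the sojourn lengths are unbounded. They are not. A periodic orbit confined to $V_0\cup V_-$ only ever enters $V_0$ from $V_-$, so it is \cref{lemant-} (not \cref{lemant}) that governs each re-entry: the entry window leading to capture by $\Lambda^{N}_{0(n+1)}$ is $x\in\bigl(x^-_{\epsilon,2n+1},x^-_{2n+2}-\hat\delta\bigr)$, which sits within about two $x$-units of the lower fold at $x\approx 4(n+1)$ through which the orbit exits by \cref{below1}. Each sojourn therefore has length of order $4-x_{\epsilon,a}<2$, uniformly in $n$; the orbit only ever skims the tail end of each branch and never sees its full $\tfrac{4n}{3}$-wide extent. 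This is precisely what \cref{vr} and \cref{last} encode --- the sojourn lengths converge to a finite limit rather than diverging --- so the contradiction you propose does not materialise.

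The genuine obstruction is qualitative rather than metric. The slow invariant manifold the orbit tracks on its $n$-th pass is not a translate by $4$ of the one it tracks on its $(n+1)$-th pass: the $O(\epsilon/n)$ distance in \cref{lfm} and the $O\bigl((\epsilon^2/n)^{1/3}\bigr)$ exit offset in \cref{below1} depend explicitly on $n$, and already at the level of critical manifolds $\psi(v)=2(2n/x-1)$ and its shift $\psi(v)=2\bigl(2(n+1)/(x+4)-1\bigr)$ agree only at the isolated point $x=4n$. An exactly $4$-periodic solution would have to reproduce the same $v$-profile on $[4n,4n+4]$ for every $n$, which these $n$-dependent corrections forbid. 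That is the content of the paper's one sentence, and it is the argument that should replace your sojourn-length blow-up.
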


Despite this the system does possess solutions tending asymptotically towards periodic functions; these functions are only exact solutions for the limiting ($\eps=0$) system. More precisely we show below that, for $x \rightarrow +\infty$, all trajectories tend to a periodic object which, when $\epsilon \rightarrow 0$, tends to the periodic solution of the discontinuous system given by \cref{nlps}. 

It follows from the previous discussion that every trajectory is eventually trapped by a slow invariant manifold within the switching layer and exits into $V_-$ at exponentially close distance to $\left(\bar{x}^-_{\epsilon,2n},-1\right)$. Moreover, according to \cref{below2} and standard results on regularity of initial conditions and parameters, when $x>x^-_{\epsilon,2n+1}$ the trajectory finds the vector field in $V_-$ pointing back towards $V_0$, so it is mapped again onto the switching layer at exponentially close distance from the entry point of the trajectory with initial condition $\left(\bar{x}^-_{\epsilon,2n},-1\right)$, namely, 
\begin{align}
 x_e:=\bar{x}^-_{\epsilon,2n}+x_{\epsilon,a}-O\left(\left(\epsilon^2/n\right)^\frac{1}{3}\right) \in \left(x^-_{\epsilon,2n+1},x^-_{\epsilon,2n+2}\right),
\end{align} 
where, according to \cref{foldr},
\begin{align}
x_{\epsilon,a}=x(\epsilon,a) \in \left(2-2\arcsin (a \epsilon),4\right).
\end{align}

If the entry point on $V_0$ belongs to the interval defined in \cref{lemant-} then the trajectory is trapped by the next attractive slow  invariant manifold and leaves $V_0$ at exponentially close distance of  $\left(\bar{x}^-_{\epsilon,2n+2},-1\right)$, alternatively it can be either trapped by such manifold and leave $V_0$ as described or be expelled onto $V_-$ through $\left(x,-1\right)$, with $x \in \left(x^-_{\epsilon,2n+2},\bar{x}^-_{\epsilon,2n+2}\right)$. After this, the process is repeated iteratively. 

Notice also that the value of $\psi(v)$ at $x_e$  for the   $2n+2$ critical manifold is such that
\begin{align}&  \left. \psi \left(v\right) \right|_{x=x_e} =2\left(\frac{2(n+1)}{4n+\arcsin (a\epsilon)-O\left(\left(\epsilon^2/n\right)^\frac{1}{3}\right)+x_{\epsilon,a}}-1\right) \xrightarrow[n \rightarrow +\infty]{}  -1 \nonumber\\ & \Rightarrow \quad \left. v \right|_{x=x_e} \rightarrow -1.
\end{align}
As the corresponding exponentially attractive slow invariant manifold lies at $O\left(\epsilon/n\right)$ distance from the critical manifold it is immediate that, for $n \rightarrow +\infty$, the piece of trajectory going from an entry point on $V_0$ until the next exit point tends to $-1$.   

Notice, however, that the trajectory with initial condition $\left(\bar{x}^-_{\epsilon,2n},-1\right)$ is lower bounded by that with initial condition $\left(x^-_{\epsilon,2n},-1\right)$, the mutual distance being of $O\left(\left(\epsilon^2/n\right)^\frac{1}{3}\right)$ because of standard regularity results. It is immediate that the entry point of such trajectory on $V_0$ is by $\left(x^-_{\epsilon,2n}+x_{\epsilon,a},-1\right)$, and an analogous analysis of $\psi(v)$ reveals that, for $n \rightarrow +\infty$, 
\begin{align}
 \left. v \right|_{x=x^-_{\epsilon,2n}+x_{\epsilon,a}} \rightarrow -1 
\end{align}
as well, and so happens with the piece of trajectory in $\left(x^-_{\epsilon,2n}+x_{\epsilon,a},x^-_{\epsilon,2n+2}\right)$.

\begin{definition} \label{vr} Let $v_r(x)$ be the $4$-periodic function defined as follows 
\begin{align} 
& v_r(x)=v_-\left(x,x^-_{\epsilon,2n}\right), \ \ x \in \left[x^-_{\epsilon,2n},x^-_{\epsilon,2n}+x_{\epsilon,a}\right], \\ 
& v_r(x)=-1, \ \ x \in \left(x^-_{\epsilon,2n}+x_{\epsilon,a},x^-_{\epsilon,2n+2}\right),
\end{align}
with $v_-\left(x,x^-_{\epsilon,2n}\right)$ denoting the solution of \cref{nlr} with initial condition $\left(x^-_{\epsilon,2n},-1\right)$ introduced in \cref{below2}.
\end{definition}



\begin{theorem} \label{last} As $x \rightarrow +\infty$, all trajectories of system \cref{nlr} tend to the 4-periodic function $v_r(x)$ of \cref{vr}. 
\qed
\end{theorem}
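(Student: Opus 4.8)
The plan is to assemble the iterative picture built up above into a single statement about the limit $x\to+\infty$. First I would invoke \cref{totavall}: every trajectory of \cref{nlr} eventually lies in $V_0\cup V_-$, so it suffices to treat a trajectory confined to $V_0\cup V_-$ for all $x\geq x_T$. For such a trajectory, \cref{below2}, \cref{lemant}, \cref{lemant-}, \cref{lfm} and \cref{below1}, together with \cref{remaining+} and \cref{ufp}, organise it --- for every sufficiently large $n$ --- into a pair of consecutive arcs: an excursion in $V_-$ over an $x$-interval of length $x_{\epsilon,a}\in(2-2\arcsin(a\epsilon),4)$ beginning at a point $(x_e,-1)$ with $x_e=x^-_{\epsilon,2n}+O((\epsilon^2/n)^{1/3})$ (\cref{below1}), followed by an arc in $V_0$ that is exponentially attracted to the stable slow invariant manifold of $\Lambda^{N}_{0,n+1}$ and then exits $V_0$ exponentially close to $(\bar x^-_{\epsilon,2(n+1)},-1)$, where $\bar x^-_{\epsilon,2(n+1)}=x^-_{\epsilon,2(n+1)}+O((\epsilon^2/n)^{1/3})$. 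Because $\dot x=1$, each arc pair advances $x$ by $x^-_{\epsilon,2(n+1)}-x^-_{\epsilon,2n}+O((\epsilon^2/n)^{1/3})=4+O((\epsilon^2/n)^{1/3})$, so $x\to+\infty$ forces $n\to+\infty$; and since every $V_-$ excursion has $x$-length $<4$ it re-enters $V_0$ before the next lower fold and is caught by the next attracting slow manifold, so the damping argument of \cref{remaining+} (and its $V_-$ analogue below \cref{lemant-}) only delays, never prevents, the onset of this pattern.

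Next I would identify the $n\to\infty$ limit of these arcs with the $4$-periodic function $v_r$ of \cref{vr}. On the $V_-$-arc the trajectory solves the restriction of \cref{nlr} to $V_-$ --- a smooth ODE that is $4$-periodic in $x$, which is exactly what makes the pieces in \cref{vr} glue into one $4$-periodic function --- from the point $(x_e,-1)$, which differs from $(x^-_{\epsilon,2n},-1)$ only by $O((\epsilon^2/n)^{1/3})$ in the $x$-coordinate. Since this arc stays in a fixed compact region (one may take $\{|v|\le (a\epsilon)^{-1}\}$, as $\dot v$ and $-v$ share their sign for $|v|>(a\epsilon)^{-1}$) and lasts a bounded $x$-interval, a Gronwall estimate with constants depending only on $a,\epsilon$ shows it converges uniformly, as $n\to\infty$, to the solution with data $(x^-_{\epsilon,2n},-1)$, i.e. to the piece $v_-(\cdot,x^-_{\epsilon,2n})$ of $v_r$. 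On the $V_0$-arc the manifold $\Lambda^{N}_{0,n+1}$ satisfies $\psi(v)=2\bigl(2(n+1)/x-1\bigr)$ with $x\in[x_e,\bar x^-_{\epsilon,2(n+1)}]$, an interval both of whose endpoints equal $4(n+1)+\Theta(1)$, so $2(n+1)/x\to\tfrac12$ and hence $\psi(v)\to-1$, $v\to-1$ uniformly in $x$ (recall \cref{psi}); combining this with \cref{lfm} ($O(\epsilon/n)$-closeness to the slow manifold away from the folds), \cref{below1} and \cref{ufp} ($O((\epsilon^2/n)^{1/3})$-closeness near the folds) and the exponential attraction, the arc lies within $o(1)$ of the constant $-1$, which is exactly $v_r$ on that part. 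Finally, the $x$-endpoints of the actual arcs differ from those of the two pieces of $v_r$ by $O((\epsilon^2/n)^{1/3})$; on these shrinking strips $v$ and $v_r$ are both within $o(1)$ of $-1$, so adding the two arc estimates gives $\sup\{|v(x)-v_r(x)|:x\text{ in the }n\text{th period}\}\to0$ as $n\to\infty$, i.e. as $x\to+\infty$.

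The main obstacle is the bookkeeping at the fold points. One must track how the $O((\epsilon^2/n)^{1/3})$ Riccati-type corrections of \cref{below1}/\cref{ufp} feed into the initial conditions of the following $V_-$ excursion, and check that the Gronwall constant for that excursion does not blow up as $n\to\infty$ --- which is exactly why confinement to a fixed compact region (a consequence of $a>0$ with $\epsilon$ held fixed) is essential, and why one must not let $\epsilon\to0$ until this uniform-in-$n$ estimate is secured. A secondary point is to confirm that entries into $V_0$ landing in the ``not-immediately-trapped'' windows of \cref{remaining+} or \cref{lemant-} are always resolved after finitely many further periods, with the trajectory still riding a late (hence near-$(-1)$) attracting slow manifold throughout, so that these transients do not affect the $x\to+\infty$ asymptotics.
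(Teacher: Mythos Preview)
Your proposal is correct and follows essentially the same route as the paper, which in fact gives no separate proof (the theorem is marked \qed and rests on the discussion in the preceding paragraphs). You make explicit the Gronwall and compactness estimates that the paper subsumes under ``standard results on regularity of initial conditions and parameters,'' but the structure---iterating through $V_-$/$V_0$ arcs, using \cref{below1} and \cref{lfm} to pin down the $V_0$ arc near $v=-1$ as $n\to\infty$, and using continuous dependence on initial data for the $V_-$ arc---is identical.
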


\begin{remark} \label{rempo}  Notice that, for $\epsilon \rightarrow 0$, 
\begin{align}
 x^-_{\epsilon,2n},\bar{x}^-_{\epsilon,2n} \rightarrow x^-_{2n}=4n \ \ \mbox{and} \ \ x_{\epsilon,a}\in \left(2-2\arcsin (a\epsilon),4\right) \rightarrow (2,4).
\end{align} Hence, the function $y_r(x)$ obtained by reversing the change of variables $v=\frac{y}{\epsilon}$ is such that $y_r \rightarrow y_d$ for $\epsilon \rightarrow 0$, $y_d$ being the sliding 4-periodic solution of the discontinuous nonlinear system introduced in \cref{nlps}.
\end{remark}

\begin{figure}[ht!]\centering
\includegraphics[width=0.8\textwidth]{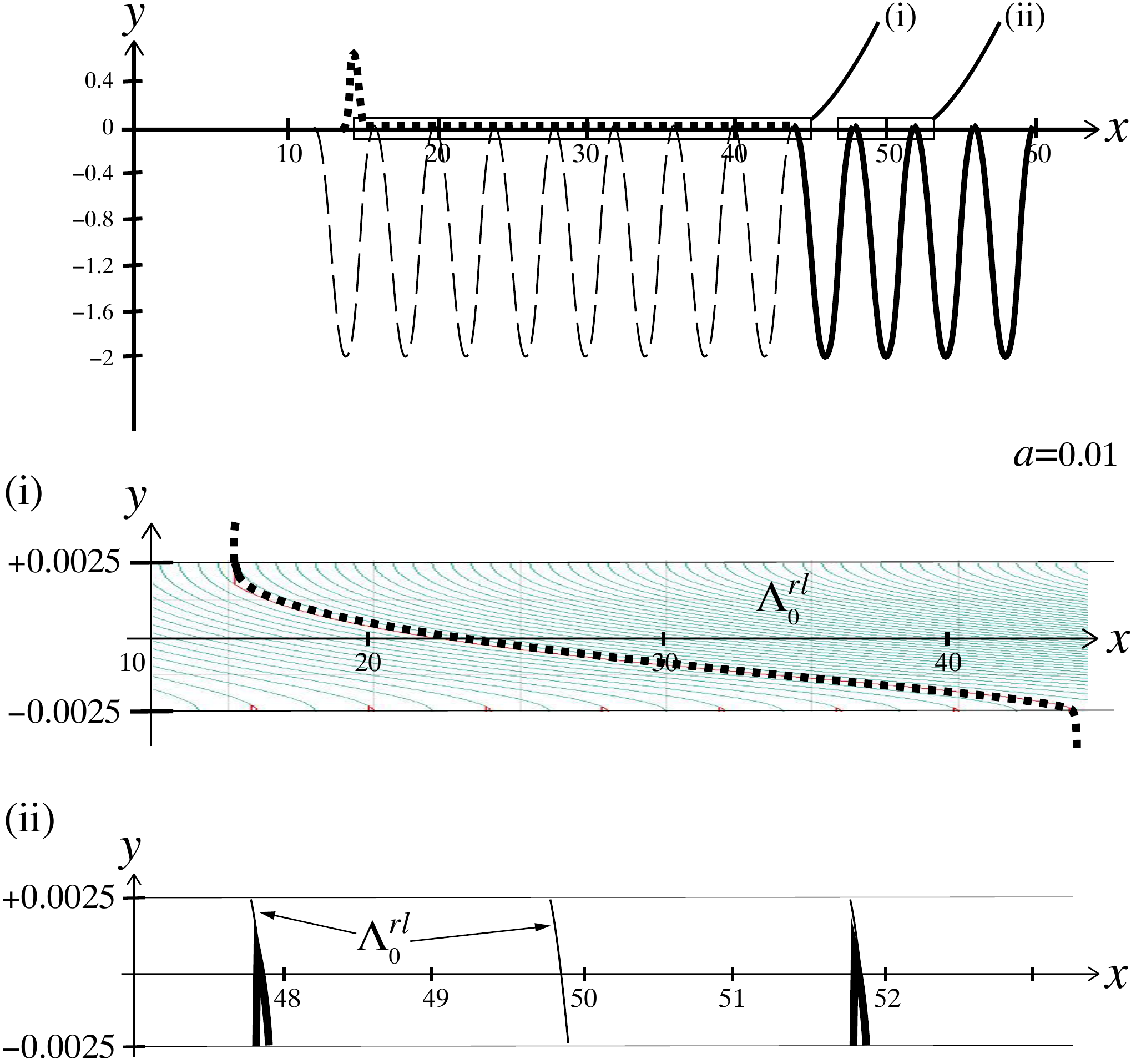}
\caption{Nonlinear regularization, showing the period 4 sliding orbit (full curve), and two trajectories (dotted and dashed) that collapse onto it. Simulated from \cref{nlr} with $a=0.01$ and $\epsilon=0.0025$. Two magnifications of the switching layer are shown: (i) the critical manifolds $\Lambda_0^{L}$ and the dotted trajectory evolving close to one branch, (ii) the small sliding segment of the period 4 sliding orbit. The periodic orbit pass through $\left(x_0,v_0\right)=\left(14.1,1.1\right)$.}\label{fig:f2_z}
\end{figure} 

In \cref{fig:f2_z} we simulate a pair of solutions of \cref{nlr} with $a=0.01$ and $\epsilon=0.0025$, starting at $\left(x_0,v_0\right)=\left(14.1,1.1\right)$ and $\left(x_0,v_0\right)=\left(12.1,-1.1\right)$, both of which collapse toward the period 4 solution. The former does so via a long segment of sliding along a slow invariant manifold, close to the critical manifold $\Lambda_0^{N}$ branch with $n=22$, exiting the layer at $x_e \approx 2\cdot 22=44$ after sliding for a distance $\Delta x=29.9$, more than 7 of the periods portrayed in \cref{fig:sign}, during which it passes through crossing, repelling, and attractive sliding regions ($S_{0C}$, $S_{0R}$, and $S_{0A}$). The latter solution lies close throughout to the periodic solution, but remains in $S_-$ and also only starts sliding after $x\approx44$.  The behaviours described in \cref{totavall} and \cref{last} are thus illustrated.

\section{Conclusions} \label{sec:conc}

In \cref{sec:idl} we showed that the linear switching system has a period 4 orbit, which is a crossing orbit for $a<a_l\ll1$ and involves sliding for $a>a_h\gg1$. We expect that this orbit exists for all $a$ (i.e. also for $a\ge a_l$ but not large), presumably undergoing a grazing-sliding bifurcation at some value in the range $a_l<a<a_h$. In the nonlinear system we showed that there exists a period 4 sliding solution for all $a$. The forms of the periodic orbits in the linear and nonlinear systems bare little relation to each other. When the discontinuity was regularized the periodicity of the linear system persisted, while the nonlinear system tended asymptotically towards a periodic orbit that was no longer strictly a solution. 

In the context of the resistor-inductor circuit in \cref{fig:RL} these results manifest as the current dynamics in cases where the controller can be modelled as switching linearly or non-linearly between frequency states. In the linear case the circuit finds a stable oscillation, in the nonlinear case it ages until it becomes constrained to lower frequency state and the switching threshold. Experiments to observe such behaviour would certainly be of future interest, in electronic controllers or in other possible applications, with the focus being on distinguishing the linear-versus-nonlinear models of switching and the phenomenon of ageing. 

Looking forward towards study of the original motivating system \cref{3sys}, we expect that the distinctions between linear and nonlinear switching arise very similarly to the relatively simple dynamics of the first order oscillator studied here. The added complexity of the second order oscillator lies mainly in the added dimension creating much richer sliding dynamics: whereas in the first order oscillator the sliding manifolds act merely as a barrier over long times, in the second order oscillator the ageing sliding manifolds bring to life a sequence of changing behaviours. Our preliminary investigations show first that lengthier segments of sliding are seen, similar to the first order oscillator here, followed later (for larger $t$) by relaxation oscillations and mixed-mode oscillations. These remain to be studied in detail, but with the first order system we have singled out the most important element that the example in \cite{j15hidden} was conceived to show, namely the importance of linear versus nonlinear terms in models of switching, the different dynamics they create, and their persistence under regularization. 


\section*{Acknowledgments}

J.M.O. was partially supported by the Government of Spain through the \emph{Agencia Estatal de Investigaci\'on} Project DPI2017-85404-P and by the Generalitat de Catalunya through the \emph{AGAUR} Project 2017 SGR 872. 
C.B. was partially supported by the Spanish government grant PGC2018-098676-B-I00.
P.M. was partially supported by the Spanish government grant PGC2018-100928-B-I00.
C.B. and P.M. were partially supported by the Catalan government grant 2017-SGR-1049. 

\bibliography{grazcat}
\bibliographystyle{plain}

\end{document}